\documentclass[12pt]{amsart}
\usepackage[obeyspaces,hyphens,spaces]{url}
\oddsidemargin=0in
\evensidemargin=0in
\textwidth=6.50in             

\headheight=10pt
\headsep=10pt
\topmargin=.5in
\textheight=8in
\usepackage{amsmath, amssymb, amsthm}
\usepackage{comment, empheq}
\usepackage{mathdots, breqn, xcolor}
\usepackage[colorlinks]{hyperref}
\usepackage{mathtools}
\usepackage[capitalize,nameinlink,noabbrev]{cleveref}
\usepackage{dsfont}
\numberwithin{equation}{section}
\newtheorem{theorem}{Theorem}
\newtheorem{lemma}{Lemma}[section]
\newtheorem{prop}[lemma]{Proposition}

\newtheorem{cor}[lemma]{Corollary}

\theoremstyle{definition}
\newtheorem{defn}[lemma]{Definition}
\newtheorem{remark}[lemma]{Remark}

\crefname{subsection}{Subsection}{Subsections}

\newcommand{\R}{\mathbb{R}}
\newcommand{\bbA}{\mathbb{A}}
\newcommand{\calA}{\mathcal{A}}

\newcommand{\C}{\mathbb{C}}
\newcommand{\Q}{\mathbb{Q}}
\newcommand{\Z}{\mathbb{Z}}

\newcommand{\N}{\mathbb{N}}

\newcommand{\B}{\mathcal{B}}

\renewcommand{\a}{\mathfrak{a}}

\newcommand{\frako}{\mathfrak{o}}
\newcommand{\frakq}{\mathfrak{q}}

\newcommand{\GL}{\mathrm{GL}}

\newcommand{\SL}{\mathrm{SL}}

\newcommand{\SO}{\mathrm{SO}}

\newcommand{\op}{\mathrm{op}}

\newcommand{\Eis}{\mathrm{Eis}}
 
\newcommand{\Ind}{\mathrm{Ind}}

\newcommand{\tr}{\mathrm{tr}}
\newcommand{\ind}{\mathrm{Ind}}
\newcommand{\supp}{\mathrm{supp}}

\newcommand{\level}{\mathrm{level}}
\newcommand{\triv}{\mathrm{triv}}

\newcommand{\calP}{\mathcal{P}}

\newcommand{\calB}{\mathcal{B}}
\newcommand{\calS}{\mathcal{S}}
\newcommand{\calL}{\mathcal{L}}

\newcommand{\calM}{\mathcal{M}}

\newcommand{\frakB}{\mathfrak{B}}

\newcommand{\Hom}{\mathrm{Hom}}

\newcommand{\disc}{\mathrm{disc}}
\newcommand{\One}{\mathds{1}}
\newcommand{\vol}{\mathrm{vol}}
\renewcommand{\d}{\,\mathrm{d}}

\usepackage{svg}
\usepackage[color,notref,notcite,final]{showkeys}
\renewcommand*{\showkeyslabelformat}[1]{%
   \fbox{\vbox{\hsize=1.1cm\normalfont\small\url{#1}\par}}}

\title{On the local $L^2$-Bound of the Eisenstein Series}

\author{Subhajit Jana}
\address{Queen Mary University of London, Mile End Road, London E14 NS, UK.}
\email{s.jana@qmul.ac.uk}

\author{Amitay Kamber}
\address{Centre for Mathematical Sciences, Wilberforce Road, Cambridge CB3 0WB, UK.}
\email{ak2356@dpmms.cam.ac.uk}

\begin{document}

\begin{abstract}
We study the growth of the local $L^2$-norms of the unitary Eisenstein series for reductive groups over number fields, in terms of their parameters. We derive a \emph{poly-logarithmic} bound on an average, for a large class of reductive groups. The method is based on Arthur's development of the spectral side of the trace formula, and ideas of Finis, Lapid, and M\"uller.

As applications of our method, we prove the optimal lifting property for $\mathrm{SL}_n(\mathbb{Z}/q\mathbb{Z})$ for square-free $q$, as well as the Sarnak--Xue \cite{sarnak1991bounds} counting property for the principal congruence subgroup of $\mathrm{SL}_n(\mathbb{Z})$ of square-free level. This makes the recent results of Assing--Blomer \cite{assing2022density} unconditional.
\end{abstract}

\maketitle

\section{Introduction}
Eisenstein series are ubiquitous in the theory of automorphic forms, in particular, they are crucially used by Langlands, and later by Arthur, in their seminal works on the development of the spectral decomposition of the space of automorphic forms of a reductive group and the trace formula;
see \cite{langlands1966eisenstein,langlands1989eisenstein,arthur2005intro,lapid2022perspective}. Many important questions in number theory and automorphic forms rely directly upon certain analytic properties of the Eisenstein series, e.g. their complex analytic properties including the location and order of their poles, and their various growth properties near the cusps.

The Eisenstein series, in contrast with \emph{cusp forms}, do not decay near the cusps; in fact, they have polynomial growth. The rate of growth depends precisely on the constant terms of the Eisenstein series. In particular, the unitary Eisenstein series (barely!) fail to be square-integrable on the non-compact fundamental domain. However, it is quite natural to wonder how the $L^2$-norms of unitary Eisenstein series grow \emph{locally} (i.e., on a fixed compact subset of the fundamental domain), in terms of their archimedean and non-archimedean parameters. 

In this paper, we study the general local $L^2$-bound of a unitary Eisenstein series on a reductive group over a number field. We refer to \cite[\S5.3]{lapid2022perspective} where such questions are considered and the relations with other important automorphic and analytic questions, e.g.\ location and order of the poles of the Eisenstein series and classification of the residual spectrum, are discussed. Informally, we prove \emph{sharp} upper bounds for the averages of the squared local $L^2$-norms of the Eisenstein series over certain \emph{short} families, for a large class of reductive algebraic groups; see \cref{thm:unramified} and \cref{thm:BCF}. We refer to \cref{local-l2-bound} for precise and detailed discussions of the problems and our results.

Our estimates of the local $L^2$-norms of the Eisenstein series are potentially useful to answer many interesting number-theoretic questions, in particular, the problems that are approached via the spectral theory of various \emph{non-compact} arithmetic symmetric spaces. Such approaches often face serious analytic difficulties due to the existence of the continuous spectrum and require estimates for various local $L^p$-norms of the Eisenstein series. Our estimates and the methods to prove them seem to be utilizable to overcome the analytic hurdles that arise via the local $L^2$-norms of the Eisenstein series, at least in certain cases. 

As a \emph{proof-of-concept}, we prove the \emph{optimal lifting property} for $\mathrm{SL}_n(\mathbb{Z}/q\mathbb{Z})$ in \cref{thm:optimal-lifting} for square-free $q$, as well as the \emph{Sarnak--Xue counting property} in \cref{thm:counting}, for the principal congruence subgroups of $\SL_n(\Z)$ of square-free level. Assing--Blomer recently proved those theorems \emph{conditional} on the local $L^2$-bounds of the Eisenstein series (see \cite{assing2022density}). Thus, we make their results unconditional. We refer to \cref{subsec:applications} for a detailed discussion. In a sequel \cite{jana2024optimal}, we prove \emph{optimal Diophantine exponent} for $\SL_n(\Z[1/p])$, as expected in a series of influential works by Ghosh--Gorodnik--Nevo (e.g. see \cite{ghosh2018best}), assuming \emph{Sarnak's density hypothesis}. The proof of the main results in \cite{jana2024optimal} crucially depends on (a slightly weaker version of) \cref{thm:gln-bcf} below; see \cite[\S 4.8]{jana2024optimal}. We refer the reader to \cite{jana2024optimal} for a detailed discussion of this problem and our proof.

\vspace{0.3cm}

Let us now discuss the motivating questions and our results.

\subsection{Local \texorpdfstring{$L^2$}{L2}-bound}\label{local-l2-bound}
Let $G$ be a reductive algebraic group over a number field $F$. Let $\bbA$ be the adele ring of $F$. Also let $\bbA_f$ be the finite adeles and $F_\infty := F\otimes\R$. Given a Levi subgroup $M$ of $G$, let $\a_M^*$ be the $\R$-vector space spanned by $F$-rational characters of $M$, and $(\a_M^G)^*$ the $\R$-vector space spanned by characters that are trivial on $G$. Let $H_M\colon M(\bbA) \to \a_M$ the natural map, and $M(\bbA)^1$ the kernel of $H_M$. We denote by $M_0$ a minimal Levi subgroup.

Let $P$ be a parabolic subgroup of $G$ with Levi subgroup $M$. We denote $\a_P = \a_M$. Let $\varphi$ be in $\calA^2(P)$ which is the pre-Hilbert space of square-integrable automorphic forms on $N(\bbA)M(F)\backslash G(\bbA)$. Let $\lambda$ be an element of $(\a_{M,\C}^G)^* := (\a_{M}^G)^*\otimes_\R \C$.
Following Langlands in \cite{langlands1989eisenstein} (also see \cite[Chapter 7]{arthur2005intro}) we construct an Eisenstein series $\Eis(\varphi,\lambda)$ attached to $\varphi$ and $\lambda$; see the details of the construction in \cref{sec:eisenstein-series}.

Let $\varphi$ be a \emph{unit} vector (see \cref{sec:automorphic-forms} for the definition of the norm on $\calA^2(P)$) with $\nu(\varphi)$ being a measurement of the complexity of $\varphi$. We will later use a more nuanced notion of complexity (both archimedean and non-archimedean); see \cref{sec:complexity} for precise definitions. Finally, let $\lambda \in i(\a_M^G)^*$ be purely imaginary, so that $\Eis(\varphi,\lambda)$ appears in the spectral decomposition of $L^2(G(F)\backslash G(\bbA)^1)$.  Here and elsewhere in the paper $\d\lambda$ will denote the real-valued Lebesgue measure on $i(\a_M^G)^*$ as in e.g.\ \cite{arthur2005intro}.

We ask the following two questions with (presumably, strictly) decreasing levels of difficulty. Let $\Omega$ be a fixed compact set in a fundamental domain of $G(F)\backslash G(\bbA)^1$. How do
\begin{equation}\label{pointwise-bound}
    \intop_\Omega|\Eis(\varphi,\lambda)(g)|^2\d g
\end{equation}
and
\begin{equation}\label{average-bound}
    \intop_{\|\lambda'-\lambda\|\le 1}\intop_\Omega|\Eis(\varphi,\lambda')(g)|^2\d g\d\lambda'
\end{equation}
grow as $\nu(\varphi)+\|\lambda\|\to\infty$?

\vspace{0.3cm}

Here is our general expectation regarding the estimates of the above integrals.
\begin{equation}\label{eq:expectation}
    \text{The integrals }\eqref{pointwise-bound}\text{ and }\eqref{average-bound} \ll_{\Omega} \left(\log(1+\nu(\varphi)+\|\lambda\|)\right)^{\dim\a_M^G},
\end{equation}
as $\nu(\varphi)+\|\lambda\|\to\infty$.

\vspace{0.3cm}

Let $\varphi$ be spherical at all places and $\nu(\varphi)$ denote the size of the Laplace eigenvalue of $\varphi$. If $G=\GL_2$ and $\Eis(\varphi,\lambda)$ is the non-holomorphic spherical Eisenstein series then it is a classical result that \eqref{pointwise-bound} indeed satisfies \eqref{eq:expectation}; see \cref{sec:difficulties}.
For $G=\GL_3$, Miller in \cite{miller2001existence} proved that \eqref{average-bound} satisfies \eqref{eq:expectation}; see \cref{sec:difficulties} for more discussions. For $G=\GL_n$, Zhang in \cite{zhang2019quantum} showed that if $\Eis(\varphi,\lambda)$ is the maximal parabolic \emph{degenerate} Eisenstein series, i.e.\ induced from the trivial representation of the Levi subgroup $\GL_{n-1}\times\GL_1$, then \eqref{pointwise-bound} satisfies \eqref{eq:expectation}\footnote{In fact, he proved an asymptotic formula in $\lambda$ in this case.}. As far as we know, these are the only examples where a poly-logarithmic strength bound is known for all the Eisenstein series for either \eqref{pointwise-bound} or \eqref{average-bound}, at least for higher rank groups.

\begin{remark}
In this remark we comment on a few topics in the existing literature that have similar flavour as the local $L^2$-bound question in this paper. One may wonder about the size of the analogous local (and global, for cuspidal forms) $L^p$-bound of automorphic forms for any $1\le p\le \infty$. For compact manifolds, general polynomial bounds in $\nu(\varphi)$ were obtained by Sogge \cite{sogge1988lp-norm} and there have been many improvements in various aspects. On arithmetic locally symmetric spaces local $L^\infty$-norms for cuspidal automorphic forms have been studied quite a lot, in particular, quite extensively for small rank groups. We refer to \cite{blomer2016supnorm-pgln} for the case of $\GL_n(\R)$, to \cite{marshall2014upper} for general semisimple groups and the references therein. The proofs there are via the pre-trace formula which can also be made to work to prove similar bounds for the local $L^\infty$-bounds of the short averages of the square of the Eisenstein series in case of non-compact spaces. We also remark that local $L^\infty$-bound of general Eisenstein series may not satisfy \eqref{eq:expectation}; see, e.g., \cite[Theorem 2]{blomer2020epstein}. On the other hand, the local $L^\infty$ bounds of the cusp forms (similarly, of an average of Eisenstein series) on $\GL_n$ as in \cite{blomer2016supnorm-pgln} often do not hold globally (i.e.\ uniformly near the cusp); see \cite{brumley2020large}.

\end{remark}

\subsection{Main results}

In this section we state our main results. Informally, we prove that, for a large class of groups, \eqref{average-bound} indeed can be estimated \emph{poly-logarithmically} in $\nu(\varphi)+\|\lambda\|$; as expected in \eqref{eq:expectation}. Before stating the most general result in our paper, we first give the \emph{state-of-the-art} result for $\GL_n$ that is possible to state in classical language and without detailed notations.

\begin{theorem}\label{thm:gln-bcf}
Let $G= \GL_n$ and $P$ be a parabolic subgroup of $G$. Let $\varphi_0\in\calA^2(P)$ be a unit that is spherical at all places. Moreover, suppose that $\varphi_0$ is an eigenfunction of the full Hecke algebra at the non-archimedean places and the ring of invariant differential operators in the center of the universal enveloping algebra at the archimedean places. Then for any $\lambda_0\in i(\a_P^G)^*$ we have
\[
\intop_{\substack{{\lambda\in i(\a_P^G)^*} \\ \|\lambda-\lambda_0\|\le 1}}\intop_\Omega|\Eis(\varphi_0,\lambda)(x)|^2\d x\d \lambda \ll_\Omega \left(\log(1+\nu(\varphi_0)+\|\lambda_0\|)\right)^{n-1},
\]
where $\nu(\varphi_0)$ is the size of the Laplace eigenvalue of $\varphi_0$.
\end{theorem}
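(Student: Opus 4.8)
The plan is to reduce the bound on the short $\lambda$-average of the local $L^2$-norm of $\Eis(\varphi_0,\lambda)$ to a counting problem on the group, exploiting the fact that $\Eis(\varphi_0,\lambda)$ restricted to a fixed compact set $\Omega$ can be compared, via Langlands' functional equation and the Maass--Selberg relations, to a truncated inner product that is essentially governed by the intertwining operators $M(w,\lambda)$ and the constant term of $\varphi_0$. First I would use the spectral expansion / the Bessel-type pointwise description of $\Eis(\varphi_0,\lambda)(x)$ near the cusp, together with the standard fact that on a compact set $\Omega$ the Eisenstein series is controlled by a finite sum of Whittaker/degenerate Whittaker coefficients whose size is dictated by the completed $L$-functions appearing in the constant term of $\varphi_0$ (for $\GL(n)$ these are ratios of Rankin--Selberg and standard $L$-functions). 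Integrating $|\Eis(\varphi_0,\lambda)(x)|^2$ over $\Omega$ and then over the unit ball $\|\lambda-\lambda_0\|\le 1$ in $i(\a_P^G)^*$, and opening the square, the off-diagonal oscillation in $\lambda$ kills all but a logarithmic-width window, and what remains is a weighted count of lattice points (or Hecke-operator matrix coefficients, since $\varphi_0$ is a Hecke eigenfunction) of bounded size, which one bounds by $(\log(1+\nu(\varphi_0)+\|\lambda_0\|))^{\dim\a_P^G}$.

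\textbf{Key steps, in order.} (1) Reduce to the generic/cuspidal-data case: decompose $\varphi_0$ according to the cuspidal support of $P$ and handle each piece, so that the constant term of $\Eis(\varphi_0,\lambda)$ is an explicit sum over the Weyl group of intertwining operators times characters $e^{\langle w\lambda + \rho, H(\cdot)\rangle}$. (2) On the fixed compact set $\Omega$, bound $|\Eis(\varphi_0,\lambda)(x)|$ by its ``main terms'': express it through the Fourier expansion along $N$, and use that only finitely many terms (depending on $\Omega$) contribute non-negligibly, each carrying a factor that is polynomially bounded in the analytic conductor of the relevant $L$-value. (3) Apply the Maass--Selberg relations (or directly a regularized Plancherel/Bessel-distribution identity) to rewrite $\int_\Omega|\Eis(\varphi_0,\lambda)(x)|^2\,dx$ as a truncated inner product which, after the $\lambda$-average, becomes a sum of the form $\sum_{\gamma} c(\gamma,\lambda_0)\,\langle \pi(\gamma)\varphi_0,\varphi_0\rangle$ over group elements $\gamma$ in a fixed compact region, with weights $c$ controlled by the $\lambda$-integral. (4) Carry out the $\lambda$-integral: the kernel $\int_{\|\lambda-\lambda_0\|\le1} e^{\langle w\lambda - w'\lambda, H\rangle}\,d\lambda$ decays rapidly once $\|H\|$ exceeds a constant, so the effective range of $\gamma$ is of size $\asymp \log(1+\nu(\varphi_0)+\|\lambda_0\|)$ in each of the $\dim\a_P^G$ relevant directions. (5) Bound the resulting sum using the Hecke relations: since $\varphi_0$ is a Hecke eigenfunction, $|\langle\pi(\gamma)\varphi_0,\varphi_0\rangle|$ is controlled by Hecke eigenvalues, which by the trivial (Kim--Sarnak or even convexity) bound contribute at most polynomially per element, while the number of relevant $\gamma$ is $\ll (\log(1+\nu(\varphi_0)+\|\lambda_0\|))^{\dim\a_P^G}$; combining yields the claimed estimate. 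Throughout one uses that the archimedean parameter $\|\lambda\|$ and the Laplace eigenvalue $\nu(\varphi_0)$ enter only through logarithms of the analytic conductor, which is the source of the poly-logarithmic (rather than polynomial) saving.

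\textbf{Main obstacle.} The hard part will be step (3)--(4): controlling the truncation/Maass--Selberg error terms uniformly in $\lambda$ near $\lambda_0$, especially when $w\lambda_0$ approaches a wall where the intertwining operators $M(w,\lambda)$ have poles or the constant term degenerates, so that the naive ``main term'' estimate breaks down; one must show the polar contributions are either absorbed into the residual spectrum (and hence excluded for $\lambda\in i(\a_P^G)^*$ purely imaginary) or are themselves poly-logarithmically bounded after the average. A secondary difficulty is making the dependence on $\Omega$ explicit but harmless: one needs the constant in the Fourier truncation to depend only on $\Omega$ and not on the parameters, which requires uniform control of the Whittaker functions of $\varphi_0$ in the relevant range, i.e.\ an archimedean analogue of the $L$-function bound. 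I would expect to isolate these two points as separate lemmas before assembling the final estimate.
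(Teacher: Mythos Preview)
Your proposal has a genuine gap: the mechanism that produces the poly-logarithm is misidentified, and steps (3)--(5) conflate two unrelated expansions.

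The paper does \emph{not} proceed via Fourier/Whittaker truncation or a lattice-point count. It deduces \cref{thm:gln-bcf} from \cref{cor:BCF} (hence from \cref{thm:BCF}), whose proof uses Arthur's spectral expansion of $J_\chi^T(h)$ for a carefully chosen bi-$K_\infty K$-invariant test function, together with the Finis--Lapid--M\"uller formula (\cref{prop:FLM formula}) writing $\calM_L^T(P,\lambda)$ as a polynomial in $T$ with coefficients built from \emph{first} derivatives of rank-one intertwiners. The poly-log then comes from properties (TWN+) and (BD): the short integral $\int_{t_0}^{t_0+1}|n_\alpha'(\pi,it)/n_\alpha(\pi,it)|\,dt$ and its normalized analogue are each $\ll \log(\text{conductor})$, which is at bottom a zero-free-region statement for the Rankin--Selberg $L$-functions entering $n_\alpha$. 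The final reduction to \cref{thm:gln-bcf} uses M\oe glin--Waldspurger's classification of the residual spectrum of $\GL(n)$ and Bernstein's uniform admissibility to show $F(\chi;\triv,K)\ll_{n,K}1$.

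Your step (3) asserts that Maass--Selberg rewrites $\int_\Omega|\Eis|^2$ as $\sum_\gamma c(\gamma,\lambda_0)\langle\pi(\gamma)\varphi_0,\varphi_0\rangle$ over $\gamma$ in a fixed compact region. That is not what Maass--Selberg gives: it produces a \emph{finite} sum over Weyl elements of terms $e^{\langle w\lambda,Y_Q(T)\rangle}\langle M_{Q|P}(\lambda)\varphi,M_{Q|P}(\lambda)\varphi'\rangle/\theta_Q(\Lambda)$, whose analysis is purely about intertwining operators and has nothing to do with a $\gamma$-sum. If instead you intended a geometric kernel expansion $\sum_{\gamma}h(x^{-1}\gamma y)$, then the number of $\gamma$ in a fixed compact set is an absolute constant and cannot produce a factor $(\log(1+\nu+\|\lambda_0\|))^{\dim\a_P^G}$; your step (5) claim about ``number of relevant $\gamma$'' is simply false. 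Moreover, the paper explicitly lists in \S\ref{sec:difficulties} why a direct Maass--Selberg approach is obstructed in higher rank (combinatorics of the $\Lambda\to0$ limit; need for high-derivative bounds on intertwiners; only approximate Maass--Selberg available for non-cuspidal $\varphi$). Your sketch does not address any of these, nor does it invoke the ingredient that actually carries the logarithm, namely (TWN+). Step (2) on Whittaker truncation with finitely many terms depending only on $\Omega$ is also not a valid reduction here: the Fourier coefficients of $\Eis(\varphi,\lambda)$ carry the full $L$-function data and are not uniformly negligible outside a fixed finite set.
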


\begin{remark}\label{rem:level-gln}
Actually, \cref{thm:gln-bcf} is true for any $\varphi$ that lies in the $\pi$-isotypic subspace $\calA_\pi^2(P)$ for some representation $\pi$ of $M(\bbA)$, and is only required to be spherical at the archimedean places and invariant by any $K\subset\GL_n(\bbA_f)$, with $\ll$ replaced by $\ll_K$. The dependency of the constant on $K$ could, a priori, be large, e.g. polynomial in $\level(K)$ (see \cref{sec:complexity} for the definition of level). One may expect a polylogarithmic dependence on the level, but we can only prove it when $\varphi$ is associated with \emph{unramified cuspidal datum}, as in \cref{thm:unramified}. 
\end{remark}

\vspace{0.5cm}

Now we describe our full results. First, we recall certain terminologies (that are fully explained in the body of the paper).

\begin{itemize}
    \item Following Langlands, a \emph{cuspidal datum} $\chi$ is a $G(F)$-conjugacy class of pairs $(L,\sigma)$, where $L$ is a Levi subgroup of $G$ defined over $F$ and $\sigma$ is a cuspidal representation of $L(\bbA)$; see \cref{sec:automorphic-forms}. Moreover, we recall the notion of \emph{unramifiedness} of a cuspidal datum, due to Arthur, which ensures that the datum has a \emph{trivial} stabilizer; see \cite[\S 15]{arthur2005intro}\footnote{The notion should not be confused with ``unramifiedness'' of the local components of an automorphic representation.}.
    \item We discuss certain estimates of various intertwining operators. The following terminologies are due to Finis--Lapid--M\"uller.
    \begin{itemize}
        \item We say $G$ satisfies \emph{property (TWN+)} if the first derivatives of the global normalization factors of the intertwining operators satisfy certain estimates; see \cref{sec:twn+}.
        \item We say $G$ satisfies \emph{property (BD)} if the first derivatives of the local normalized intertwining operators satisfy certain estimates; see \cref{sec:bd}.
    \end{itemize}
    \item For any Levi subgroup $M$ let $\Pi_2(M)$ denote the isomorphism class of representations of $M(\bbA)$ appearing discretely in $L^2(M(F) \backslash M(\bbA))$; see \cref{sec:automorphic-forms}.
    \item For a parabolic subgroup $P$ with Levi subgroup $M$, $\chi$ a cuspidal datum, and $\pi \in \Pi_2(M)$ let $\calA^2(P)_{\chi,\pi}$ denotes the $(\chi,\pi)$-isotypic part of $\calA^2(P)$; see \cref{sec:automorphic-forms}.
    \item We fix (good) maximal compact subgroups $K_\infty$ of $G(F_\infty)$ and $K_f$ of $G(\bbA_f)$. For $\tau\in \widehat{K_\infty}$, which is the unitary dual of $K_\infty$, we write $\calA^2(P)^{\tau,K}$ for the space of functions in $\calA^2(P)$ which are right $K$-invariant and belong to $\tau$-isotypic subspace. We define $\calA^2_{\chi,\pi}(P)^{\tau,K}$ similarly; see \cref{sec:automorphic-forms}.
    \item We bound the integrals over $\Omega$ in \eqref{pointwise-bound} and \eqref{average-bound} via the inner product of truncated Eisenstein series. We work with a \emph{truncation operator} $\Lambda^T$, as defined by Arthur, where $T$ is a sufficiently dominant element in $\a_{M_0}$. We also define $d(T)$ to be a measurement of how \emph{dominant} $T$ is; see \cref{sec:truncation} for details.
    \item Finally, we fix certain measurements of the {archimedean complexity} of a representation $\pi$ of a real group by the size of its \emph{Casimir eigenvalue}, denoted by $\nu(\pi)$ and certain measurement of the {non-archimedean complexity} of an open compact subgroup $K$ of $G(\bbA_f)$ by its \emph{depth and level}, denoted by $\level(K)$; see \cref{sec:complexity}. To give a sense of their sizes for $G=\GL_2$, if $\pi$ is principal series representations induced from $|.|^{it}\otimes|.|^{-it}$ then $\nu(\pi)\asymp 1+t^2$, and if $K$ is the principal congruence subgroup of level $q$ then $\level(K)=q$.
\end{itemize}

\vspace{0.3cm}

Here is our first main theorem.
\begin{theorem}\label{thm:unramified}
Assume that $G$ satisfies properties (TWN+) and (BD). Let $P$ be a parabolic subgroup of $G$ with Levi subgroup $M$ and $\pi_0\in\Pi_2(M)$. Moreover, assume that $\chi:=(M,\pi_0)$ is an \emph{unramified cuspidal datum} of $G$. Let $\tau\in\widehat{K_\infty}$ and $K\subset K_f$ be an open-compact. Then
\[
\intop_{\substack{{\lambda\in i(\a_M^G)^*} \\ \|\lambda-\lambda_0\|\le 1}} \|\Lambda^T \Eis(\varphi_0,\lambda)\|_{2}^2 \d \lambda \ll \Big((1+\|T\|)\,\log \left(1+\nu(\pi_{0,\infty})+\|\lambda_0\|+\nu(\tau)+\level(K)\right)\Big)^{\dim\a_M^G},
\]
for any unit $\varphi_0\in \calA^2_{\chi,\pi_0}(P)^{\tau,K}$ and $\lambda_0\in i(\a_M^G)^*$.
\end{theorem}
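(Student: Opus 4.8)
The plan is to reduce the integral of the truncated $L^2$-norm over the short $\lambda$-window to an expression involving the Maass–Selberg relations, and then to control the resulting logarithmic-derivative and intertwining-operator terms using the properties (TWN+) and (BD), together with the unramifiedness of $\chi$. First I would invoke Arthur's inner-product formula for truncated Eisenstein series: for $\varphi_0\in\calA^2_{\chi,\pi_0}(P)^{\tau,K}$ a unit and $\lambda$ purely imaginary, $\|\Lambda^T\Eis(\varphi_0,\lambda)\|_2^2$ is a sum over Weyl-group elements $s$ taking $\a_M$ to some $\a_{M'}$ of terms of the shape $\frac{e^{\langle(s\lambda'-\lambda')\cdot,T\rangle}}{\text{(product of linear forms in }\lambda'-\lambda'\text{)}}\langle M(s,\lambda')\varphi_0, M(s,\lambda')\varphi_0\rangle$ evaluated along the diagonal $\lambda'=\lambda$, where the off-diagonal combinatorics produces, after taking the limit, a polynomial-in-$T$ envelope (this is where the $(1+\|T\|)^{\dim\a_M^G}$ factor originates) and the intertwining operators $M(s,\lambda)$ are unitary on the imaginary axis. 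Because $\chi$ is unramified, the stabilizer of $(M,\pi_0)$ in the relevant Weyl group is trivial, so the denominators are genuine nonvanishing linear forms and no higher-order confluence occurs; this is the structural simplification that unramifiedness buys us.

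Next I would integrate over the ball $\|\lambda-\lambda_0\|\le 1$ in $i(\a_M^G)^*$. The unitarity of $M(s,\lambda)$ kills the size of the intertwining operator itself, so what remains is to integrate the ``Arthur $(G,M)$-family'' type kernel — essentially $\int_{\|\lambda-\lambda_0\|\le1}\big|\prod_\alpha \frac{e^{\langle\lambda,\alpha^\vee\rangle T_\alpha}-1}{\langle\lambda,\alpha^\vee\rangle}\big|\,\d\lambda$ — against the logarithmic derivatives of the normalized intertwining operators and of the global normalizing factors. Here property (TWN+) gives that the logarithmic derivative of the global normalizing factor $n(s,\lambda)$ is bounded (up to the dominant $T$-power) by $\log(1+\|\lambda\|+\nu(\pi_{0,\infty})+\level(K))$ on average over the unit window, and property (BD) gives the same for the derivative of the local normalized operator $R(s,\lambda)$; combining them and using that there are boundedly many $\alpha$'s, the $\lambda$-integral of the product of $\dim\a_M^G$ such factors is bounded by $\big((1+\|T\|)\log(1+\nu(\pi_{0,\infty})+\|\lambda_0\|+\nu(\tau)+\level(K))\big)^{\dim\a_M^G}$. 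The $\nu(\tau)$ enters only through the archimedean bound for the normalized operator restricted to the $\tau$-isotypic subspace (property (BD) is uniform in $K_\infty$-type with the stated polynomial-in-$\nu(\tau)$, hence log-after-integration, dependence), and the $\level(K)$ enters through the non-archimedean normalizing factors.

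The main obstacle, and the step I would spend the most care on, is the passage from the formal Maass–Selberg identity to an honest pointwise-in-$\lambda$ bound after integration: the individual terms in Arthur's formula have poles along the walls $\langle\lambda,\alpha^\vee\rangle=0$, and only the full alternating sum is regular, so one cannot bound term-by-term naively. The resolution is to differentiate: write the regularized sum as a contour integral / residue and express it through the derivatives (up to order $\dim\a_M^G$) of $\lambda\mapsto M(s,\lambda)\varphi_0$ evaluated near $\lambda_0$, then feed in the derivative bounds from (TWN+) and (BD). This is exactly the mechanism by which Finis–Lapid–Müller control intertwining operators in the trace formula context, and adapting their estimates — in particular getting the logarithm rather than a power of the analytic conductor, which requires using the bounds \emph{on average} over the unit-length window rather than pointwise — is the technical heart of the argument. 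Once the derivative bounds are in place, assembling the final estimate is a routine induction on $\dim\a_M^G$, peeling off one coordinate of $\a_M^G$ (equivalently one simple root) at a time and using Fubini.
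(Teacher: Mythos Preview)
Your approach is essentially the paper's: rewrite $\|\Lambda^T\Eis(\varphi_0,\lambda)\|_2^2$ via the Langlands inner-product formula (valid because $\chi$ is unramified) as $\langle\calM_M^T(\lambda,P)\varphi_0,\varphi_0\rangle$, then bound the operator norm of $\calM_M^T(\lambda,P)$ on the $(\tau,K)$-isotypic subspace after integrating over the unit ball, using (TWN+) and (BD).

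One point deserves sharpening. You write that the regularized sum should be expressed ``through the derivatives (up to order $\dim\a_M^G$)'' of the intertwining operators, but (TWN+) and (BD) only control \emph{first} logarithmic derivatives, so a bound involving genuinely higher-order derivatives would not close. The resolution --- which you gesture toward when you speak of ``the product of $\dim\a_M^G$ such factors'' and invoke Finis--Lapid--M\"uller --- is their combinatorial formula for the $(G,M)$-family limit (the paper's \cref{prop:FLM formula}): it expresses $\calM_M^T(\lambda,P)$ as a finite sum of operators $\Delta_{\xi(\underline{\beta})}^T(P,\lambda)$, each of which is a product of \emph{first}-order logarithmic derivatives $\delta_{P_i|P_i'}(\lambda)$ taken along \emph{linearly independent} coroot directions $\beta_1^\vee,\dots,\beta_m^\vee$ (interspersed with unitary intertwiners and affine-in-$T$ scalars). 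This product-of-first-derivatives structure is exactly what makes the Fubini step work: after the change of variables $\lambda = i\sum_j t_j\vartheta_j$ to the dual basis, the integral factors and each one-dimensional integral is handled by the rank-one bounds from (TWN+) and (BD). No induction is needed once this formula is in hand.
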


\begin{remark}
The implicit constant in \cref{thm:unramified} depends only on $G$. For $G$ and $\chi$ as in the statement, \cref{thm:unramified} proves that the integral \eqref{average-bound} grows poly-logarithmically in all the parameters, verifying \eqref{eq:expectation}. For $\GL_n$, when \cref{thm:unramified} applies, it is more general than \cref{thm:gln-bcf} in the level and weight aspect, since it gives poly-logarithmic dependence on the level and is not restricted to $K_\infty$-invariant vectors. 
\end{remark}

\vspace{0.5cm}

Of course, \cref{thm:unramified} excludes a ``narrow'' class of cuspidal data that are \emph{not} unramified. For example, for $G=\GL_n$ a Levi subgroup is of the form $M\cong \GL_{n_1}\times\dots\times \GL_{n_k}$ and a representation $\pi \in \Pi_2(M)$ satisfies $\pi \cong \pi_1\otimes\dots\otimes \pi_k$, where $\pi_i \in \Pi_2(\GL_{n_i})$. \cref{thm:unramified} will apply to $\pi$ if and only if each $\pi_i$ is cuspidal, and $\pi_i \not \cong \pi_j$ for $i\ne j$. 

In \cref{thm:BCF} we give a result towards the expectation in \eqref{eq:expectation} for $G$ with the similar level of generalities considered in \cref{thm:unramified} and $\tau$ being trivial, but for \emph{all} cuspidal data $\chi$. Before stating the theorem precisely we introduce a new quantity below, which roughly measures the number of discrete series representations with specific $K_\infty K_f$-type that arises from a given cuspidal datum.

\begin{defn}\label{BCF-def}
Let $\tau\in\widehat{K_\infty}$ and $K\subset K_f$ be an open-compact. 
For any cuspidal datum $\chi$ of $G$ we define
\[
F(\chi;\tau,K):=\sum_{P\supset P_0}\dim\left(\calA^2_{\chi}(P)^{\tau,K}\right)=\sum_{P\supset P_0}\sum_{\pi\in \Pi_2(M_P)}\dim\left(\calA^2_{\chi,\pi}(P)^{\tau,K}\right).
\]
Here $P_0$ is some fixed minimal parabolic subgroup containing $M_0$ and $P\supset P_0$ are called \emph{standard parabolic} subgroups.
\end{defn}

\begin{theorem}\label{thm:BCF}
Assume that $G$ satisfies properties (TWN+) and (BD). 
Let $\chi:=(M,\pi_0)$ be any cuspidal datum of $G$.
Then 
\begin{align*}
\sum_{P\supset P_0}&\sum_{\pi\in\Pi_2(M_P)}\sum_{\varphi\in\calB_{\chi,\pi}(P)^{K_\infty K}}\intop_{\substack{{\lambda\in i(\a_P^G)^*} \\ \|\lambda-\lambda_0\|\le 1}} \|\Lambda^T \Eis(\varphi,\lambda) \|^2_2 \d \lambda \\ 
\ll & \Big((1+\|T\|)\,\log(1+\nu(\pi_{0,\infty})+\|\lambda_0\|+\level(K))\Big)^{\dim\a_M^G}F(\chi;\triv,K),
\end{align*}
where $\calB_{\chi,\pi}(P)^{K_\infty K}$ denotes an orthonormal basis of $\calA^2_{\chi,\pi}(P)^{K_\infty K}$ and $\lambda_0\in i(\a_M^G)^*$.
\end{theorem}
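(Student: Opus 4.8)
The plan is to bound the inner product of truncated Eisenstein series using the Maass–Selberg relations of Arthur, and then to convert the resulting spectral expression into a contour-integral estimate that can be controlled by the analytic properties of the intertwining operators guaranteed by properties (TWN+) and (BD). The starting point is that, for $\lambda\in i(\a_P^G)^*$ purely imaginary, the Maass–Selberg relation expresses $\|\Lambda^T\Eis(\varphi,\lambda)\|_2^2$ as a sum over the Weyl group elements $s$ (more precisely over $W(\a_P,\a_Q)$ for the relevant $Q$) of terms of the shape $\langle (\text{volume factor involving } e^{\langle s\lambda-\lambda, T\rangle}) M(s,\lambda)\varphi, M(s',\lambda)\varphi\rangle$ divided by products of linear forms $\langle s\lambda-\lambda,\varpi^\vee\rangle$; here the $M(s,\lambda)$ are the (global) intertwining operators. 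Summing over an orthonormal basis $\varphi\in\calB_{\chi,\pi}(P)^{K_\infty K}$ turns the inner products into traces of $M(s',\lambda)^*M(s,\lambda)$ restricted to the finite-dimensional space $\calA^2_{\chi,\pi}(P)^{K_\infty K}$, and since $\lambda$ is unitary these operators are unitary, so each trace is bounded in absolute value by $\dim\calA^2_{\chi,\pi}(P)^{K_\infty K}$. After summing over $P\supset P_0$ and $\pi\in\Pi_2(M_P)$, this already produces the factor $F(\chi;\triv,K)$; what remains is to show that the integral over $\|\lambda-\lambda_0\|\le 1$ of the scalar kernel (the volume factors divided by the products of linear forms) is $\ll ((1+\|T\|)\log(1+\nu(\pi_{0,\infty})+\|\lambda_0\|+\level(K)))^{\dim\a_M^G}$.

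For that scalar estimate I would first normalize the intertwining operators, writing $M(s,\lambda)=n(s,\lambda)^{-1}R(s,\lambda)$ where $n(s,\lambda)$ is the global normalizing factor (a ratio of completed $L$-functions) and $R(s,\lambda)$ is the normalized operator. Property (TWN+) gives the needed control on the logarithmic derivative of $n(s,\lambda)$ along $i\a^*$, and property (BD) gives the corresponding bound on the derivative of the local normalized operators, so that the combined operator $R(s,\lambda)$ and its derivative have polynomially-bounded norm in the parameters; but since we have already extracted the operator norms into the $\dim$ factor, what actually enters the scalar integral is the singular denominator $\prod_\alpha \langle s\lambda - \lambda, \alpha^\vee\rangle$ against the bounded exponential $e^{\langle s\lambda-\lambda,T\rangle}$. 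The key analytic input is then a standard "local" estimate: an integral of the form $\int_{\|\mu\|\le 1} \prod_{j} \frac{d\mu}{|\langle \mu,\ell_j\rangle| + \text{(gap)}}$ over a $d$-dimensional space, where $d=\dim\a_M^G$, is $\ll (\log(1/\text{gap}))^{d}$, and the role of the normalizing factors is precisely to supply a gap of size $\gg 1/\log(\text{parameters})$ — this is where (TWN+) enters, via the fact that zeros of the relevant $L$-functions on the line stay away from the point $s\lambda=\lambda$ at distance controlled by the analytic conductor, which is polynomial in $\nu(\pi_{0,\infty})+\|\lambda_0\|+\level(K)$. The factor $(1+\|T\|)^{\dim\a_M^G}$ comes from crudely bounding $|e^{\langle s\lambda-\lambda,T\rangle}-1|\ll \|T\|\,\|\lambda-\lambda'\|$ near the singular hyperplanes to cancel one power of each linear form, or alternatively by a direct volume estimate of a $\|T\|$-thickened neighbourhood of the hyperplane arrangement.

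The main obstacle, I expect, is making the passage from the operator-valued Maass–Selberg expression to a clean product of the $\dim$ factor and a scalar integral fully rigorous: the operators $M(s,\lambda)$ do not commute, the volume/exponential factors depend on $s$, and one must be careful that the Cauchy–Schwarz step $|\tr(A^*B)|\le \|A\|\,\|B\|\,\dim$ is applied on the right finite-dimensional isotypic pieces and that unitarity of $M(s,\lambda)$ on $i\a^*$ is used correctly (this is where restricting to $\chi$-isotypic, $\pi$-isotypic, $K_\infty K$-fixed subspaces is essential, so that everything is finite-dimensional and the trace makes sense). A secondary difficulty is bookkeeping the sum over $P\supset P_0$ together with the double sum over Weyl elements and over $\pi$, and checking that the number of terms is absorbed into the implied constant depending only on $G$; but this is combinatorial rather than analytic. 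Once the reduction is in place, the remaining scalar estimate is the same one that underlies the classical $\GL(2)$ result recalled in \cref{sec:difficulties}, iterated over the $\dim\a_M^G$ coordinates, and the (TWN+)/(BD) hypotheses are exactly tailored to make that iteration go through uniformly.
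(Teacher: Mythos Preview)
Your proposal has a genuine gap. You plan to start from the Maass--Selberg relations and then control a scalar integral with singular denominators, but this route is blocked in precisely the generality of \cref{thm:BCF}. For a general cuspidal datum $\chi$ the forms $\varphi\in\calA^2_{\chi,\pi}(P)$ need not be cuspidal, and in that case only Arthur's \emph{approximate} Maass--Selberg relations are available; as noted in \cref{sec:difficulties}, the precision required to extract a polylogarithmic bound from them is not known without classification of the residual spectrum. Even in the cuspidal case, the individual Maass--Selberg terms are singular on hyperplanes in $i(\a_P^G)^*$, and the limit that defines $\|\Lambda^T\Eis(\varphi,\lambda)\|_2^2$ forces you to differentiate the intertwining operators to arbitrarily high order; your Cauchy--Schwarz step ``$|\tr(A^*B)|\le\dim$ by unitarity'' cannot be applied term by term, because the terms themselves do not have finite integrals. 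Finally, property (TWN+) does not furnish a ``gap of size $\gg 1/\log(\text{parameters})$'' from a zero-free region; it only bounds $\int_{t_0}^{t_0+1}|n_\alpha'/n_\alpha|\,\d t$.

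The paper avoids all of this by \emph{not} using Maass--Selberg. Instead it constructs, via Paley--Wiener, a bi-$K_\infty K$-invariant test function $h$ whose spherical transform is nonnegative and localizes near $\mu_{\pi_0}+\lambda_0$, and compares Arthur's two expressions for the distribution $J_\chi^T(h)$. One expression (\cref{prop:pre-trace}) realizes $J_\chi^T(h)$ as the $\tilde h$-weighted sum of $\|\Lambda^T\Eis(\varphi,\lambda)\|_2^2$, and positivity/localization of $\tilde h$ makes this dominate the left-hand side. The other expression (\cref{prop:spectral side}) writes $J_\chi^T(h)$ through the operators $\calM_L^T(P,\lambda)$, which by the Finis--Lapid--M\"uller formula (\cref{prop:FLM formula}) decompose into products of \emph{first} derivatives $\delta_{P_j|P_j'}$ taken along \emph{linearly independent} directions $\beta_j^\vee$. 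That linear independence lets the $\lambda$-integral over a unit ball factor into one-dimensional integrals, each of which is exactly what (TWN+) and (BD) bound by a single logarithm (\cref{lem:delta bound}); this is how the exponent $\dim\a_M^G$ and the factor $(1+\|T\|)$ arise, and the trace over $\calA^2_{\chi,\pi}(P)^{K_\infty K}$ supplies $F(\chi;\triv,K)$.
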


\begin{remark}
    Note that the $(P,\pi)$-summand on the left-hand side above vanishes identically unless $\calA^2_{\chi,\pi}(P)^{K_\infty K}\neq\{0\}$. In fact, the total number of nonzero $(P,\pi,\varphi)$-summands on the left-hand side is finite and equals to $F(\chi;\triv,K)$ (the $\lambda$-integral however may be empty if $\lambda_0 \in i(\a_M^G)^* \subset i(\a_{M_0}^G)^*$ is far from $i(\a_{P}^G)^* \subset i(\a_{M_0}^G)^*$). 
    Thus \cref{thm:BCF} \emph{on average} is of the same strength as \cref{thm:unramified}, that is, the integral in \eqref{average-bound} \emph{on average} satisfies the estimate in \eqref{eq:expectation}.
    
\end{remark}

In particular, by the positivity of the left-hand side in \cref{thm:BCF} we obtain the following corollary.

\begin{cor}\label{cor:BCF}
Assume that $G$ and $\chi$ are as in \cref{thm:BCF}. 
Let $P$ be a parabolic subgroup of $G$ and $\pi\in\Pi_2(M_P)$ such that $\calA^2_{\chi,\pi}(P)\neq\{0\}$. 
Then
\begin{align*}
\intop_{\substack{{\lambda\in i(\a_P^G)^*} \\ \|\lambda-\lambda_0\|\le 1}} &\|\Lambda^T \Eis(\varphi_0,\lambda) \|^2_2 \d \lambda \\ 
\ll & \Big((1+\|T\|)\,\log(1+\nu(\pi_{\infty})+\|\lambda_0\|+\level(K))\Big)^{\dim\a_M^G}F(\chi;\triv,K),
\end{align*}
for any unit $\varphi_0\in \calA^2_{\chi,\pi}(P)^{K_\infty K}$ and $\lambda_0\in i(\a_{P}^G)^*$.
\end{cor}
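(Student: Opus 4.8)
The plan is to derive \cref{cor:BCF} as an immediate consequence of \cref{thm:BCF}, using only the positivity of the summands on the left-hand side. First I would observe that the triple sum in \cref{thm:BCF} runs over standard parabolics $P$, representations $\pi \in \Pi_2(M_P)$, and orthonormal basis elements $\varphi \in \calB_{\chi,\pi}(P)^{K_\infty K}$, and that each term $\intop_{\|\lambda-\lambda_0\|\le 1}\|\Lambda^T\Eis(\varphi,\lambda)\|_2^2\d\lambda$ is manifestly non-negative. Hence the full sum dominates any single one of its terms; in particular, for the specific parabolic $P$ and representation $\pi$ in the statement of the corollary, it dominates $\sum_{\varphi\in\calB_{\chi,\pi}(P)^{K_\infty K}}\intop_{\|\lambda-\lambda_0\|\le 1}\|\Lambda^T\Eis(\varphi,\lambda)\|_2^2\d\lambda$.

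Next, given an arbitrary unit vector $\varphi_0 \in \calA^2_{\chi,\pi}(P)^{K_\infty K}$, I would complete it to an orthonormal basis $\calB_{\chi,\pi}(P)^{K_\infty K}$ of the finite-dimensional space $\calA^2_{\chi,\pi}(P)^{K_\infty K}$ with $\varphi_0$ as its first element. Since the choice of orthonormal basis in \cref{thm:BCF} is arbitrary (the left-hand side of \cref{thm:BCF} is in fact basis-independent once one notes it can be rewritten spectrally, but we do not even need this — we only need that \cref{thm:BCF} holds for \emph{some}, hence for \emph{this}, choice of basis), the bound of \cref{thm:BCF} applies with this basis. Dropping all terms except the one indexed by $(P,\pi,\varphi_0)$, and using non-negativity of the discarded terms, yields
\[
\intop_{\substack{{\lambda\in i(\a_P^G)^*} \\ \|\lambda-\lambda_0\|\le 1}} \|\Lambda^T \Eis(\varphi_0,\lambda) \|^2_2 \d \lambda \ll \Big((1+\|T\|)\,\log(1+\nu(\pi_{0,\infty})+\|\lambda_0\|+\level(K))\Big)^{\dim\a_M^G}F(\chi;\triv,K),
\]
which is exactly the claimed estimate once we note that $\nu(\pi_{0,\infty})$ and $\nu(\pi_\infty)$ refer to the same quantity (the representation $\pi_0$ underlying the cuspidal datum $\chi$ versus the discrete series $\pi$ on $M_P$ — here one should check the indexing conventions match, but the archimedean Casimir parameter entering the logarithm is that of the fixed datum $\chi$, so the two forms of the bound agree).

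The only genuinely nontrivial point — and it is a bookkeeping point rather than a mathematical obstacle — is to make sure that the hypothesis ``$\calA^2_{\chi,\pi}(P)\neq\{0\}$'' in the corollary is compatible with the hypothesis ``$\calA^2_{\chi,\pi}(P)^{K_\infty K}\neq\{0\}$'' needed for the sum in \cref{thm:BCF} to actually contain the relevant term; if $\calA^2_{\chi,\pi}(P)^{K_\infty K}=\{0\}$ then there is no unit vector $\varphi_0$ of the required type and the statement is vacuous, so we may assume this space is nonzero, in which case $\varphi_0$ lives in it and the argument above goes through verbatim. I do not anticipate any real obstacle here: the corollary is, as the text preceding it indicates, a direct extraction of a single term from the positivity of \cref{thm:BCF}.
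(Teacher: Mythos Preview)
Your approach via positivity is exactly the paper's, but you gloss over the two small points that constitute the entire content of the paper's proof.

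First, the corollary is stated for an arbitrary parabolic $P$, while \cref{thm:BCF} sums only over standard parabolics $P\supset P_0$. The paper handles this by invoking symmetries of Eisenstein series to reduce to standard $P$; you silently assume $P$ is already among those summed over.

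Second, and more importantly, your claim that ``$\nu(\pi_{0,\infty})$ and $\nu(\pi_\infty)$ refer to the same quantity'' is not correct: $\pi_0$ is a cuspidal representation of $M(\bbA)$, while $\pi$ is a discrete-series representation of $M_P(\bbA)$ with $M_P\supsetneq M$ in general, so their Casimir eigenvalues are genuinely different numbers. What is true, and what the paper invokes (via \cref{lem:size-laplace-ev-cuspidal-data}), is that $1+\nu(\pi_\infty)\asymp 1+\nu(\pi_{0,\infty})$, because $\pi$ arises as an iterated residue of an Eisenstein series induced from (a representative of) $\pi_0$, so that $\mu_\pi=\mu_{\pi_0}+\rho_\pi$ with $\|\rho_\pi\|$ uniformly bounded. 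This comparability is what allows one to replace $\nu(\pi_{0,\infty})$ in the bound from \cref{thm:BCF} by $\nu(\pi_\infty)$ in the corollary. This is not an indexing convention but an actual (easy) estimate that needs to be cited.
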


\begin{remark}\label{rem:thm3 problems}

Let us focus on $\GL_n$. In this case, understanding $F(\chi;\triv,K)$ boils down to a local question, as we know for which $\pi$ it holds that $\calA^2_{\chi,\pi}\ne\{0\}$ thanks to \cite{moeglin1989residual}. Then one can show that when $\pi_0$ is cuspidal it holds that 
\[
F(\chi;\triv,K) \ll \dim \calA^2_{\chi,\pi_0}(P)^{K_\infty K}.
\]
However, when $\pi_0$ is not cuspidal $F(\chi;\triv,K)$ can be a lot larger than $\dim \calA^2_{\chi,\pi_0}(P)^{K_\infty K}$, which eventually leads to a non-optimal result, when averaging over a basis of $\calA^2_{\chi,\pi_0}(P)^{K_\infty K}$.

For general groups, the size of $F(\chi;\tau,K)$ has two components. The local component, namely, the dimension of the $K_\infty K_f$-type subspace and the global component, namely, the number and multiplicity of square-integrable representations (with a given $K_\infty K_f\cap M(\bbA)$-type) that are attached to $\chi$. 

The local component could, potentially, be quite larger than $\log(\|\nu(\tau)\|+\level(K))$, but by uniform admissibility (see \cite{bernstein1974reductive}) is bounded depending only on $\tau$ and $K$. This non-optimal dependence on $\level(K)$ even for $\GL_n$ is the main weakness of \cref{thm:BCF} relative to \cref{thm:unramified}. Also, \cref{thm:BCF} is restricted to $\tau=\triv$, but this should not be a major issue.

The global component should, at least conjecturally, be small, and in fact bounded (perhaps depending on $K$). Its understanding depends eventually upon the classification of the residual spectrum. In other words, it depends on how the residual spectrum arises from the residues of the Eisenstein series induced from cuspidal representations, which in turn asks for highly non-trivial analytic properties of the Eisenstein series. All of the above problems are still wide open for general reductive groups, and understanding them for classical groups seems like a very interesting question that we plan to address in future work.
\end{remark}

\begin{remark}
One may wonder whether one can obtain an estimate for the pointwise bound of the local $L^2$-norms in \eqref{pointwise-bound} from corresponding bounds for their $\lambda$-averages in \cref{cor:BCF}. In principle, one may attempt this via a contour-shifting argument and employing the functional equation of the Eisenstein series. This kind of approach is prevalent in analytic number theory: for example, bounding central $L$-value $L(1/2+iT)$ for some automorphic representation for large $T$, by sharp estimation of $\int_{T-H}^{T+H}|L(1/2+it)|^2\d t$ for some $H\le T$, shifting the $it$-contour, and applying functional equation of the $L$-function; see e.g. \cite{good1982square}. 

Clearly, to apply the above contour-shifting method in our case we need to crucially understand meromorphic properties of the Eisenstein series \emph{away from the unitary axis}, in particular, the location of its poles and residues. On $\GL_n$, this is essentially equivalent to understanding \emph{strong} zero-free regions of various Rankin--Selberg $L$-functions. Our current knowledge of such zero-free regions is rather limited and only yields a polynomial strength bound in the question \eqref{pointwise-bound}.
\end{remark}

\vspace{0.5cm}

\subsection{Applications}\label{subsec:applications}

While estimating the local $L^2$-bound of the Eisenstein series is interesting in its own right, it is also very useful in applications to number theory and Diophantine approximation. As a matter of fact, we came to study $L^2$-bounds of the Eisenstein series while trying to understand certain number theoretic problems via the spectral theory of automorphic forms. 

We already mentioned the sequel \cite{jana2024optimal}, where we use the results of this work to prove \emph{optimal Diophantine exponent} for $\SL_n$ assuming \emph{Sarnak's density hypothesis}.

Here we give two other applications of the $L^2$-bounds of the Eisenstein series, concerning the principal congruence subgroups of $\SL_n(\Z)$. Let $\Gamma(1) := \SL_n(\Z)$ and let $\Gamma(q) := \ker(\Gamma(1) \xrightarrow{\mod q} \SL_n(\Z/q\Z))$ be its principal congruence subgroup. By strong approximation, $\Gamma(1)/ \Gamma(q) \cong \SL_n(\Z/q\Z)$. Let $\|\cdot\|$ be the Frobenius norm on $\SL_n(\R)$, i.e. $\|g\|^2 = \sum_{i,j} |g_{i,j}|^2$ (as a matter of fact, one can consider any other norm as well).

The first problem is about \emph{counting} the number of elements 
\[\# \{\gamma \in \Gamma(q) \mid \|\gamma\| \le R\}.\]
There are two regimes in which this problem is well understood. In the regime $R< Cq$ where $C=C(n)$ is some small constant, it is simple to show that 
\[\# \{\gamma \in \Gamma(q) \mid \|\gamma\| \le R\} = 1.\]
In the regime where $q$ is fixed and $R\to \infty$ (or at least a large power of $q$), it holds that 
\begin{equation}\label{eq:DRS}
\# \{\gamma \in \Gamma(q) \mid \|\gamma\| \le R\} = c_n\frac{R^{n(n-1)}}{\#\SL_n(\Z/q\Z)}+ O(R^{n(n-1)-\delta_n})     
\end{equation}
with $c_n$ an explicit constant, and $\delta_n>0$ a small constant. This follows from the work of Duke, Rudnick and Sarnak \cite[Theorem~1.10]{duke1993density}, and is based on spectral methods\footnote{The result is for $q=1$, but the method works for every $q$. See also \cite[Corollary 5.2]{gorodnik2015quantitative} for a generalized statement with a simpler proof. Recently, in \cite{blomer2023hyperbolic}, the authors significantly improved the error term of \cref{eq:DRS} over \cite{duke1993density}. 
}. 

Sarnak and Xue (\cite[Conjecture 2]{sarnak1991bounds}, see also \cite{sarnak1991diophantine}) made the following conjecture, which addresses the intermediate range when $R$ is a small power of $q$:
\[
\# \{\gamma \in \Gamma(q) \mid \|\gamma\| \le R\} \ll_\epsilon \frac{R^{n(n-1)+\epsilon}}{|\SL_n(\Z/q\Z)|}+ R^{n(n-1)/2} .
\]
In a recent work, Assing and Blomer \cite[Theorem~1.4]{assing2022density} essentially confirm the above Sarnak--Xue conjecture showing that
\begin{equation}\label{eq:counting}
\# \{\gamma \in \Gamma(q) \mid \|\gamma\| \le R\}
\ll_\epsilon (Rq)^\epsilon\left(\frac{R^{n(n-1)}}{q^{n^2-1}}+R^{n(n-1)/2}\right),
\end{equation}
\emph{conditional} on a difficult conjecture, namely \cite[Hypothesis 1]{assing2022density}, on the local $L^2$-bound of the Eisenstein series. This hypothesis is slightly weaker than the expected bound in \eqref{eq:expectation} for the integral \eqref{pointwise-bound}. In \cref{thm:counting}, we make Assing--Blomer's result \emph{unconditional}, by employing the method of proving \cref{thm:BCF}, while also using their work as a crucial input.

\begin{theorem}\label{thm:counting}
Let $q$ be a square-free integer. Then \cref{eq:counting} holds
for every $\epsilon>0$ and every $R\ge 1$.
\end{theorem}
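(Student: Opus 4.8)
The plan is to deduce \cref{thm:counting} from \cref{thm:BCF} (applied to $G = \SL_n$, which satisfies properties (TWN+) and (BD)) via the spectral approach of Assing--Blomer, feeding our unconditional local $L^2$-bound of Eisenstein series in place of their Hypothesis 1. First I would recall the standard reduction of the counting problem to a spectral estimate: one picks a nonnegative bump function $\psi$ on $\SL_n(\R)$ supported near the identity, forms the automorphic kernel $K_R(x,y) = \sum_{\gamma\in\Gamma(q)}\psi_R(x^{-1}\gamma y)$ where $\psi_R$ is suitably dilated so that $\widehat{\psi_R}$ concentrates on the tempered parameters of size $\le R$, and then $\#\{\gamma\in\Gamma(q): \|\gamma\|\le R\}$ is comparable to $\langle K_R \mathbf 1_\Omega, \mathbf 1_\Omega\rangle$ for a fixed compact $\Omega$ in the fundamental domain. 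Spectrally expanding $K_R$ over $L^2(\Gamma(q)\backslash\SL_n(\R))$ — or adelically over $L^2(\SL_n(F)\backslash\SL_n(\bbA)^1)$ with level structure $K = K(q)$ — splits the count into a discrete (cuspidal plus residual) contribution and a continuous (Eisenstein) contribution. The discrete part is handled by the density hypothesis input of Assing--Blomer together with the trivial ``$R < Cq \Rightarrow$ count is $1$'' observation and \eqref{eq:DRS}; this part of their argument is already unconditional and I would simply quote it.

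The heart of the matter is the continuous contribution, which after unfolding the spectral expansion of the Eisenstein part of $K_R$ becomes a sum, over standard parabolics $P$ with Levi $M$, over $\pi\in\Pi_2(M)$, over an orthonormal basis $\varphi\in\calB(P)$, and an integral over $\lambda\in i(\a_P^G)^*$, of $\widehat{\psi_R}$ evaluated at the spectral parameter of $\Eis(\varphi,\lambda)$ times $\int_\Omega |\Eis(\varphi,\lambda)(x)|^2\d x$. The key point is that $\int_\Omega|\Eis(\varphi,\lambda)|^2$ can be bounded by $\|\Lambda^T\Eis(\varphi,\lambda)\|_2^2$ for a fixed sufficiently dominant $T$ (since $\Omega$ is a fixed compact set, truncation does not change the Eisenstein series on $\Omega$ once $T$ is large enough relative to $\Omega$), so the whole continuous contribution is controlled by the left-hand side of \cref{thm:BCF}. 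Thus I would chop the $\lambda$-integral over the relevant tempered region (of size $O(R)$ in each direction) into $O(R^{\dim\a_P^G})$ unit boxes, apply \cref{thm:BCF} on each box with $\lambda_0$ the center, and sum: since $q$ is square-free, $K(q)$ has level $q$, so $\log(1+\nu(\pi_{0,\infty}) + \|\lambda_0\| + \level(K)) \ll \log(Rq)$, and the factor $F(\chi;\triv,K)$ — the number of nonzero $(P,\pi,\varphi)$ summands — together with the volume normalization $1/|\SL_n(\Z/q\Z)|$ coming from $\psi_R$ being $\Gamma(q)$-periodized reproduces exactly the main term $R^{n(n-1)}/q^{n^2-1}$ up to $(Rq)^\epsilon$. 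One has to be slightly careful that the contribution of cuspidal data $\chi=(M,\pi_0)$ with $\pi_0$ non-cuspidal (the degenerate Eisenstein pieces) is likewise summable; here one uses that for $\SL_n$ the residual datum structure is known (M\oe glin--Waldspurger), $F(\chi;\triv,K)$ is still polynomially bounded in $q$, and the extra saving in the size of the spectral support of these degenerate series compensates, exactly as in Assing--Blomer's analysis.

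The main obstacle I anticipate is bookkeeping the interface between \cref{thm:BCF} and the precise shape of the Assing--Blomer estimate: their Hypothesis 1 is phrased for the pointwise quantity \eqref{pointwise-bound}, whereas \cref{thm:BCF} only controls the short spectral average \eqref{average-bound}, so I must verify that their argument genuinely only uses the averaged bound — which it does, because the kernel $\widehat{\psi_R}$ is spread out over unit-scale windows in $\lambda$, smoothing the pointwise dependence — and then check that the resulting accounting of the $q$-power in the main term is sharp, i.e.\ that passing through $\|\Lambda^T\cdot\|_2^2$ and $F(\chi;\triv,K)$ does not lose a power of $q$. The square-freeness of $q$ enters precisely to keep $\level(K(q)) = q$ (rather than something worse) and to keep the local dimension count $\dim\calA^2_{\chi,\pi}(P)^{K_\infty K(q)}$, hence $F(\chi;\triv,K(q))$, within $q^{\dim N_P}$-type bounds that match the volume factor; a non-square-free $q$ would require the polylogarithmic level dependence that \cref{thm:BCF} does not deliver (only \cref{thm:unramified} does, for unramified data). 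Modulo this careful matching, the theorem follows.
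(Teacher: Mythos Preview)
Your proposal has a genuine gap: the paper explicitly says (see \S1.6 and \cref{rem:thm3 problems}) that \cref{thm:counting} does \emph{not} follow from \cref{thm:BCF} directly, and the obstruction is exactly where your sketch is vague.

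First, the spectral picture is inverted. The test function relevant for counting, call it $h_R$, is (a smoothed) characteristic function of the ball $B_R\subset\SL_n(\R)$; its support grows with $R$, so its spherical transform $\tilde h_R$ decays \emph{rapidly} in $\|\mu\|$ and is essentially supported on $\|\mu\|\ll (Rq)^\epsilon$, not on a region of size $R$. Your chopping into $O(R^{\dim\a_P^G})$ unit boxes would therefore lose an artificial power of $R$. The $R$-dependence enters instead through the amplification $|\tilde h_R(\mu)|^2 \asymp R^{-n(n-1)+2n\|\Re(\mu)\|_\infty}$ on non-tempered $\mu$ (see \cref{non-temp-paley-wiener}); this weighting by distance from tempered is the whole mechanism by which the density theorem of Assing--Blomer is invoked, and \cref{thm:BCF} simply does not see it.

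Second, because $\supp(h_R)$ grows with $R$, Arthur's identity in \cref{prop:pre-trace} forces $d(T)\gg 1+\log R$, so the truncation parameter cannot be ``fixed sufficiently dominant'' as you propose. The paper therefore works directly with the full distribution $J^T(h_{R,q})$ rather than with \cref{thm:BCF}: one bounds $J^T(h_{R,q})$ via \cref{prop:spectral side} and \cref{prop:calM bound} (this is the step that shares machinery with \cref{thm:BCF}), reducing to the weighted sum $\sum_\pi \dim(\calA^2_\pi(P)^{K_\infty K(q)})\,R^{2n\|\Re(\mu_\pi)\|_\infty}$, which is exactly the quantity controlled by Assing--Blomer's density theorem for square-free $q$ (\cref{lem:assing-blomer}). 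Then a separate comparison (\cref{prop: AB reduction1}) shows $\int_\Omega K_{h_{R,q}}(x,x)\,dx\le J^T(h_{R,q})$, and standard manipulations with convolutions finish the count. Finally, your explanation of the role of square-freeness is off: $\level(K(q))=q$ always; square-freeness is needed because the Assing--Blomer density input requires it.
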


\vspace{0.3cm}

The second application we have is the problem of \emph{optimal lifting}. Recall that by strong approximation for $\SL_n$, the map $\SL_n(\Z) \to \SL_n(\Z/q\Z)$ is onto. We may therefore ask, given $\overline{\gamma} \in \SL_n(\Z / q\Z)$, to find a lift of it to $\gamma \in \SL_n(\Z)$, with $\|\gamma\|$ minimal. Gorodnik--Nevo studied this problem in a general context and proved that for every $\overline{\gamma}$ it is possible to find a lift $\gamma$ with $\|\gamma\| \ll_\epsilon q^{C_n+\epsilon}$ where $C_n$ is some explicit constant; see \cite[Theorem 1.1]{gorodnik2012lifting}. One may then wonder what is the smallest $C_n$ so that the statement holds for it. A simple lower bound is $C_n \ge 1+1/n$, which follows from \cref{eq:DRS} (with $q=1$) and the fact that $|\SL_n(\Z/q\Z)| \gg_\epsilon q^{n^2-1-\epsilon}$, but it is not the correct answer. In \cite{kamber2023lifting} by the second author and P{\'e}t{\'e}r Varj{\'u} (which was completed after this work), it is shown that the correct answer is $C_n= 2$ (following weaker results of \cite{sarnak2015lettermiller}). More precisely, the authors showed that for every $q$ and $\overline{\gamma} \in \SL_n(\Z / q\Z)$, there is a lift $\gamma \in \SL_n(\Z)$ with $\|\gamma\| \ll_n q^2 \log q$. In the other direction, they show that for every $n$, $\epsilon>0$, and for every $q$ there is $\overline{\gamma} \in \SL_n(\Z / q\Z)$ such that every lift $\gamma \in \SL_n(\Z)$ of $\overline{\gamma}$ satisfies $\|\gamma\| \gg_{n,\epsilon} q^{2-\epsilon}$.

In contrast to the results of \cite{kamber2023lifting}, Sarnak \cite{sarnak2015lettermiller} observed that when one considers \emph{almost all} the elements instead of \emph{all} the elements, the simple lower bound $C_n=1+1/n$ should be the correct answer to the problem, and proved it for $n=2$. This property is expected to hold in a general context and we call it \emph{optimal lifting}. We again refer to \cite{golubev2020sarnak} for a detailed discussion of the general problem of optimal lifting, and to \cite{kamber2019optimalSL3} for this specific problem. 

In the recent work of Assing and Blomer \cite[Theorem~1.4]{assing2022density}, they essentially solved the optimal lifting problem (see the statement of \cref{thm:optimal-lifting}), but again \emph{conditional} on \cite[Hypothesis 1]{assing2022density}. Once again, in \cref{thm:optimal-lifting}, employing the method of proving \cref{thm:BCF} and crucially inputting one of the main results of \cite{assing2022density}, we make the above result of Assing--Blomer's \emph{unconditional}.

\begin{theorem}\label{thm:optimal-lifting}
For every $\epsilon>0$ there is $\delta>0$ such that for every square-free integer $q$, the number of elements in $\overline{\gamma}\in\SL_n(\Z/q\Z)$ without a lift $\gamma\in \SL_n(\Z)$ with $\|\gamma\| \le q^{1+1/n+\epsilon}$ is at most $O(q^{n^2-1-\delta})$.
\end{theorem}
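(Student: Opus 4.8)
The plan is to deduce \cref{thm:optimal-lifting} from the counting statement \cref{thm:counting} together with the Assing--Blomer machinery, essentially by a counting/packing argument in $\SL_n(\Z)$. Let me set $R = q^{1+1/n+\epsilon}$. The key point is that if $\overline{\gamma} \in \SL_n(\Z/q\Z)$ fails to have a lift of norm $\le R$, then a fixed lift $\gamma_0$ of it (of arbitrary size) satisfies $\gamma_0 \Gamma(q) \cap B_R = \emptyset$, where $B_R = \{g \in \SL_n(\R) : \|g\| \le R\}$. Counting lattice points of $\Gamma(1)$ in $B_R$ and distributing them among the $|\SL_n(\Z/q\Z)| \asymp q^{n^2-1}$ cosets of $\Gamma(q)$, one sees that the "average" coset contains roughly $R^{n(n-1)}/q^{n^2-1} \approx q^{n(n-1)+n(n-1)/n+n\epsilon}/q^{n^2-1} = q^{n\epsilon}$ points, which just barely exceeds $1$. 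So \emph{most} cosets are hit, and the number of un-hit cosets is controlled by the variance/second-moment of the coset-counting function.

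First I would recall the Assing--Blomer setup: they reduce optimal lifting (and counting) to a bound on an "amplified" or weighted spectral average of the form $\sum_{\gamma \in \Gamma(q)} \psi_R(\gamma)$ for suitable test functions $\psi_R$ supported near $B_R$, expanded spectrally over $L^2(\Gamma(q)\backslash \SL_n(\R))$. Their Hypothesis~1 was exactly the input needed to control the Eisenstein (continuous spectrum) contribution to this average. The strategy is therefore: (i) quote from \cite{assing2022density} the precise spectral identity and the reduction showing that, granted the Eisenstein bound, the error terms are as claimed; (ii) supply that Eisenstein bound from \cref{thm:BCF} --- more precisely from \cref{cor:BCF} applied to $G = \SL_n$ with $K = \Gamma(q)$ (this is where square-freeness of $q$ enters, so that the relevant $F(\chi;\triv,K)$ and $\level(K)$ are understood and the log-power bound with the right power of $q$ holds); (iii) conclude. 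For the passage from counting to lifting, I would use the standard observation that a ball of radius $\asymp q/C$ around each $\gamma \in \Gamma(1)$ injects into a single coset of $\Gamma(q)$ (since $\Gamma(q)$-translates are $\gg q$-separated), so a coset is hit by some $\gamma$ with $\|\gamma\| \le R$ as soon as the count of $\Gamma(1)$-points in $B_{R(1+O(1/q))}$ landing in that coset is positive; the number of cosets with zero count is then $\le$ (number of cosets) $-$ (number with positive count), and the latter is estimated by Cauchy--Schwarz from the first and second moments, both of which follow from \cref{thm:counting} applied with $R$ and with $R$ replaced by a slightly larger radius, plus the main-term asymptotic \eqref{eq:DRS}.

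Concretely: let $N(\overline{\gamma}) = \#\{\gamma \in \Gamma(1) : \|\gamma\| \le R,\ \gamma \bmod q = \overline{\gamma}\}$. Then $\sum_{\overline{\gamma}} N(\overline{\gamma}) = \#\{\gamma \in \Gamma(1): \|\gamma\|\le R\} = c_n R^{n(n-1)} + (\text{error})$ by \eqref{eq:DRS}-type input, while $\sum_{\overline{\gamma}} N(\overline{\gamma})^2 = \#\{(\gamma_1,\gamma_2) \in \Gamma(1)^2 : \|\gamma_i\| \le R,\ \gamma_1^{-1}\gamma_2 \in \Gamma(q)\}$, and writing $\gamma = \gamma_1^{-1}\gamma_2 \in \Gamma(q)$ with $\|\gamma\| \ll R^2$ reduces the second moment to $\sum_{\gamma \in \Gamma(q),\ \|\gamma\|\ll R^2} (\text{number of }\gamma_1 \text{ with }\|\gamma_1\|\le R,\ \|\gamma_1\gamma\|\le R)$; applying \cref{thm:counting} to count the $\gamma \in \Gamma(q)$ in balls (with the bound $(Rq)^\epsilon(R^{2n(n-1)}/q^{n^2-1} + R^{n(n-1)})$ for radius $\asymp R^2$) gives $\sum N(\overline{\gamma})^2 \ll (Rq)^\epsilon(R^{2n(n-1)}/q^{n^2-1} + R^{n(n-1)})$. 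The number of $\overline{\gamma}$ with $N(\overline{\gamma}) = 0$ is then $\le \#\SL_n(\Z/q\Z) - (\sum N(\overline{\gamma}))^2 / \sum N(\overline{\gamma})^2$ by Cauchy--Schwarz, and plugging in $R^{n(n-1)} = q^{n(n-1)(1+1/n)+n(n-1)\epsilon} = q^{n^2-1 + n(n-1)\epsilon}$ one checks that the second term $R^{n(n-1)}$ in the second-moment bound dominates only in a range that still yields $\#\{N(\overline{\gamma})=0\} \ll q^{n^2-1-\delta}$ for some $\delta = \delta(\epsilon) > 0$.

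The main obstacle I expect is step (ii): extracting from \cref{thm:BCF}/\cref{cor:BCF} precisely the Eisenstein bound in the shape demanded by \cite{assing2022density}, with the correct (polynomial, square-free) dependence on $q$ through $\level(\Gamma(q))$ and through $F(\chi;\triv,\Gamma(q))$. As \cref{rem:thm3 problems} flags, when the cuspidal datum is non-cuspidal $F(\chi;\triv,K)$ can be larger than the naive dimension, so one must either show that for $\SL_n$ with square-free level this loss is still acceptable for the application (it is, because Assing--Blomer only need a bound matching the density hypothesis, not the full polylogarithmic expectation), or combine \cref{thm:BCF} across all data $\chi$ carefully. A secondary technicality is bookkeeping the truncation parameter $T$ and the passage from $\|\Lambda^T\Eis\|_2^2$ on $\SL_n(\Z)\backslash\SL_n(\R)$ to the genuinely local $L^2$-mass on a fixed compact $\Omega$ that enters the pre-trace formula of \cite{assing2022density}; this is routine given the constant-term description of $\Eis - \Lambda^T\Eis$ but must be done uniformly in $q$.
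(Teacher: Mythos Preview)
Your Cauchy--Schwarz/second-moment approach has a fatal gap: it cannot produce a power saving in the exceptional set. With $R=q^{1+1/n+\epsilon}$ and your claimed bound $\sum N(\overline\gamma)^2 \ll (Rq)^{\epsilon'}\bigl(R^{2n(n-1)}/q^{n^2-1}+R^{n(n-1)}\bigr)$, the main term $R^{2n(n-1)}/q^{n^2-1}$ matches $(\sum N)^2/|\SL_n(\Z/q\Z)|$ exactly, but the $(Rq)^{\epsilon'}$ in front destroys the cancellation. Plugging into $\#\{N=0\}\le |\SL_n(\Z/q\Z)| - (\sum N)^2/\sum N^2$ yields only $\#\{N=0\}\le (1-cq^{-\epsilon''})q^{n^2-1}$, which is still $\asymp q^{n^2-1}$. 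What you actually need is a bound on the \emph{variance} $\sum N^2 - (\sum N)^2/|\SL_n(\Z/q\Z)|$, and no amount of upper-bounding $\sum N^2$ with $\epsilon$-losses will give that. (There are also issues in your second-moment computation itself: for $n\ge 3$ one has $\|\gamma_1^{-1}\gamma_2\|\ll R^n$ rather than $R^2$, and \cref{thm:counting} counts $\Gamma(q)$-points in \emph{centered} balls, not in translates $\gamma_1^{-1}B_R$. But these are secondary to the structural problem above.)

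The paper obtains the variance bound directly, without passing through \cref{thm:counting}, by exploiting the spectral gap. It sets $m_R=h_R*h_{R^\eta}$ (an extra mollifier of width $R^\eta$), defines $M_R^q(x)=\sum_{\gamma\in\Gamma(q)}m_R(\gamma x)$, and shows via unfolding that $\|M_R^q\|_2^2=K^q_{m_R*m_R^*}(e,e)$. Subtracting the constant function's contribution $1/\vol(\Gamma(q)\backslash\SL_n(\R))$ gives $\|M_R^q-U_q\|_2^2$; on every nontrivial $\varpi$ the spectral gap $\|\Re(\mu_\varpi)\|_\infty\le (n-1)/2-\delta_1$ forces $|\tilde m_R(\mu_\varpi)|\le R^{-\delta_2}|\tilde h_R(\mu_\varpi)|$, so $\|M_R^q-U_q\|_2^2\le R^{-2\delta_2}K^q_{h_R*h_R^*}(e,e)\ll R^{-n(n-1)-\delta}$ by \cref{prop:AB main prop}. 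A Chebyshev-type argument (cosets $\overline g$ without a small lift contribute a definite mass to $\|M_R^q-U_q\|_2^2$) then finishes. The extra mollifier and the spectral gap are precisely what furnish the missing power saving that Cauchy--Schwarz alone cannot.
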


We now add a few remarks regarding \cref{thm:counting} and \cref{thm:optimal-lifting}.

\begin{remark}
While \cref{thm:counting} and \cref{thm:optimal-lifting} are limited to $q$ square-free (except for $n=2$, where Sarnak's proof holds for all $q$), our method can prove them for general $q$ assuming Sarnak's density conjecture, which is implied by the Generalized Ramanujan Conjecture (GRC) for cusp forms of $\GL_n$ over $\Q$. 
More generally, unconditionally and independently of (most of) the work of Assing--Blomer, one can get slightly weaker results, with the exponents multiplied by $1+2\delta/(n-1)$, where $0 \le \delta \le (n-1)/2$ is the best approximation for the GRC for $\GL_n$ over $\Q$. Specifically, one has $\delta \le \frac{1}{2}-\frac{1}{1+n^2}$; see \cite{sarnak2005notes}.
\end{remark}

\begin{remark}
Conjecturally, similar ``Sarnak--Xue Counting'' and ``Optimal Lifting'' theorems should hold more generally, in particular, for congruence subgroups of arithmetic lattices in semisimple Lie groups. We refer to \cite{golubev2020sarnak}, where those conjectures are discussed at length (for the specific problem of $\Gamma(q)$ see \cite[\S 2.6]{golubev2020sarnak}). In particular, for \emph{uniform (cocompact) lattices} one can prove that Sarnak--Xue counting and optimal lifting follow from \emph{Sarnak's density conjecture} in automorphic forms, which is, presumably, a true approximation of the (false) \emph{Naive Ramanujan Conjecture} (see \cite{golubev2020sarnak}). For non-uniform lattices, this implication is unknown, because of the analytic difficulties coming from the Eisenstein series. The techniques of this paper allow us to essentially overcome those analytic difficulties, for non-uniform lattices in groups different than $\GL_n$, as long as properties (TWN+) and (BD) are known. 
For principal congruence subgroups of $\SL_n(\Z)$ of square-free level Assing--Blomer proved Sarnak's density conjecture, which allows in combination with this work to deduce the two theorems.
\end{remark}

\begin{remark}
    We note that in this paper we do not attempt to prove \cite[Hypothesis 1]{assing2022density}, and it remains an interesting and hard open question. We refer to \cref{subsec:proof of application} for a discussion.
\end{remark}

\subsection{Difficulties in higher rank}\label{sec:difficulties}

In this section, we try to describe the difficulties one faces while trying to estimate \eqref{pointwise-bound} and \eqref{average-bound}.

We start with a toy-example -- estimating \eqref{pointwise-bound} for a low-rank group. We take $G=\GL_2$ over $\Q$, $M=\GL_1\times\GL_1$, $P$ the Borel subgroup in $G$, $\pi=\triv\times\triv$, and $\lambda=(it,-it)$ with $t\in\R$. We take $\varphi\in\calA^2(P)_\pi$ to be the $\mathrm{O}_2(\R)\GL_2(\hat{\Z})$-invariant unit vector. In the classical language $\Eis(\varphi,\lambda)$ is the same as $E(\cdot,1/2+it)$. The classical Maass--Selberg relations (see \cite[pp.153]{miller2001existence}) yield that
\begin{align*}
    &\|\Lambda^TE(\cdot,1/2+it)\|^2_2 
    \,=\lim_{t'\to t}\langle\Lambda^TE(\cdot,1/2+it'),\Lambda^TE(\cdot,1/2+it)\rangle\\
    &=\lim_{t'\to t}\left(\frac{e^{iT(t'-t)}}{i(t'-t)}+\frac{\overline{c(it')}e^{iT(t'+t)}}{i(t'+t)}-\frac{c(it)e^{-iT(t'+t)}}{i(t'+t)}-\frac{{c(it')}\overline{c(it)}e^{-iT(t'-t)}}{i(t'-t)}\right)\\
    &=2T-\frac{c'(it)}{c(it)} + \Re\left(\frac{c(it)e^{-2itT}}{t}\right)\ll T\log(1+|t|).
\end{align*}
Here $c(s)$ is the \emph{scattering matrix} appearing in the constant term of $E(\cdot,1/2+it)$, given by $\frac{\xi(s)}{\xi(1+s)}$, where $\xi(s)$ is the completed Riemann zeta function. The last estimate follows from the standard zero-free region of the Riemann zeta function.

Now we explain the difficulties we face if we try to prove the statement given in \eqref{eq:expectation}, by generalizing the method above.
\begin{enumerate}
    \item The case when $\Eis(\varphi,\lambda)$ is the minimal parabolic Eisenstein series for $\GL_n$ is a natural higher-rank analogue of the above. Higher rank Maass--Selberg relations are in this case more complicated than the $\GL_2$ version. The number of summands in the formula is ($n!)^2$. Moreover, each summand will likely have (a priori, higher-order) poles as $\lambda'\to\lambda$. The maneuver of ``grouping terms'' which cancel each other's poles, analogous to the $\GL_2$ case, is in fact, combinatorially quite involved. Such complications arise already for $\GL_3$, as can be seen in \cite{miller2001existence}.
    
    \item Apparently, the next difficult case is when $\Eis(\varphi,\lambda)$ is induced from a cuspidal representation of a non-minimal parabolic, i.e., $\varphi$ is a cusp form. Apart from solving the combinatorial problem that arises in the limit procedure, one also needs to know certain strong estimates of the logarithmic (a priori, high) derivatives of the intertwining operators. When the Eisenstein series is spherical everywhere such estimates can be described via the zero-free region of various Rankin--Selberg $L$-functions for $\GL_{m_1}\times\GL_{m_2}$ with $m_1+m_2\le n$ at the edge of the critical strips. However, the quality and generality of the zero-free region that is required to prove \eqref{eq:expectation} is still out of reach.
    
    \item We remark that Langlands (see \cite[Proposition 15.3]{arthur2005intro}) proved the Maass--Selberg relations, but only when the underlying representation $\pi$ of $M(\bbA)$ is \emph{cuspidal}. Arthur, in \cite{arthur1982inner}, proved an approximate Maass--Selberg relations for a general discrete series $\pi$. However, the precision and generality of the approximation that we need to prove \eqref{eq:expectation} is not available. In fact, the required precision in Arthur's approximate Maass--Selberg formula can be obtained only via the classification of discrete series representations, in particular, how the residual spectrum arises from the Eisenstein series induced from cusp forms. This, in turn, requires understanding deep meromorphic properties of the Eisenstein series, in particular, the locations, orders, and residues of their poles.
    
    \item When we move from $\GL_n$ to general reductive groups, all of the above difficulties persist prominently. First of all, our ignorance of the classification of the discrete series, equivalently the required meromorphic properties of a general Eisenstein series, hinders the very first step of estimating the $L^2$-norm via the Maass--Selberg relations. Even in the case of the Eisenstein series induced from cuspidal representations, the limit procedure requires a deep understanding of the meromorphic properties of the intertwining operators and the size of the (a priori, high) derivatives of the same - none of which is yet in the grasp of the current literature.
\end{enumerate}

Primarily, this is why in this paper we try to estimate \eqref{average-bound} for general Eisenstein series on general reductive groups \emph{not} via the Maass--Selberg formula, rather via the spectral side of the Arthur--Selberg trace formula. We explain this in the next subsection.

In a future work, in order to estimate \eqref{pointwise-bound} for $\GL_n$ using the Maass--Selberg relations, we plan to address some of the issues pointed above and in particular, to give a robust method of understanding the combinatorics and corresponding limit problem mentioned above.

\begin{remark}
    There is one case in which the Maass--Selberg relations do yield a nice formula, and it is exactly the unramified case of \cref{thm:unramified}. The Maass--Selberg relations in this case are given in \cref{lem:Maass-Selberg unram}, and essentially reduce to understanding a certain operator we denote by $\calM_M^T(\lambda,P)$, defined by a certain limit. Arthur's method of \emph{$(G,M)$-family} gives a way to calculate the limit in the formula. More precisely, Finis--Lapid--M\"uller \cite{finis2011spectral-abs}, generalizing previous results of Arthur in the spherical case, essentially gave a formula for $\calM_M^T(\lambda,P)$, given in \cref{prop:FLM formula}. The formula is a polynomial in $T$ with coefficients given by certain intertwining operators and their \emph{first} derivatives, given along \emph{linearly independent directions}. Then when we take $\lambda$-average over a short interval around $\lambda_0$, we can use properties (TWN+) (and (BD) for general congruence subgroups) to deduce a bound on $\calM_M^T(\lambda,P)$, given in \cref{prop:calM bound}, from which \cref{thm:unramified} follows. This technique can be traced back at least to \cite{muller2007weyl} (for $\GL_n$ and a fixed level), and was formalized in \cite{finis2015limit}.
\end{remark}


\subsection{High level sketch for the proof of \texorpdfstring{\cref{thm:BCF}}{Theorem 3}}

Our proof  of \cref{thm:BCF} uses the Arthur--Selberg trace formula, or more precisely, Arthur's development of the spectral side of the trace formula and Finis--Lapid--M\"uller's refinement of it.

As we described in the previous subsection, it is quite difficult to estimate $\|\Lambda^T \Eis(\varphi_0,\lambda_0)\|^2_2$ pointwise in $\lambda_0$. Proving an estimate on a $\lambda$-average over a short interval around $\lambda_0$ seems to be more tractable. Moreover, in our proof we also need to average over $\varphi$ in an orthonormal basis, equivalence classes of associated parabolic subgroups, and discrete series representations of the Levi of these parabolic subgroups.

For a sufficiently dominant truncation parameter $T$ and a bi-$K_\infty K$-invariant test function $f$ on $G(\bbA)$, Arthur defined a distribution $J_\chi^T(f)$ on the spectral side, that (for $d(T)$ sufficiently large) essentially can be given by a weighted sum over $P\supset P_0$ of
\[\sum_{\pi\in\Pi_2(M_P)}\sum_{\varphi\in\B_{\chi,\pi}(P)}\int_{i(\a_M^G)^*}\tilde{f}(\mu_\pi+\lambda)\|\Lambda^T\Eis(\varphi,\lambda)\|^2\d\lambda,\]
where $\mu_\pi$ is the Langlands parameter of $\pi$ (which is necessarily spherical as a $M(\bbA)$-a representation) and $\tilde{f}$ is the \emph{spherical transform} of $f$.

We choose an $f$ with \emph{bounded support} via the Paley--Wiener theory and the theory of spherical inversion so that spherical transform $\tilde{f}$ localizes at $\mu_\pi+\lambda\approx\mu_{\pi_0}+\lambda_0$, and so that $J_\chi^T(f)$ majorizes the left-hand side of the estimate in \cref{thm:BCF}. Our goal now is to estimate $J_\chi^T(f)$ satisfactorily.

Arthur, in his deep work to develop the spectral side of the trace formula, showed that $J_\chi^T(f)$ is a polynomial in $T$ whose coefficients are given in terms of certain intertwining operators and their derivatives. More precisely, $J_\chi^T(f)$ essentially takes the form of a weighted sum over $P\supset P_0$ of
\[\sum_{\pi\in\Pi_2(M_P)}\sum_{L\supset M_P}\int_{i(\a_L^G)^*}\tilde{f}(\lambda)\tr\left(\calM_L^T(P,\lambda)\mid_{\calA_{\chi,\pi}^2(P)^{K_\infty K}}\right)\d\lambda.\]
The combinatorial formula of Finis--Lapid--M\"uller allows us to understand $\calM_L^T(P,\lambda)$ in terms of a certain linear combination of various rank one intertwining operators and their first derivatives. We can then use the bound on $\calM_L^T(P,\lambda)$ given in \cref{prop:calM bound} that are yielded by properties (TWN+) and (BD), together with Paley--Wiener theory to conclude the proof.

The fact that we only see the first derivative of the intertwiners in the above formula, circumvents the difficulty that arises because of, a priori, high derivatives of the same in the Maass--Selberg formula, as described in \cref{sec:difficulties}. Moreover, the formula above, which is yielded by the theory of \emph{$(G,L)$-families} due to Arthur and Finis--Lapid--M\"uller, bypasses the combinatorial complications that arise in the limit procedure in the Maass--Selberg formula, as described in \cref{sec:difficulties}.

It is worth noting that we do not use the geometric side of the trace formula, rather only use the two different expressions of the \emph{spectral side} given in \cref{prop:pre-trace} (the spectral side of the pre-trace formula) and \cref{prop:spectral side} (the spectral side of the trace formula); also see \cref{prop:spectral-side-full}. We also remark that we crucially use the spectral decomposition for a specific cuspidal datum. This is presumably different than a usual proof of, e.g., Weyl law (see e.g. \cite{finis2021remainder}, \cite{lindenstrauss2007existence}) where one does another sum over \emph{all} cuspidal data in addition to the averages in \cref{thm:BCF}. This extra average will not give us the result of the strength as in, e.g., \cref{thm:gln-bcf}. The primary reason is that the contribution from the cusp forms will dominate. This way we may only achieve a polynomial strength dependence on $\level(K)$ in \cref{thm:gln-bcf} (and \cref{rem:level-gln}) as opposed to a poly-logarithmic strength.

\subsection{On the proof of the applications}\label{subsec:proof of application}
The proofs of the applications do not use \cref{thm:BCF} or \cref{thm:unramified} directly but instead use very similar methods. The reason that we cannot use the theorems directly is explained in \cref{rem:thm3 problems}.

Instead, one can prove the theorems by proving a weighted version of \cref{thm:BCF}, where representations are weighted by how far they are from being tempered. Our method allows such a modification, by using a function $h_{R,q}$ on $\GL_n(\bbA)$ that is essentially the characteristic function of the $R$-radius ball at the archimedean places and of $K(q)$ at the non-archimedean places, where $K(q)$ is the principal congruence subgroup of level $q$ of $\GL_n$ over the finite adeles (see \cref{sec:complexity} below for the definition). This requires increasing $d(T)$ sufficiently slowly so that Arthur's formul{\ae} will still hold. 

In practice, we use the full distribution $J^T$ of Arthur, over the entire spectrum. The idea is to compare two different formul{\ae} of Arthur for $J^T(h_{R,q})$. On the one hand, the spectral side of the trace formula allows us to give a bound on $J^T(h_{R,q})$, given in \cref{prop:assing-blomer}. This is done by using the bound on $\mathcal{M}_L^T(\lambda,P)$ given in \cref{prop:calM bound} to reduce the problem to a sum which is bounded in the work of Assing--Blomer \cite[Theorem~7.1]{assing2022density}.

The other formula for $J^T(h_{R,q})$, which requires that $d(T)$ will be sufficiently large (actually depending logarithmically on $R$), can be seen as a pre-trace formula, and leads to a bound on the usual kernel of $h_{R,q}$, given in \cref{prop: AB reduction1}. The bound on the kernel of $h_{R,q}$ by standard methods (e.g, \cite{sarnak2015lettermiller,golubev2020sarnak}) leads to the proofs of both \cref{thm:counting} and \cref{thm:optimal-lifting}.

We finally remark that similar ideas, of using two formul{\ae} for $J^T(h)$, were recently used by Finis--Lapid to provide a remainder term for Weyl law for a large class of arithmetic non-compact locally symmetric spaces \cite{finis2021remainder}. However, \cite{finis2021remainder} compares the pre-trace formula with the \emph{geometric side} of the trace formula, not the spectral side.

\subsection{Added in proof}
After the completion of this article Assing, Blomer, and Nelson \cite{assing2024local} extended the results of \cite{assing2022density}. In particular, they show that in \cref{thm:counting} and \cref{thm:optimal-lifting} the assumption that $q$ is square-free can be removed.

\subsection{Conventions}
As usual in analytic number theory, we use $\epsilon$ to denote a small positive constant, whose actual value may change from line to line. We use Vinogradov's notations $\gg$ and $\ll$ whose implied constants are understood to depend only on the group $G$ and the number field $F$.

\section*{Acknowledgements}
The authors thank Erez Lapid for several fruitful discussions, his interest in this work, and his encouragement. The authors also thank Valentin Blomer for discussions relating to this work and \cite{assing2022density} and Paul Nelson for several useful comments on an earlier draft of this paper. The first author thanks Shreyasi Datta for various enlightening conversations and her immense support during this project. The first author also thanks MPIM Bonn where most of the work was completed during his stay there. The second author is supported by the ERC under the European Union's Horizon 2020 research and innovation programme (grant agreement No. 803711). Finally, the authors generously thank the anonymous referees for carefully reading the paper and suggesting numerous improvements.

\section{General Preliminaries}

Let $F$ denote a number field with integer ring $\mathfrak{o}$. The letter $v$ will usually (unless mentioned otherwise) denote an arbitrary place of $F$ and $F_v$ will denote the $v$-adic completion of $F$. Let $G$ denote a reductive group defined over $F$. Let $\bbA$ denote the adele ring of $F$. Also let $\bbA_f$ denote the finite adeles and $F_\infty := F\otimes\R$.

Below we briefly discuss the main ingredients and quantities in our proof. We primarily follow the notations of \cite[\S 2.1]{finis2011spectral-abs} and \cite[\S 4]{finis2015limit}.

\subsection{Brief structure theory of reductive groups}

Let $T_0$ denote a fixed maximal $F$-split torus of $G$. Let $M_0$ denote the centralizer of $T_0$ in $G$, which is a minimal Levi subgroup. Let $\calL$ denote the finite set of Levi subgroups defined over $F$ containing $M_0$. Given $M\in \calL$, let $\calL(M) \subset \calL$ denote the (finite) set of Levi subgroups containing $M$ and $\calP(M)$ denote the (finite) set of parabolic subgroups defined over $F$ with Levi subgroup $M$. We also fix a minimal parabolic subgroup $P_0 \supset M_0$. 

Let $T_M$ denote the split part of the identity component of the center of $M$. We denote $W(M) := N_{G}(M)/M$ where $N_G(M)$ denotes the normalizer of $M$ in $G$. We denote the Weyl group of $(G,T_0)$ by $W_0 := W(M_0)$. We can identify $W(M)$ as a subgroup of $W_0$. 

Let $\a_M^*$ denote the $\R$-vector space spanned by the lattice $X^*(M)$ of $F$-rational characters of $M$, and let $\a_{M,\C}^* := \a_M^*\otimes_\R \C$. For $M_1 \subset M_2$ it holds that $\a_{M_2}^* \subset \a_{M_1}^*$ with a well-defined complement $\a_{M_1}^* = \a_{M_2}^*\oplus (\a_{M_1}^{M_2})^*$. Let $\a_M$ denote the dual space of $\a_M^*$ and we fix a pairing $\langle,\rangle$ between $\a_M$ and $\a_M^*$. For $P\in \calP(M)$ we write $\a_P =\a_M$.

We choose a $W_0$-invariant inner product on $\a_0:= \a_{M_0}$, which also defines measures on $\a_M$ and $\a_M^*$ for all $M\in \calL$. We normalize all measures as in \cite[\S 7]{arthur2005intro}, similar to \cite{finis2011spectral-abs}.

For $P\in \calP(M)$, we write $M_P=M$. Let $N_P$ denote the unipotent radical of $P$. Let $\Sigma_P \subset (\a_{P}^{G})^*$ denote the set of roots of $T_M$ on the Lie algebra of $N_P$. Let $\Delta_P$ denote the subset of simple roots of $\Sigma_P$. Let $\Sigma_M := \cup_{P\in \calP(M)}\Sigma_P$. Given two parabolics $P,Q\in \calP(M)$ we say that $P$ and $Q$ are \emph{adjacent along $\alpha \in \Sigma_M$} and write $P|^{\alpha} Q$ if $\Sigma_P = \Sigma_Q \setminus \{-\alpha\} \cup\{\alpha\}$.

Let $A_0$ denote the identity component of $T_0(\R)$. We let $A_M := A_0 \cap T_M(\R)$. Let 
$H_M\colon  M(\bbA)\to \a_M$
denote the natural homomorphism defined by 
\[
e^{\langle \chi, H_M(m) \rangle} := |\chi(m)|_\bbA = \prod_v |\chi(m_v)|_v
\]
for any $\chi \in X^*(M)$. Here $|\cdot|_\bbA$ denotes the homomorphism from $\bbA^\times\to\R^+$ given by $|\cdot|_\bbA:=\prod_v|\cdot|_v$ where $|\cdot|_v$ denotes the usual absolute value on $F_v$. Let $M(\bbA)^1 \subset M(\bbA)$ denote the kernel of $H_M$. It holds that $M(\bbA) = A_M \times M(\bbA)^1$.

We fix a maximal compact subgroup $K_\infty$ of $G(F_\infty)$ and a maximal open-compact subgroup $K_f$ of $G(\bbA_f)$. Finally, we fix the maximal compact subgroup $K_\bbA := K_\infty K_f$ of $G(\bbA)$, and assume that it is admissible relative to $M_0$, so that for every $P\in \calP(M_0)$ an Iwasawa decomposition $G(\bbA) = P(\bbA)K_\bbA$ holds.

Given $M\in \calM$ and $P\in \calP(M)$, let $H_P\colon G(\bbA) \to \a_P$ denote the extension of $H_M$ to a left $N_P(\bbA)$ and right $K_\bbA$-invariant map.

\section{Global Preliminaries}

\subsection{Automorphic forms}\label{sec:automorphic-forms}

Given $M\in \calL$, let 
\[
L^2_{\disc}(M(F) \backslash M(\bbA)^1) \cong L^2_{\disc}(M(F)A_M \backslash M(\bbA))
\]
denote the discrete part of the spectrum of $L^2(M(F)A_M\backslash M(\bbA))$, i.e. the Hilbert sum of all irreducible subrepresentations of $M(\bbA)$ on the space.

Let $\Pi_2(M)$ denote\footnote{In \cite{finis2011spectral-abs} the authors denote it by $\Pi_{\disc}(M(\bbA))$.} the countable set of equivalence classes of irreducible unitary representation of $M(\bbA)$ appearing in $L^2_{\disc}(M(F) \backslash M(\bbA)^1)$. Each element $\pi \in \Pi_2(M)$ is an abstract unitary representation of $M(\bbA)$. By abuse of notations, we consider each $\pi \in \Pi_2(M)$ also as a representation of $M(\bbA)^1$. 
By Flath's tensor product theorem (see \cite{flath1979decomposition}), each $\pi \in \Pi_2(M)$ can be considered as a tensor product of local representations $\pi = \otimes_v \pi_v$, where $\pi_v$ is a representation of $M(F_v)$. We denote the finite part of the representation $\otimes_{v<\infty}\pi_v$ by $\pi_f$, and the infinite part of the representation $\otimes_{v\mid \infty}\pi_v$ by $\pi_\infty$.

Let $P\in\calP(M)$ and $\calA^2(P)$ denote the set of automorphic forms $\varphi$ on $N_P(\bbA)M(F)\backslash G(\bbA)$ that are square-integrable, that is, $\varphi$ satisfies 
\[
\delta_P^{-1/2}\varphi(\cdot k) \in L^2_\disc(A_MM(F) \backslash M(\bbA)),\quad \forall k\in K_\bbA,
\]
where $\delta_P$ is the modular character attached to $P$ (in particular, $\varphi(amk)=\delta_P^{1/2}(a)\varphi(mk)$ for $a\in A_M$, $m\in M(\bbA)^1$, and $k\in K_{\bbA}$). Then
$\calA^2(P)$ is a pre-Hilbert space, with an inner product 
\[
\langle\varphi_1,\varphi_2\rangle_{\calA^2(P)} := \intop_{A_M M(F) N_P(\bbA) \backslash G(\bbA)} \varphi_1(g) \overline{\varphi_2(g)} \d g.
\]
Sometimes, we shorthand $\|\cdot\|_{\calA^2(G)}$ as $\|\cdot\|_2$.
Let $\overline{\calA^2(P)}$ denote the Hilbert space completion of $\calA^2(P)$. 
For $\pi \in \Pi_2(M)$ we will also denote $\calA^2_\pi(P)\subset \calA^2(P)$ to be the subspace of $\varphi$ such that $\delta_P^{-1/2}\varphi(\cdot k)$ lies in the $\pi$-isotypic subspace of $L^2_{\disc,\pi}(A_M M(F) \backslash M(\bbA))\subset L^2_\disc(A_M M(F) \backslash M(\bbA))$ for every $k\in K_\bbA$.

Given $\lambda \in i(\a_P^G)^*$, let $\rho(P,\lambda,\cdot)$ denote the unitary representation of $G(\bbA)$ on $\overline{\calA^2(P)}$, given by 
\[\rho(P,\lambda,y)(\varphi)(x) := \varphi(xy)e^{\langle \lambda, H_P(xy)-H_P(x) \rangle},\quad y\in G(\bbA).\]
The representation $\rho(P,\lambda,\cdot)$ is isomorphic to 
\[
\Ind_{P(\bbA)}^{G(\bbA)}\left(L^2_\disc(A_M M(F) \backslash M(\bbA))\otimes e^{\langle \lambda, H_M(\cdot) \rangle}\right);
\]
see \cite[\S 4.2]{finis2015limit}.
We remark that the action of every compact subgroup of $G(\bbA)$ on $\calA^2(P)$ does not depend on $\lambda$.

Recall that a \emph{cuspidal datum} $\chi$ is a $G(F)$-conjugacy class of pairs $(L,\sigma)$, where $L$ is a Levi subgroup of $G$ defined over $F$ and $\sigma$ is a cuspidal representation of $L(\bbA)$; see \cite[\S 12]{arthur2005intro} for a detailed discussion.
We have a coarse spectral decomposition
\[L^2_\disc(A_M M(F) \backslash M(\bbA)) = \widehat{\bigoplus}_{\chi}L^2_{\disc,\chi}(A_M M(F) \backslash M(\bbA));\]
see \cite[eq.(12.5)]{arthur2005intro}. Here and elsewhere $\widehat{\oplus}$ denotes a Hilbert space direct sum. We have a similar coarse spectral decomposition of $\calA^2(P)$; see \cite[p.66-67]{arthur2005intro}. Let $\calA^2_\chi(P)$ denote the $\chi$-part of this spectral decomposition. Finally, we also write 
\begin{equation*}
L^2_{\disc,\chi,\pi}(A_M M(F) \backslash M(\bbA)) := L^2_{\disc,\chi}(A_M M(F) \backslash M(\bbA)) \cap L^2_{\disc,\pi}(A_M M(F) \backslash M(\bbA))
\end{equation*}
and $\calA^2_{\chi,\pi}(P) := \calA^2_{\chi}(P) \cap \calA^2_{\pi}(P)$.

\subsection{Some discussion of the coarse spectral expansion}\label{sec:coarse-spectral-expansion}

Since this topic is less standard, for the convenience of the reader, we add here a discussion of the coarse spectral decomposition. 

We start with the group $\GL_n$, in which case the coarse spectral decomposition is well understood. Thanks to the multiplicity one theorem, each cuspidal representation $\sigma$ of a Levi subgroup $L$ appears with multiplicity one, i.e., 
\[
\dim \Hom_{L(\bbA)}(\sigma, L^2(A_L L(F) \backslash L(\bbA))) \le 1. 
\]

Next, let us describe some of the results of \cite{moeglin1989residual}. First, we recall that given a cuspidal representation $\sigma$ of $\GL_a(\bbA)$ and $b>0$, one may construct a Speh representation $\operatorname{Speh}(\sigma,b)$ of $\GL_{ab}(\bbA)$ which is the unique irreducible quotient of the induction of $\sigma^{\otimes b}$ from the Levi subgroup $\GL_a(\bbA)^b$ and twisted by the character $\GL_a(\bbA)^b\ni (g_1,g_2,\dots,g_b)\mapsto|\det(g_1)|^{(b-1)/2}|\det(g_2)|^{(b-3)/2}\dots|\det(g_b)|^{(1-b)/2}$. It holds that $\operatorname{Speh}(\sigma,b) \in \Pi_2(\GL_{ab})$, and each representation $\pi \in \Pi_2(\GL_n)$ is isomorphic to a unique Speh representation $\operatorname{Speh}(\sigma,b)$ for some $b\mid n$ and cuspidal representation $\sigma$ of $\GL_{n/b}(\bbA)$. Note that the $b=1$ case corresponds to the cuspidal representations in $\Pi_2(\GL_n)$.

Now, if $M$ is a Levi subgroup of $\GL_n$ of a parabolic $P$, for each $\pi \in \Pi_2(M)$ there is a unique cuspidal datum $\chi$ such that $\calA^2_{\chi,\pi}(P)\ne \{0\}$, and the multiplicity of $\pi$ in the space $L^2_\disc(A_MM(F) \backslash M(\bbA))$ is equal to $1$. More precisely, if $M \cong \GL_{n_1}\times\dots\times\GL_{n_k}$ is the standard Levi subgroup, and $\pi = \pi_1 \otimes \dots \otimes \pi_k$, then one can write (uniquely) $n_i = a_i b_i$, find a cuspidal representation $\sigma_i \in \Pi_2(\GL_{a_i})$ and write $\pi_i = \operatorname{Speh}(\sigma_i,b_i)$. In this case, let $L:=\GL_{a_1}^{b_1}\times\dots\times\GL_{a_k}^{b_k}$ and $\sigma := \sigma_1^{\otimes b_1}\otimes\dots\otimes\sigma_k^{\otimes b_k} \in \Pi_2(L)$ be cuspidal. If $\chi$ is the cuspidal datum corresponding to $(L,\sigma)$ then $\calA^2_{\chi,\pi}(P)\ne \{0\}$.

The procedure can be reversed, but one should be a bit careful about the fact that cuspidal datum is only defined up to equivalence. In general, given a Levi $L$ and $\sigma \in \Pi_2(L)$ cuspidal let $\chi:=(L,\sigma)$ be the cuspidal datum and $P$ be a standard parabolic containing $L$ as a Levi subgroup. Then there are at most $n_P:= |\sum_{P'\sim P} W(\a_{P},\a_{P'})|\le |W_0|=n!$ other pairs $(L',\sigma')$ of Levi subgroups $L'\supset M_0$ and cuspidal representations $\sigma'\in \Pi_2(L')$ in $\chi$; see \cite[\S12]{arthur2005intro}.
Given such a pair $(L,\sigma)$
and a Levi subgroup $L\subset M$, we get at most one residual representation in $L^2_{\disc,\chi}(A_M M(F) \backslash M(\bbA))$. All in all, there are at most $n!$ pairs of standard parabolics $P$ and $\pi\in \Pi_2(M_P)$ such that $\calA^2_{\chi,\pi}(P)\ne \{0\}$.



For groups that are different than $\GL_n$ the situation becomes much more involved. An interesting case is the group $G=G_2$. By \cite[Appendix III]{moeglin1995spectral}, for $\chi= (M_0,\triv)$ (the torus and the trivial representation of it), there are infinitely many different $\pi\in \Pi_2(G)$ such that
\[
L^2_{\disc,\chi,\pi}(A_G G(F) \backslash G(\bbA))\ne \{0\}.
\]
See also \cite{kim1996residual, zampera1997residal} for complete results about the residual spectrum in this case. 
Moreover, by \cite{gan2002cubic}, for some $\pi \in \Pi_2(G)$ there are two different cuspidal data $\chi_1,\chi_2$ such that  
\[
L^2_{\disc,\chi_i,\pi}(A_G G(F) \backslash G(\bbA))\ne \{0\},
\]
with $\chi_1$ associated with the subgroup $G$ and $\chi_2$ associated with a proper parabolic subgroup. Finally, the multiplicity of $\pi$ in $L^2_{\disc,\chi_1,\pi}(A_G G(F) \backslash G(\bbA))$ can be arbitrarily large.

\subsection{Eisenstein series}\label{sec:eisenstein-series}

Given $\varphi \in \calA^2(P)$ and $\lambda \in (\a_{P,\C}^G)^*$, we define a corresponding Eisenstein series, as
\[
\Eis(\varphi,\lambda)(g) := \sum_{\gamma \in P(F) \backslash G(F)} \varphi(\gamma g) e^{\langle \lambda, H_P(\gamma g)\rangle }.
\]
The above converges absolutely if $\lambda$ is sufficiently dominant\footnote{We call $\lambda$ to be \emph{dominant} (resp.\ \emph{sufficiently dominant}) if $\langle \lambda,\alpha\rangle\ge 0$ (resp.\ $\langle \lambda,\alpha\rangle$ are sufficiently positive) for all positive root $\alpha$}. One can then, via Langlands' deep work (see e.g.\ \cite{langlands1966eisenstein}), meromorphically continue $\Eis(\varphi,\lambda)$ to all $\lambda\in (\a_{P,\C}^G)^*$.

It holds that the Eisenstein series intertwines the representation $\rho(P,\lambda,\cdot)$. Namely, it holds that \[
\Eis(\varphi,\lambda)(gy) = \Eis(\rho(P,\lambda,y)\varphi,\lambda)(g),
\]
for $y\in G(\bbA)$.

\subsection{Truncation}\label{sec:truncation}

Recall the minimal parabolic subgroup $P_0 \in \calP(M_0)$. Given $T\in \a_{M_0}^G$, let 
\[
d(T):= \min_{\alpha \in \Delta_{P_0}}\{\alpha(T)\}.
\]
Given $T$ with $d(T)$ sufficiently large, Arthur (following Langlands; see \cite[\S 13]{arthur2005intro}) defined certain \emph{truncation operator} $\Lambda^T$ which acts on locally integrable functions on $G(F) \backslash G(\bbA)^1$.
We do not need the exact definition of $\Lambda^T$, but just the following properties of it.

Let $f$ be a locally integrable function on $G(F)\backslash G(\bbA)^1$.
\begin{enumerate}
    \item $\Lambda^Tf$ is rapidly decaying. In particular, if $\lambda \in (\a_{P,\C}^G)^*$ is away from any pole of $\Eis(\varphi,\lambda)$ then $\Lambda^T\Eis(\varphi,\lambda)$ is square-integrable.
    \item Given any compact $\Omega \subset G(F)\backslash G(\bbA)^1$ there is a $C(\Omega)>0$ such that for every $T\in\a_{M_0}^G$ with $d(T)>C(\Omega)$ it holds that $\Lambda^Tf\mid_\Omega=f\mid_\Omega$; see \cite[Lemma 6.2]{lapid2006fine}.
   \item When $\alpha(T)\le \alpha(T')$ for every $\alpha \in \Delta_{P_0}$ it holds that $\Lambda^{T'}\Lambda^{T}=\Lambda^T$ and $\|\Lambda^{T'}f\|_2\ge\|\Lambda^T f\|_2$; see \cite[\S3.3]{finis2021remainder}.
\end{enumerate}

\subsection{Intertwining operators}

We follow \cite[\S4.2]{finis2015limit} to define intertwining operators and to record their relevant properties.

For $P,Q \in \calP(M)$ and $\lambda\in\a^*_{M,\C}$ we define the standard \emph{intertwining operator}
\[
M_{Q|P}(\lambda)\colon \calA^2(P) \to \calA^2(Q)
\]
by
\begin{equation}\label{def-restricted-intertwiner}
(M_{Q|P}(\lambda)\varphi)(x) :=
\intop_{N_Q(\bbA)\cap N_P(\bbA) \backslash N_Q(\bbA)}\varphi(nx) e^{\langle \lambda, H_P(nx) - H_Q(x)\rangle} \d n.
\end{equation}
The above integral converges absolutely for sufficiently dominant $\lambda$. It can then be meromorphically continued for all $\lambda\in\a^*_{M,\C}$. It can be checked that $M_{Q\mid P}(\lambda)^{-1}=M_{P\mid Q}(\lambda)$ and for $\lambda\in i(\a_M^G)^*$ the operator $M_{Q\mid P}(\lambda)$ is unitary.

For each $\pi\in\Pi_2(M)$ we denote the 
restriction of $M_{Q\mid P}(\lambda)$ on $\calA^2_\pi(P)$ by $M_{Q\mid P}(\pi,\lambda):\calA^2_\pi(P)\to\calA^2_\pi(Q)$.
On the other hand, we can define an intertwiner from
\[M^{\mathrm{I}}_{Q\mid P}(\lambda,\pi):\ind_{P(\bbA)}^{G(\bbA)}\pi\to\ind_{Q(\bbA)}^{G(\bbA)}\pi\]
that can be densely defined on $K$-finite and $Z\left(\mathrm{Lie}(G(F_\infty))\otimes_\R\C\right)$-finite subspace of $\ind_{P(\bbA)}^{G(\bbA)}\pi$ as in \eqref{def-restricted-intertwiner} (first, for sufficiently dominant $\lambda$ then by meromorphic continuation).
Then we have a unique canonical isomorphism of $G(\bbA_f)\times\left(\mathrm{Lie}(G(F_\infty))\otimes_\R\C,K_\infty\right)$-modules (see \cite[\S4.2]{finis2015limit})
\[
j_P\,\colon\, \Hom_{M(\bbA)}\left(\pi,L^2(A_MM(F) \backslash M(\bbA))\right)\otimes \ind_{P(\bbA)}^{G(\bbA)}\pi \to \calA_\pi^2(P)
\]
(similarly, $j_Q$) that can be characterized by
\[j_Q\circ\left[\mathrm{Id}\otimes M^{\mathrm{I}}_{Q\mid P}(\lambda,\pi)\right]=M_{Q\mid P}(\lambda,\pi)\circ j_P;\]
see \cite[eq.(2.3)]{muller2002spectral} and discussion around it.

Endowing $\Hom_{M(\bbA)}\left(\pi,L^2(A_MM(F) \backslash M(\bbA))\right)$ with the inner product $\langle f_1,f_2\rangle = f_2^*f_1$, where $*$ denotes adjoint (notice that by Schur's lemma, $f_2^* f_1$ is a scalar), we note that $j_P$ is an isometry. 

Now in the rest of this subsection let $P|^\alpha Q$ for some $\alpha\in\Sigma_M$. In this case, $M_{Q\mid P}(\lambda)$ depends only upon $\langle \lambda,\alpha^\vee\rangle$. Let $\varpi \in \a_M^*$ be such that $\langle \varpi,\alpha^\vee\rangle=1$.
We have
\begin{equation}\label{factorization-intertwiner}
    M_{Q|P}(s\varpi)\mid_{\calA^2_\pi(P)} \circ j_P = n_\alpha(\pi,s) \cdot j_Q \circ \left(\mathrm{Id}\otimes R_{Q|P}(\pi,s)\right),
\end{equation}
for any $s\in\C$ avoiding poles of the operators on both sides of above.
Here $n_\alpha(\pi,s)$, a \emph{global normalizing factor}, is a meromorphic function in $s$. Moreover, if $s\in i\R$ then $|n_\alpha(\pi,s)|=1$. The operator 
\[R_{Q|P}(\pi,s):\ind_{P(\bbA)}^{G(\bbA)}\pi\to \ind_{Q(\bbA)}^{G(\bbA)}\pi\]
is a factorizable \emph{normalized intertwining operator}, so that 
\[R_{Q|P}(\pi,s)=\prod_v R_{Q|P}(\pi_v,s),\quad\pi = \otimes_v \pi_v.\]
Here $R_{Q\mid P}(\pi_v,s)\One_{K_v}=\One_{K_v}$ for almost all $v$. Moreover, if $s\in i\R$ then $R_{Q\mid P}(\pi,s)$ is unitary. See \cite[Theorem 21.4]{arthur2005intro} for details.

\subsection{Langlands inner product formula}

For $Q\in \calP(M)$ we let 
\[
\theta_Q(\Lambda) := v_{\Delta_Q}^{-1}\prod_{\beta \in \Delta_Q}\langle \Lambda,\beta^\vee \rangle,\quad \Lambda\in \a_{M_0,\C}^*
\]
where $v_{\Delta_Q}$ is the co-volume of the lattice in $\a_Q^G$ spanned by $\{\beta^\vee \mid \beta \in \Delta_Q\}$.

For $T\in\a_{M_0}^G$ we also define $Y_Q(T)$ as the projection to $\a_M^G$ of $t^{-1}(T-T_0)+T_0$, where $t \in W_0$ is an element such that $P_0 \subset tQ $, and $T_0 \in a_M^G$ is the element defined by Arthur in \cite[Lemma 1.1]{arthur1981trace} (also see the discussion around \cite[eq.(9.4)]{arthur2005intro}).

Let $\chi = (M,\pi)$ be a cuspidal datum and $\varphi,\varphi' \in \calA^2_{\chi,\pi}(P)$, namely, $\varphi$ and $\varphi'$ are cusp forms. In this case, Langlands, generalizing the classical Maass--Selberg relations obtained a formula for the inner product of two truncated Eisenstein series attached to $\varphi$ and $\varphi'$; see \cite[Proposition 15.3]{arthur2005intro}.
The formula involves sums over various Weyl elements and parabolic subgroups of quantities involving (twisted) intertwining operators, $\theta_Q$, and $Y_Q$. There is also an approximate version of the formula, due to Arthur in \cite{arthur1982inner}, when $\varphi$ or $\varphi'$ are not cuspidal. However, we do not need any of these formul{\ae} here, but only a special case that we describe below.

For any two parabolic subgroups $P,Q\supset P_0$ let $W(\a_P,\a_Q)$ be the set of distinct linear isomorphisms from $\a_P\subset\a_{P_0}$ to $\a_Q\subset \a_{P_0}$, induced by elements of $W_0$. For any $Q\in\calP(M)$ and $P_0\subset P$ there is a natural action of $W(\a_P,\a_Q)$ on $\Pi_2(M)$. Following \cite[\S15]{arthur2005intro}, we call $\chi$ \emph{unramified} if for every $(M,\pi)\in\chi$ the stabilizer of $\pi$ in $W(\a_P,\a_P)$ is trivial. We warn again that this notion should not be confused with the unramifiedness of representations of reductive groups over local fields.

In the unramified case, we record the Langlands inner product formula.
\begin{lemma}\label{lem:Maass-Selberg unram}
Let $\chi:=(M,\pi)$ be any unramified cuspidal datum and $\varphi,\varphi'\in\calA_{\chi,\pi}^2(P)$. Then we have
\begin{equation*}
 \langle \Lambda^T \Eis(\varphi,\lambda), 
\Lambda^T \Eis(\varphi',\lambda) \rangle_{\calA^2(G)} = 
\lim_{\Lambda \to 0} \sum_{Q\in \calP(M)} e^{\langle \Lambda, Y_Q(T) \rangle}\frac{\langle M_{Q|P}(\lambda+\Lambda)\varphi,M_{Q|P}(\lambda)\varphi'\rangle_{\calA^2(Q)}}{\theta_Q(\Lambda)}.   
\end{equation*}
\end{lemma}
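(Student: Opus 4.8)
The plan is to derive \cref{lem:Maass-Selberg unram} as the specialization of Langlands' general inner product formula (as in \cite[Proposition 15.3]{arthur2005intro}) to the unramified situation. First I would recall that, since $\chi=(M,\pi)$ is a cuspidal datum with $\varphi,\varphi'\in\calA^2_{\chi,\pi}(P)$ being genuine cusp forms (i.e.\ $\pi$ is cuspidal on $M(\bbA)$), the general Maass--Selberg formula for the inner product of truncated Eisenstein series applies exactly, with no approximation term. In that formula the right-hand side is a sum over pairs of parabolic subgroups $P_1,P_2$ associate to $P$, over Weyl elements $s\in W(\a_P,\a_{P_1})$ and $t\in W(\a_P,\a_{P_2})$, of terms of the shape
\[
\frac{e^{\langle s\lambda - t\overline{\lambda}, \text{(truncation point)}\rangle}}{\theta(\ldots)}\langle M(s,\lambda)\varphi, M(t,\lambda)\varphi'\rangle,
\]
regularized by a limit as the spectral parameter degenerates.

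Second, I would invoke the hypothesis that $\chi$ is \emph{unramified}: the stabilizer of $\pi$ in $W(\a_P,\a_P)$ is trivial. This is precisely the condition that collapses the double sum over $(s,t)$ to a single diagonal sum. Indeed, $\langle M(s,\lambda)\varphi, M(t,\lambda)\varphi'\rangle$ is supported (as $\lambda$ is purely imaginary and the operators are unitary up to normalization) on the locus where $s$ and $t$ differ by an element stabilizing $\pi$; unramifiedness forces $s=t$, leaving a sum indexed by $Q\in\calP(M)$ (identifying $W(\a_P,\a_Q)$-orbits with parabolics in $\calP(M)$ once $\pi$ has trivial stabilizer). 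Under this identification the Weyl-element intertwiners $M(s,\lambda)$ become the standard intertwining operators $M_{Q\mid P}(\lambda)$ defined in \S2.7, the combinatorial denominators become $\theta_Q(\Lambda)$, and the exponents become $\langle\Lambda, Y_Q(T)\rangle$ with $Y_Q(T)$ as defined via Arthur's point $T_0$ in the preliminaries. Setting $\Lambda$ to be the degeneration direction and taking $\Lambda\to 0$ gives exactly the displayed formula.

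Third, I would need to check the bookkeeping carefully: that the normalization of Haar measures and of $\theta_Q$ used here matches Arthur's, that $Y_Q(T)$ is indeed the projection to $\a_M^G$ of $t^{-1}(T-T_0)+T_0$ appearing in the general formula, and that the regularizing limit in Langlands' formula is literally the $\Lambda\to 0$ limit written in the statement (the individual summands have poles along the walls $\langle\Lambda,\beta^\vee\rangle=0$, but the full sum is regular, which is what makes the limit meaningful). One should also note that $M_{Q\mid P}(\lambda+\Lambda)$ is meromorphic in $\Lambda$ near $0$ and $M_{Q\mid P}(\lambda)$ is a fixed unitary operator, so the only singularities come from $\theta_Q(\Lambda)^{-1}$, matching the classical $\GL(2)$ computation in \cref{sec:difficulties} as a sanity check.

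The main obstacle is the second step: making precise why unramifiedness exactly diagonalizes the double sum and why the surviving index set is canonically $\calP(M)$ with the standard (rather than Weyl-twisted) intertwiners $M_{Q\mid P}(\lambda)$. This requires unwinding the relationship between the abstract intertwining operators $M(s,\lambda)$ attached to Weyl elements and the geometric operators $M_{Q\mid P}(\lambda)$, and verifying that on the $\chi,\pi$-isotypic subspace, when the stabilizer is trivial, cross terms $s\ne t$ vanish identically rather than merely in some averaged sense. Everything else --- the passage from the general Maass--Selberg relation to the special case, the identification of $\theta_Q$ and $Y_Q(T)$, and the regularity of the limit --- is essentially unwinding definitions and citing \cite[\S15]{arthur2005intro}.
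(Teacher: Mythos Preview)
Your proposal is correct and follows the same underlying route as the paper: the formula is obtained by specializing Langlands' inner product formula \cite[Proposition~15.3]{arthur2005intro} to the case where the stabilizer of $\pi$ is trivial, so that the double Weyl sum collapses to a single sum over $Q\in\calP(M)$ with the standard intertwiners $M_{Q|P}$. The paper does not reproduce this derivation but simply cites \cite[pp.~185]{finis2011spectral-abs}, where exactly this computation is carried out, together with the observation that in their notation $M_{Q|P}(1,\lambda)=M_{Q|P}(\lambda)$ (the identity Weyl element gives the untwisted operator).
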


\begin{proof}
We refer to \cite[pp.185]{finis2011spectral-abs} and note that $M_{Q\mid P}(1,\lambda)$ in this reference is the same as $M_{Q\mid P}(\lambda)$.
\end{proof}

\section{Local Preliminaries}
\subsection{Parameters associated with a representation}\label{sec:complexity}

Let $M\in\calL$. The goal of this subsection is to associate various parameters measuring the complexity of a certain representation $\pi \in \Pi_2(M)$.

\subsubsection{Non-archimedean complexity}

First, following \cite[\S 5.1]{finis2015limit}, for any open-compact subgroup $K\subset K_f$ we recall the notion $\level(K)$. Fix $\iota:G\to \GL(V)$ a faithful $F$-rational representation and an $\frako$-lattice $\Lambda\subset V$ so that the stabilizer of $\hat{\frako}\otimes \Lambda\subset \bbA_f\otimes V$ in $G(\bbA_f)$ is $K_f$. Then for any ideal $\frakq\subset\frako$ we define the \emph{principal congruence subgroup of level $\frakq$} by
\[K({\frakq}):=\{k \in K_f\mid \iota(k)v \equiv v \mod(\frakq(\hat{\frako}\otimes\Lambda)), v\in \hat{\frako}\otimes\Lambda\}.\]
Now we define $\level(K)$ to be the index $[\frako:\frakq_K]$ where $\frakq_K\subset \frako$ be the largest ideal $\frakq$ such that $K\supset K({\frakq})$. On the other hand, given a representation $\pi_f$ of $G(\bbA_f)$, we define $\level(\pi_f)$ as the the minimum over $\level(K)$, such that $\pi_f$ has a non-trivial $K$-invariant vector.\footnote{We remark that when $G=\GL_n$, the above definition of the level of a representation of is different than the \emph{usual} definition of the level which is defined with respect to the \emph{Hecke congruence subgroup} $K_0(\frakq)$; see \cite{jacquet1981conducteur}, also \cite[Lemma 5.6]{finis2015limit}.}

Given a representation $\pi_f$ of $M(\bbA_f)$ and $\alpha\in\Sigma_M$, we recall the quantity $\level_M(\pi_f,\hat{M}_\alpha^+)$, as defined in \cite[\S5.1-5.2]{finis2015limit}, which measures the complexity of $\pi_f$. In this paper, we do not need to know $\level_M(\pi_f,\hat{M}_\alpha^+)$ explicitly, but only an upper bound that we describe below.

\begin{lemma}\label{bound-of-level}
Let $K\subset K_f$ be an open-compact subgroup. Assume that $\pi\in \Pi_2(M)$ is such that $\calA^2_\pi(P)^{K} \neq \{0\}$ for some $P\in\calP(M)$, i.e., the parabolic induction of $\pi_f$ to $G(\bbA_f)$ has a $K$-invariant vector. Then we have
\[
\level_M(\pi_f,\hat{M}_\alpha^+)\le \level(K).
\]
\end{lemma}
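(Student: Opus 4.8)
\textbf{Proof plan for \cref{bound-of-level}.}

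The plan is to reduce the statement to the corresponding local fact at a single non-archimedean place and then use the definition of $\level$ in terms of principal congruence subgroups. First I would recall from \cite[\S5.1--5.2]{finis2015limit} the precise meaning of $\level_M(\pi_f,\hat{M}_\alpha^+)$: it is defined (in the notation of that paper) as a level measuring, roughly, how deep one must go in a filtration of $M(\bbA_f)$ by congruence subgroups before $\pi_f$ — or more precisely the relevant piece of $\pi_f$ cut out by the unipotent radical associated with $\alpha$ — acquires invariant vectors. The key structural point is that parabolic induction is compatible with taking invariants under congruence subgroups: if $\iota\colon G\to\GL(V)$ and the lattice $\Lambda$ are the data fixing $K_f$, then the analogous data for $M$ (obtained by restricting $\iota$ and intersecting, or by the construction in \cite{finis2015limit}) give principal congruence subgroups $K_M(\frakq)=K(\frakq)\cap M(\bbA_f)$, and the Iwasawa decomposition $G(\bbA_f)=P(\bbA_f)K_f$ lets one relate $K(\frakq)$-invariants of $\ind_{P(\bbA_f)}^{G(\bbA_f)}\pi_f$ to $K_M(\frakq)$-invariants of $\pi_f$.

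The main steps, in order, would be: (1) Let $\frakq=\frakq_K$ be the largest ideal with $K\supset K(\frakq)$, so $\level(K)=[\frako:\frakq]$. Since $\calA^2_\pi(P)^K\neq\{0\}$, a fortiori $\calA^2_\pi(P)^{K(\frakq)}\neq\{0\}$, i.e.\ the parabolic induction of $\pi_f$ has a nonzero $K(\frakq)$-fixed vector. (2) Using the Iwasawa decomposition and Frobenius reciprocity for induction from $P(\bbA_f)$, translate this into the statement that $\pi_f$ has a nonzero vector fixed by $K(\frakq)\cap M(\bbA_f)$ — essentially because a function in the induced space is determined by its restriction to $K_f$, and right $K(\frakq)$-invariance forces the corresponding $M(\bbA_f)$-vector to be $K_M(\frakq)$-invariant (one has to be a little careful that $K(\frakq)$ is normal, so that $K(\frakq)\cap M(\bbA_f)$ really is a principal congruence subgroup of $M$ with the same controlling ideal). (3) Invoke the definition/characterization of $\level_M(\pi_f,\hat M_\alpha^+)$ from \cite[\S5.2]{finis2015limit}: the quantity $\level_M(\pi_f,\hat M_\alpha^+)$ is bounded by the level of any congruence subgroup of $M(\bbA_f)$ fixing a vector of $\pi_f$ (this is really just the monotonicity built into its definition, together with the fact that it is a "one-sided" or $\alpha$-restricted version of $\level_M(\pi_f)$, hence no larger than $\level_M(\pi_f)$). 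Combining (2) and (3) gives $\level_M(\pi_f,\hat M_\alpha^+)\le [\frako:\frakq]=\level(K)$.

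I expect the main obstacle to be purely bookkeeping rather than conceptual: matching up the somewhat intricate notation of \cite[\S5.1--5.2]{finis2015limit} — in particular the precise normalization of $\level_M$ versus $\level_M(\cdot,\hat M_\alpha^+)$ and the choice of lattice/faithful representation defining congruence subgroups of $M$ versus those of $G$ — and verifying that $K(\frakq)\cap M(\bbA_f)$ is genuinely the principal congruence subgroup of $M$ of level dividing $\frakq$ (so that one does not lose a factor in passing from $G$ to $M$). Once the compatibility of the congruence-subgroup filtrations under the inclusion $M\hookrightarrow G$ and parabolic induction is pinned down, the inequality is immediate from monotonicity of $\level$ in the subgroup. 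This is exactly the kind of statement that \cite{finis2015limit} sets up its definitions to make transparent, so the proof should be short, essentially a pointer to the relevant lemmas there together with the Iwasawa-decomposition argument in step (2).
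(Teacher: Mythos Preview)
Your plan is reasonable and would likely go through, but it takes a more hands-on route than the paper. The paper's proof is a two-line citation: it invokes \cite[Lemma~5.12]{finis2015limit} directly to obtain
\[
\level_M(\pi_f,\hat{M}_\alpha^+)\le \level(K;G_M^+),
\]
where $\level(K;G_M^+)$ is the \emph{relative level} of $K$ with respect to a certain subgroup $G_M^+\subset G$ defined in \cite[\S5.1]{finis2015limit}, and then uses the general inequality $\level(K;G_M^+)\le\level(K)$ from the same section. In other words, the paper never passes through $\level_M(\pi_f)$ or an Iwasawa argument at all; Lemma~5.12 of \cite{finis2015limit} already packages exactly the implication ``$K$-invariant vector in the induced representation $\Rightarrow$ bound on $\level_M(\pi_f,\hat M_\alpha^+)$'' that your steps (2)--(3) aim to establish.

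Your approach instead factors the inequality as $\level_M(\pi_f,\hat M_\alpha^+)\le \level_M(\pi_f)\le \level(K)$. The second inequality is your Iwasawa/Frobenius step and is fine. The first inequality is where you should be careful: the quantity $\level_M(\pi_f,\hat M_\alpha^+)$ in \cite[\S5.2]{finis2015limit} is defined relative to a specific auxiliary subgroup (built from the root $\alpha$), not simply as a coarsening of $\level_M(\pi_f)$, so the monotonicity you invoke in step~(3) is not quite ``built into the definition'' in the way you suggest---it is essentially the content of \cite[Lemma~5.12]{finis2015limit} again, phrased on the $M$-side rather than the $G$-side. So your plan is not wrong, but what you describe as a bookkeeping check in step~(3) is in fact the substantive lemma you would end up citing anyway; the paper just cites it up front and skips the Iwasawa unpacking.
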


\begin{proof}
It follows from \cite[Lemma 5.12]{finis2015limit} that 
\[\level_M(\pi_f,\hat{M}_\alpha^+)\le \level(K;G_M^+),\]
where $G_M^+\subset G$ is a certain subgroup and $\level(K;G_M^+)$ denotes the relative level of $K$ respect to $G_M^+$ as defined in \cite[\S 5.1]{finis2015limit}. The lemma follows from the fact that $\level(K;G_M^+)\le \level(K)$; see \cite[\S 5.1]{finis2015limit}.
\end{proof}

\subsubsection{Archimedean complexity}

Let $\pi_\infty$ be a smooth representation of $M(F_\infty)$. 
Let $\alpha\in\Sigma_M$. We recall the quantity $\Lambda_M(\pi_\infty;\hat{M}_\alpha)$, as defined in \cite[\S 5.1-5.2]{finis2015limit}, which measures the complexity of $\pi_\infty$ as a representation of $M(F_\infty)$. In this paper, we do not need to know $\Lambda_M(\pi_\infty;\hat{M}_\alpha)$ explicitly, but only an upper bound that we describe below. 

Let $\nu(\pi_\infty)$ be the eigenvalue for $\pi_\infty$ of the Casimir operator of $M(F_\infty)$. Similarly, given $\tau\in\widehat{K_\infty}$, let $\nu(\tau)$ be the Casimir eigenvalue of $\tau$.

\begin{lemma}\label{bound-of-infinitesimal-character}
Let $\tau\in\widehat{K_\infty}$. Assume that $\pi\in\Pi_2(M)$ is such that $\calA_\pi^2(P)^{\tau}\neq \{0\}$ for some $P\in\calP(M)$, i.e., the parabolic induction of $\pi_\infty$ to $G(F_\infty)$ has a nonzero vector whose $K_\infty$-type is $\tau$. Then we have
\[\Lambda_M(\pi_\infty;\hat{M}_\alpha)\ll 1+\nu(\pi_\infty)^2+\nu(\tau)^2.\]
\end{lemma}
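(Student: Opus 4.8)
The plan is to reduce the bound on the complexity parameter $\Lambda_M(\pi_\infty;\hat M_\alpha)$ to a bound on the infinitesimal character of $\pi_\infty$, and then to control that infinitesimal character using the hypothesis that the parabolic induction of $\pi_\infty$ to $G(F_\infty)$ has a nonzero vector of $K_\infty$-type $\tau$. First I would recall from \cite[\S5.1-5.2]{finis2015limit} that $\Lambda_M(\pi_\infty;\hat M_\alpha)$ is defined in terms of the norm of the infinitesimal character of $\pi_\infty$ (restricted to the relevant subalgebra $\hat{\mathfrak m}_\alpha$), so that there is a bound of the shape $\Lambda_M(\pi_\infty;\hat M_\alpha)\ll 1+\|\mu_{\pi_\infty}\|^2$, where $\mu_{\pi_\infty}$ is the infinitesimal character. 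It therefore suffices to show $\|\mu_{\pi_\infty}\|^2\ll 1+\nu(\pi_\infty)^2+\nu(\tau)^2$, or even the weaker $\|\mu_{\pi_\infty}\|^2 \ll 1+\nu(\pi_\infty)+\nu(\tau)$, and then absorb into the squares.

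The key point is the relation between the Casimir eigenvalue $\nu(\pi_\infty)$ and the norm of the infinitesimal character. If $\pi_\infty$ had a fixed (bounded) $K_\infty\cap M$-type, the Harish-Chandra parametrization would give $\nu(\pi_\infty)=\|\mu_{\pi_\infty}\|^2-\|\rho_M\|^2$ up to the action of the Weyl group, so $\|\mu_{\pi_\infty}\|^2 \asymp 1+\nu(\pi_\infty)$ and we would be done with room to spare. The subtlety is that $\pi_\infty$ is a representation of $M(F_\infty)$ appearing discretely in $L^2$, and its own $K_\infty\cap M$-types are not a priori bounded; what is bounded is the $K_\infty$-type $\tau$ of a vector in the induced representation $\mathrm{Ind}_{P(F_\infty)}^{G(F_\infty)}\pi_\infty$. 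So the steps would be: (i) by Frobenius reciprocity, the existence of a $K_\infty$-type $\tau$ vector in the induced representation forces $\pi_\infty$ to contain a $K_\infty\cap M$-type $\tau'$ that occurs in the restriction $\tau|_{K_\infty\cap M}$, hence with $\nu(\tau')\ll \nu(\tau)$ (restriction of a $K_\infty$-representation to a subgroup only involves constituents of bounded Casimir); (ii) on the $\tau'$-isotypic vectors, compare the action of the Casimir of $M(F_\infty)$ with the Casimir of $K_\infty\cap M$ and the relevant abelian/nilpotent pieces, using the standard decomposition $\Omega_M = \Omega_{K_M} + (\text{terms})$, to deduce $\|\mu_{\pi_\infty}\|^2 \ll 1+\nu(\pi_\infty)+\nu(\tau')\ll 1+\nu(\pi_\infty)+\nu(\tau)$; (iii) feed this into the definition of $\Lambda_M(\pi_\infty;\hat M_\alpha)$.

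I expect the main obstacle to be step (ii): making precise and uniform the comparison between the Casimir eigenvalue of $\pi_\infty$ and the norm-squared of its infinitesimal character when the $K_\infty\cap M$-type is only controlled, not fixed — i.e.\ ruling out that a large infinitesimal character hides behind a large Casimir eigenvalue of the $K$-type with cancellation. This is essentially the statement that on a $\tau'$-isotypic subspace the difference $\Omega_M - (\text{Casimir of }K_M)$ is, up to bounded error, positive-definite in the noncompact directions, which is a standard but slightly delicate Lie-algebra computation; alternatively one can invoke the fact (as in the cited sections of \cite{finis2015limit}, and compatible with \cite{marshall2014upper}) that $\Lambda_M(\pi_\infty;\hat M_\alpha)$ was designed precisely to be controlled by $\nu(\pi_\infty)$ and the $K_\infty$-type, so that the lemma is really an unwinding of definitions plus Frobenius reciprocity. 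Either way, the quadratic shape $1+\nu(\pi_\infty)^2+\nu(\tau)^2$ on the right is generous enough that no sharp constants are needed.
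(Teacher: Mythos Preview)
Your approach is correct in outline and arrives at the same conclusion, but it is considerably more hands-on than the paper's proof. The paper simply invokes \cite[eq.~(10)--(11)]{finis2015limit}, which already packages the Lie-algebra computation you sketch in step (ii) and yields directly
\[
\Lambda_M(\pi_\infty;\hat M_\alpha)\ll \nu(\pi_\infty)^2+\min_{\tau'}\nu(\tau')^2,
\]
where $\tau'$ ranges over the \emph{lowest $K_\infty$-types} of $\Ind_{P(F_\infty)}^{G(F_\infty)}\pi_\infty$. Since $\tau$ occurs in the induced representation, the minimum is at most $\nu(\tau)^2$, and the lemma follows in one line.

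The main structural difference is where the $K$-type bookkeeping happens. You descend via Frobenius reciprocity to a $K_\infty\cap M$-type $\tau'$ of $\pi_\infty$ itself, and then compare Casimirs on $M$; the paper stays entirely at the level of $K_\infty$-types of the induced representation on $G$ and lets the cited inequality absorb the passage to $M$. Your route has the merit of being self-contained and explaining \emph{why} the bound holds, but step (ii) --- the uniform comparison $\|\mu_{\pi_\infty}\|^2\ll 1+\nu(\pi_\infty)+\nu(\tau')$ --- is exactly the content of the equations the paper cites, so you are re-deriving rather than bypassing that input. Your closing remark (``alternatively one can invoke\ldots'') is in fact precisely what the paper does.

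One small caution: your opening reduction $\Lambda_M(\pi_\infty;\hat M_\alpha)\ll 1+\|\mu_{\pi_\infty}\|^2$ presupposes that $\Lambda_M$ is bounded purely by the infinitesimal character. That is true, but it is not the definition; it is again part of what \cite[eq.~(10)--(11)]{finis2015limit} establishes. So either way you are leaning on that reference, and the paper's proof just does so more directly.
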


\begin{proof}
It follows from \cite[eq.(10)-(11)]{finis2015limit} that
\[\Lambda_M(\pi_\infty;\hat{M}_\alpha) \ll \nu(\pi_\infty)^2+\min_{\tau^\prime}\nu(\tau^\prime)^{2},\]
where $\tau'\in \widehat{K_\infty}$ runs over the lowest $K_\infty$-types of $\Ind_{P(F_\infty)}^{G(F_\infty)}\pi_\infty$. We conclude by noting that the lowest $K_\infty$-types have the smallest Casimir eigenvalues (equivalently, smallest infinitesimal characters).
\end{proof}

\subsection{The spherical function}\label{subsec:spherical function}
In the rest of this section, we only work over the archimedean places, i.e. $F_\infty$. See \cite[\S 3]{Duistermaat1979spectra} for references to the material below.

Let $G_\infty := A_G \backslash G(F_\infty)$ and $K_\infty$ be a maximal compact subgroup in $G_\infty$.
Choose a maximal $\R$-split torus $T_\infty \subset G_\infty$, which we may assume to contain $T_0(F_\infty)$ and contained in $M_0(F_\infty)$. Let $W_\infty:= N_{G_\infty}(T_\infty)/T_\infty$ be the Weyl group of $G_\infty$ relative to $T_\infty$.
Let $T_\infty^0$ be the connected component of the identity of $T_\infty$ and $\a_\infty$ be the Lie algebra of $T_\infty^0$. 

We remark that for any Levi $M\supset M_0$ the embedding $T_\infty \to M(F_\infty)$ defines a projection $\a_\infty \twoheadrightarrow \a_{M}^G$, and an embedding $(\a_{M}^G)^* \hookrightarrow \a_\infty^*$; see \cite[pp.118]{arthur2005intro}.

We let $M_{0,\infty}$ be the Levi subgroup which is the normalizer of $T_\infty$ in $G_\infty$. Let $B_\infty$ be a Borel subgroup containing $M_{0,\infty}$ as a Levi subgroup and also containing $P_0(F_\infty)$. Let $N_\infty$ be the unipotent radical of $B_\infty$. We have the Iwasawa decomposition $G_\infty = B_\infty K_\infty = N_\infty T_\infty^0 K_\infty$.

Using the exponential map from $\a_\infty$ to $T_\infty^0$, each element $\mu \in \a_{\infty,\C}^*:=(\a_\infty\otimes_\R\C)^*$ defines a character $\chi_\mu$ of $T_\infty$, which is trivial on $T_\infty/T_\infty^0$. It extends, via $N_\infty$-invariance, to a character of $B_\infty$.
Let $\delta_\infty\colon B_\infty \to \R_{>0}$ be the modular character of $B_\infty$, which is attached to the character $\chi_{2\rho_\infty}$, where $2\rho_\infty$ is the sum of all the roots of the action of $T_\infty$ on the Lie algebra of $N_\infty$, weighted by the dimensions of the corresponding root spaces.

Given $g \in G_\infty$, we let $a(g)$ be its $T_\infty^0$ part according to the Iwasawa decomposition $G_\infty = N_\infty T_\infty^0 K_\infty $. 
We define the \emph{spherical function} $\eta_\mu\colon G_\infty \to \C$ corresponding to $\mu \in \a_{\infty,\C}^*$ by 
\[
\eta_\mu(g) := \intop_{K_\infty}\chi_{\mu+\rho_\infty}(a(kg))\d k.
\]
Thus $\eta_\mu$ is bi $K_\infty$-invariant. Here $\d k$ denotes the probability Haar measure on $K_\infty$.

\subsection{Spherical representation}

We call an irreducible admissible representation $(\pi,V)$ of $G_\infty$ \emph{spherical} if $\pi$ has a non-zero $K_\infty$-invariant vector. We can construct all irreducible admissible spherical representations of $G_\infty$ from the unitarily induced principal series representations. Let $\mu\in\a_{\infty,\C}^*$ and $\ind_{B_\infty}^{G_\infty}\chi_\mu$ denotes the normalized parabolic induction of $\chi_\mu$ from $B_\infty$ to $G_\infty$. It is an admissible representation and has a unique irreducible spherical subquotient. Conversely, for any irreducible  spherical representation $(\pi,V)$ we can find a (unique up to $W_\infty$-action) $\mu_\pi\in\a_{\infty,\C}^*$ such that $\pi$ appears as the unique spherical subquotient of $\ind_{B_\infty}^{G_\infty}\chi_{\mu_\pi}$. In this case, we call $\mu_\pi$ the \emph{Langlands parameter} of $\pi$.  
When $\pi$ is also unitary it holds that $\|\Re(\mu_\pi)\| \le \|\rho_\infty\|$ and $1+\|\mu_\pi\|^2\asymp 1+ \nu(\pi)$ (more precisely, $\nu(\pi) = \|\rho_\infty\|^2- \|\Re(\mu_\pi)\|^2 + \|\Im(\mu_\pi)\|^2$); see \cite[eq.(3.17)]{Duistermaat1979spectra}.

If $\pi$ is spherical then the $K_\infty$-invariant vector of $\pi$ is unique up to multiplication by a scalar. Let $\One_{K_\infty}$ be the characteristic function on $K_\infty$. Then $\pi(\One_{K_\infty})$ is well-defined and acts as a projection onto the $K_\infty$-invariant subspace of $V$. Let $0\neq v\in\pi$ be a $K_\infty$-invariant vector. It follows from the definition of the spherical function that $\pi(\One_{K_\infty})\pi(g)v=\eta_{\mu_\pi}(g)v$.

Let $M_\infty$ be any Levi subgroup of $G_\infty$, having $T_\infty$ as a maximal split torus. We can associate to each irreducible admissible spherical (containing a non-zero $K_\infty \cap M_\infty$-invariant vector) representation $\pi$ of $M_\infty$ a Langlands parameter $\mu_\pi \in \a_{\infty,\C}^*$, defined up to $W_{M,\infty}=N_{M_\infty}(T_\infty)/T_\infty\subset W_\infty$. Moreover, if $Q_\infty$ is a parabolic subgroup of $G_\infty$ with $M_\infty$ as its Levi subgroup, $\Ind_{Q_\infty}^{G_\infty}\pi$ has a unique spherical subrepresentation. On the other hand, if $\Ind_{Q_\infty}^{G_\infty}\pi$ has a $K_\infty$-invariant vector then $\pi$ necessarily has a $K_\infty \cap M_\infty$-invariant vector. In any case, by the transitivity of induction, $\mu_\pi$ is also the Langlands parameter of the spherical subquotient of $\Ind_{Q_\infty}^{G_\infty}\pi$ (however, the new parameter is defined up to $W_\infty$). 

\subsection{The spherical transform}\label{sec:spherical-transform}

We call the convolution algebra on $C_c^\infty(K_\infty\backslash G_\infty / K_\infty)$ the \emph{spherical Hecke algebra} of $G_\infty$. For $h \in C_c^\infty(K_\infty \backslash G_\infty / K_\infty)$, we let $\tilde{h}\colon \a_{\infty,\C}^* \to \C$ to be the \emph{spherical transform} of $h$, defined by
\[
\tilde{h}(\mu) := \intop_{G_\infty } h(g) \eta_\mu(g) \d g.
\]
If $\pi$ is spherical and $0\neq v\in\pi$ is a $K_\infty$-invariant vector then it holds that
\[\pi(h)v = 
\intop_{G_\infty} h(g) \pi(g) v \d g = \tilde{h}(\mu_\pi)v.
\]

We define the \emph{Abel--Satake transform} (also known as \emph{Harish-Chandra transform}) to be the map $\calS:C_c(K_\infty \backslash G_\infty / K_\infty)\to C_c(T_\infty)$ defined by
\[
h\mapsto \calS h : a\mapsto \delta_\infty(a)^{1/2} \intop_{N_\infty}h(an)\d n,
\]
where $\d n$ is a Haar measure on $N_\infty$.
Since $\calS h$ is left-$K_\infty \cap T_\infty$-invariant, it is actually a map on $T_\infty^0$.
We have the exponential map $\exp\colon\a_\infty\to T_\infty^0$ which gives an identification of $\a_\infty$ with $T_\infty^0$. So we may as well realize $\calS h \in C_c(\a_\infty)$
after pre-composing with the $\exp$ map.

It holds that $\calS f$ is $W_\infty$-invariant. In fact, Gangolli showed that \[\calS\colon C_c^
\infty(K_\infty \backslash G_\infty / K_\infty) \to C_c^\infty(\a_\infty)^{W_\infty}\]
is an isomorphism of topological algebras; see \cite[3.21]{Duistermaat1979spectra} (also see \cite{gangolli1971plancherel}).
Let us denote the Fourier--Laplace transform $C_c(\a_\infty)\to C(\a_{\infty,\C}^*)$ by the map
\[h\mapsto \hat{h}:\mu\mapsto \intop_{\a_\infty} h(\alpha) e^{\langle\mu, \alpha\rangle}\d\alpha,\]
Harish-Chandra showed that the spherical transform $\tilde{h} = \widehat{\calS(h)}$. Here $\d\alpha$ denotes the Lebesgue measure inherited from the $\R$-vector space structure of $\a_\infty$.

\subsection{The Paley--Wiener Theorem}

Recall the Abel--Satake transform $\calS$ above. 
Gangolli (see \cite[eq.3.22]{Duistermaat1979spectra}) proved that if $h\in C_c^\infty(\a_\infty)^{W_\infty}$ is such that $\supp (h) \subset \{\alpha \in \a_\infty\mid\|\alpha\|\le b\}$, then 
\begin{equation}\label{supp-condition}
    \supp (\calS^{-1} h) \subset K_\infty \{\exp \alpha\mid\alpha \in \a_\infty, \|\alpha\|\le b\}K_\infty.
\end{equation}

We also record the classical Paley--Wiener theorem, which implies that for $h\in C_c^\infty(\a_\infty)^{W_\infty}$ with support in $\{\alpha \in \a_\infty\mid\|\alpha\|\le b\}$ we have
\begin{equation}\label{paley-wiener-thm}
    |\hat{h}(\mu)|\ll_{N,h} \exp(b\|\Re (\mu) \|)(1+\|\mu\|)^{-N}
\end{equation}
for every $N\ge 0$.

\section{A Bound on an Intertwining Operator}

In this section we prove \cref{thm:unramified} and provide one of the main ingredients of the proof of \cref{thm:BCF}.

First, we record the required estimates of the logarithmic derivatives of the normalizing factors and the normalized intertwining operators, denoted by properties (TWN+) and (BD), respectively.

\subsection{Property (TWN+)}\label{sec:twn+}

In \cite[Definition 3.3]{finis2017analytic-global}\footnote{The quantities $\Lambda(\pi_\infty;p^{\mathrm{sc}})$ and $\level(\pi,p^{\mathrm{sc}})$ are denoted by $\Lambda(\pi_\infty;\hat{M}_\alpha)$ and $\level(\pi_f;\hat{M}^+_\alpha)$, respectively, in this article, following \cite{finis2015limit}.}, the authors defined the \emph{Tempered Winding Number (TWN+) property, strong version}, of a reductive group. Property (TWN+) is a global property that concerns the size of the logarithmic derivative of $n_\alpha(\pi,s)$.
Below we record an implication of property (TWN+) which we will use in this article.

\begin{lemma}\label{twn+}
Let $G$ satisfy property (TWN+). Let $M\in\calL$ be proper, $\pi\in\Pi_2(M)$, and $\alpha \in \Sigma_M$. Let $K\subset K_f$ be open-compact and $\tau\in\widehat{K_\infty}$ so that $\calA^2_\pi(P)^{\tau,K}\neq \{0\}$ for some $P\in\calP(M)$.
%
Then it holds that
\[
\intop_{t_0}^{t_0+1} \left|\frac{n'_\alpha(\pi,it)}{n_\alpha(\pi,it)}\right| \d t \ll \log \left(1+|t_0|+\nu(\pi_\infty)+\nu(\tau)+\level(K)\right),
\]
for any $t_0\in\R$.
\end{lemma}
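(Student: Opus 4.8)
The plan is to derive this estimate from property (TWN+) as formulated by Finis--Lapid--M\"uller, reducing the integral of the logarithmic derivative to the quantities $\Lambda_M(\pi_\infty;\hat{M}_\alpha)$ and $\level_M(\pi_f,\hat{M}_\alpha^+)$ that control the tempered winding number, and then bounding those two quantities in terms of $\nu(\pi_\infty)$, $\nu(\tau)$, and $\level(K)$ using \cref{bound-of-infinitesimal-character} and \cref{bound-of-level} respectively. Concretely, I would first recall the precise statement of property (TWN+) from \cite[Definition 3.3]{finis2017analytic-global}: it asserts a bound of the shape
\[
\intop_{t_0}^{t_0+1}\left|\frac{n'_\alpha(\pi,it)}{n_\alpha(\pi,it)}\right|\d t \ll \log\left(1+|t_0|+\Lambda_M(\pi_\infty;\hat M_\alpha)+\level_M(\pi_f;\hat M_\alpha^+)\right),
\]
valid for all $\pi\in\Pi_2(M)$ and $\alpha\in\Sigma_M$, with implied constant depending only on $G$. (One should double-check whether the definition is phrased for the full normalizing factor or for its finite/archimedean constituents separately; in the latter case one splits $n_\alpha = n_{\alpha,\infty}\cdot n_{\alpha,f}$ and treats the two logarithmic derivatives additively, the finite part contributing a $\log\level$ term and the archimedean part a $\log(1+|t_0|+\Lambda_M)$ term.)

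Next I would feed in the hypothesis $\calA^2_\pi(P)^{\tau,K}\neq\{0\}$. This says precisely that the parabolic induction of $\pi_f$ to $G(\bbA_f)$ has a nonzero $K$-invariant vector and that the parabolic induction of $\pi_\infty$ to $G(F_\infty)$ has a nonzero vector of $K_\infty$-type $\tau$. Therefore \cref{bound-of-level} applies and gives $\level_M(\pi_f,\hat M_\alpha^+)\le \level(K)$, while \cref{bound-of-infinitesimal-character} applies and gives $\Lambda_M(\pi_\infty;\hat M_\alpha)\ll 1+\nu(\pi_\infty)^2+\nu(\tau)^2$. Substituting these two bounds into the (TWN+) estimate and using $\log(1+x^2)\ll 1+\log(1+x)$ and the subadditivity of $\log(1+\cdot)$ under sums collapses the right-hand side to $\ll \log(1+|t_0|+\nu(\pi_\infty)+\nu(\tau)+\level(K))$, which is exactly the claimed bound.

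The genuinely substantive content here is external: it is property (TWN+) itself, whose proof is the deep work of Finis--Lapid--M\"uller, and which the lemma simply takes as a hypothesis. So within the scope of this paper there is no serious obstacle; the only care required is bookkeeping — making sure that the normalization of the quantities $\Lambda_M$ and $\level_M$ in \cite{finis2017analytic-global} matches the one in \cite{finis2015limit} used in \cref{bound-of-infinitesimal-character} and \cref{bound-of-level} (this is flagged in the footnote to \cref{sec:twn+}), and making sure the hypothesis $M$ proper and $\alpha\in\Sigma_M$ genuinely nonempty is used to invoke those two lemmas. The mild annoyance, if any, is that the $K_\infty$-type $\tau$ enters \cref{bound-of-infinitesimal-character} through the lowest $K_\infty$-types of $\Ind_{P(F_\infty)}^{G(F_\infty)}\pi_\infty$ rather than through $\tau$ directly, so one has to observe that if this induction contains the $K_\infty$-type $\tau$ then its lowest $K_\infty$-type $\tau'$ satisfies $\nu(\tau')\ll 1+\nu(\tau)$, which follows from the standard comparison of Casimir eigenvalues of $K_\infty$-types appearing in a fixed induced representation (e.g. highest-weight considerations). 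With that in hand the proof is a two-line substitution.
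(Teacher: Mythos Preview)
Your proposal is correct and follows exactly the paper's approach: invoke the (TWN+) inequality from \cite[Definition 3.3]{finis2017analytic-global} and feed in \cref{bound-of-level} and \cref{bound-of-infinitesimal-character}. Your closing concern about lowest $K_\infty$-types is unnecessary here, since \cref{bound-of-infinitesimal-character} is already stated with the conclusion in terms of $\nu(\tau)$ (that reduction happens inside its proof, not in the present lemma).
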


\begin{proof}
Note that $|n_\alpha(\pi,it)|=1$. Now the proof follows immediately by applying \cref{bound-of-level} and \cref{bound-of-infinitesimal-character} to the inequality in \cite[Definition 3.3]{finis2017analytic-global}.
\end{proof}

Finally, we keep a record that property (TWN+) is not vacuous.

\begin{prop}\label{gln-twn}\cite[Theorem 3.11]{finis2017analytic-global}.
Property (TWN+) holds for $G$ being $\GL_n$ and its inner form, a quasi-split classical group\footnote{The proof of \cite{finis2017analytic-global} for the classical groups crucially depends on the endoscopic classification of automorphic representations, due to Arthur \cite{arthur2013endoscopic} and Mok \cite{mok2015endoscopic}.}, and $G_2$. Moreover, it holds for any group whose derived group coincides with the derived group of any group mentioned above.
\end{prop}

\subsection{Property (BD)}\label{sec:bd}

In \cite[Definition 1-2]{finis2019analytic-local}, the authors defined the \emph{Bounded Degree (BD) property} of a reductive group. Property (BD) is a local property that concerns the size of the logarithmic derivative of $R_{Q\mid P}(\pi_v,s)$. Below we record an implication of property (BD) which we will use in this article.

\begin{lemma}\label{bd}
Let $G$ satisfy property (BD) at every place. Let $M\in\calL$ be proper, $\pi\in \Pi_2(M)$, and $P,Q\in\calP(M)$ be adjacent. Let $K\subset K_f$ be open-compact and $\tau\in\widehat{K_\infty}$ so that $\calA^2_\pi(P)^{\tau,K}\neq \{0\}$. Then it holds that
\[
\intop_{t_0}^{t_0+1} \left\|R_{Q|P}(\pi,it)^{-1} R_{Q|P}^\prime(\pi,it)\vert_{\left(\ind_{P(\bbA)}^{G(\bbA)}\pi\right)^{\tau,K}}\right\|_{\op} \d t \ll \log \left(1+\nu(\tau)+\level(K)\right),
\]
for any $t_0\in\R$.
\end{lemma}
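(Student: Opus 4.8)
The statement is the direct local analogue of \cref{twn+}, so the strategy is to reduce the claimed bound to the defining inequality of property (BD) in \cite[Definition 1-2]{finis2019analytic-local}, exactly as the proof of \cref{twn+} reduces to the defining inequality of (TWN+). First I would recall that property (BD) bounds the integral over a unit interval of the operator norm of the logarithmic derivative $R_{Q|P}(\pi_v,s)^{-1}R_{Q|P}'(\pi_v,s)$ at each place $v$, restricted to a fixed $K_v$-type, in terms of the local complexity parameters $\level_M(\pi_v,\hat M_\alpha^+)$ (at finite $v$) and $\Lambda_M(\pi_{\infty};\hat M_\alpha)$ (at the archimedean place). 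Since $R_{Q|P}(\pi,s)=\prod_v R_{Q|P}(\pi_v,s)$ is a factorizable operator and each local factor is unitary on the unitary axis $s\in i\R$, the logarithmic derivative of the global operator is the sum of the local logarithmic derivatives, and on the subspace $(\ind_{P(\bbA)}^{G(\bbA)}\pi)^{\tau,K}$ only finitely many places contribute non-trivially (those $v$ with $K_v$ not hyperspecial, together with the archimedean place), each contributing a bounded-norm operator. Thus by the triangle inequality for the operator norm and sub-additivity under the integral, the left-hand side is at most the sum over this finite set of places of the corresponding local integrals.

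\textbf{Key steps in order.} (1) Write the global normalized intertwining operator as the product of local ones and note that on $i\R$ each local factor, hence the product, is unitary, so the logarithmic derivative is a genuine (anti-self-adjoint) operator and behaves additively under the factorization: $R_{Q|P}^{-1}R_{Q|P}' = \sum_v \bigl(R_{Q|P}(\pi_v,\cdot)^{-1}R_{Q|P}'(\pi_v,\cdot)\bigr)$, acting factor-wise. (2) Restrict to $(\ind_{P(\bbA)}^{G(\bbA)}\pi)^{\tau,K}$: for almost all $v$ the operator $R_{Q|P}(\pi_v,s)$ fixes $\One_{K_v}$ identically, so its derivative vanishes on the $K_v$-fixed line; the finitely many remaining finite places are those dividing $\level(K)$, and at the archimedean place we are on the $\tau$-isotypic part. (3) Apply property (BD) at each of these places to bound $\int_{t_0}^{t_0+1}\|R_{Q|P}(\pi_v,it)^{-1}R_{Q|P}'(\pi_v,it)\|_{\op}\,\d t$ by $\log(1+\Lambda_M(\pi_{v};\hat M_\alpha))$ or $\log(1+\level_M(\pi_v,\hat M_\alpha^+))$ as appropriate. (4) Invoke \cref{bound-of-level} to bound $\level_M(\pi_f,\hat M_\alpha^+)\le\level(K)$ and \cref{bound-of-infinitesimal-character} to bound $\Lambda_M(\pi_{\infty};\hat M_\alpha)\ll 1+\nu(\pi_\infty)^2+\nu(\tau)^2$; note however that the archimedean contribution only involves the $\tau$-type and not $\pi_\infty$ — since $R_{Q|P}(\pi_\infty,s)$ acting on the $\tau$-isotypic subspace of the induced representation has derivative controlled purely by $\nu(\tau)$ via the local (BD) estimate, which is why $\nu(\pi_\infty)$ does not appear in the stated bound (here one should be a touch careful and use that the relevant complexity parameter for the archimedean factor restricted to a fixed $K_\infty$-type is governed by $\nu(\tau)$ alone, cf.\ \cite[\S5]{finis2015limit}). (5) Sum the $O(1)$ many local logarithms and absorb constants and the $\log^2$ versus $\log$ discrepancy (the squares in \cref{bound-of-infinitesimal-character} only cost a factor of $2$ inside the logarithm) to reach $\ll\log(1+\nu(\tau)+\level(K))$.

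\textbf{Main obstacle.} The routine part is the factorization and the triangle inequality; the genuinely delicate point is step (4)--(5), namely pinning down exactly which complexity parameter controls the archimedean factor once we restrict to a fixed $K_\infty$-type $\tau$, and verifying that it does not bring in $\nu(\pi_\infty)$. This requires unwinding the definitions of $\Lambda_M(\pi_\infty;\hat M_\alpha)$ and the way property (BD) is stated at the archimedean place in \cite{finis2019analytic-local}, and checking that for the normalized (as opposed to unnormalized) intertwining operator the archimedean bound on a fixed $\tau$-isotypic piece is a function of $\nu(\tau)$ only. The other mild subtlety is ensuring the set of finite places where $R_{Q|P}(\pi_v,\cdot)$ is non-trivial on the $K_v$-fixed vectors is controlled by $\level(K)$ — this is precisely the content of \cref{bound-of-level} together with the choice of $K_f$ as a good maximal compact — so the number of contributing places and their individual level bounds are both under control.
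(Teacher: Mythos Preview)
Your outline has the right skeleton — factorize $R_{Q|P}(\pi,s)=\prod_v R_{Q|P}(\pi_v,s)$, reduce to local bounds, and sum — and this matches the paper. But the two substantive steps you flag as ``routine'' or ``black-box'' are precisely where the actual work lies, and your sketch does not do that work.

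First, property (BD) is \emph{not} the statement that $\int_{t_0}^{t_0+1}\|R_{Q|P}(\pi_v,it)^{-1}R_{Q|P}'(\pi_v,it)\|_{\op}\,\d t$ is bounded logarithmically. In the formulation the paper uses (that of \cite[Definition 5.9]{finis2015limit}, implied by \cite{finis2019analytic-local}), (BD) at a finite place says that certain matrix-valued functions $A_v(z)$ on the unit circle have bounded degree; one must then convert this to the unit-interval integral, which costs exactly a factor $\log q_v$ (via $\int_{t_0}^{t_0+1}\|A_v'(q_v^{-it})\|\,\d t \ll \log q_v \int_{S^1}\|A_v'(z)\|\,\d z$). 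At the archimedean place the paper carries out an explicit computation: the logarithmic derivative on the $\tau$-isotypic piece is built from elementary factors $\phi_w$, and one proves directly that $\int_{t_0}^{t_0+1}|\phi_w'(it)|\,\d t \ll \min(1,1/|u|)$, then sums over $O(1)$ values $u_j$ and over $m\ll 1+\nu(\tau)^2$ shifts to get $\ll \log m$. None of this is a citation; it is the content of the proof.

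Second, your invocation of \cref{bound-of-level} and \cref{bound-of-infinitesimal-character} is misplaced. Those lemmata bound the invariants $\level_M(\pi_f,\hat M_\alpha^+)$ and $\Lambda_M(\pi_\infty;\hat M_\alpha)$, which govern the \emph{normalizing factor} $n_\alpha$ and are used in the proof of \cref{twn+}. They play no role here: for the \emph{normalized} operator $R_{Q|P}$ the relevant local parameters are directly $\level(K_v)$ and $\nu(\tau)$, with no passage through $\pi$. This is exactly why $\nu(\pi_\infty)$ does not appear in the conclusion — not because of a cancellation you have to argue for, but because it never enters. Your attempt to route the archimedean bound through $\Lambda_M(\pi_\infty;\hat M_\alpha)$ and then argue it away is the wrong path; the paper's direct computation with $\phi_w$ bypasses $\pi_\infty$ from the start.
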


To prove \cref{bd}, we use a different formulation of property (BD), namely, \cite[Definition 5.9]{finis2015limit}. This formulation is implied by the formulation in \cite[Definition 1-2]{finis2019analytic-local}; see \cite[Remark 3]{finis2019analytic-local}.

\begin{proof}
We closely follow the proof of \cite[Proposition 5.16]{finis2015limit}. As $R_{Q\mid P}(\pi,s)=\prod_v R_{Q\mid P}(\pi_v,s)$ and as $R_{Q\mid P}(\pi_v,s)\vert_{\left(\ind_{P(F_v)}^{G(F_v)}\pi_v\right)^{K_v}}$ is independent of $s$ for almost all $v$ (see \cite[Remark 5.13]{finis2015limit}), it is enough to show that
\[\intop_{t_0}^{t_0+1} \left\|R_{Q|P}(\pi_\infty,it)^{-1} R_{Q|P}^\prime(\pi_\infty,it)\vert_{\left(\ind_{P(F_\infty)}^{G(F_\infty)}\pi_\infty\right)^{\tau}}\right\|_{\op} \d t \ll \log \left(1+\nu(\tau)\right),\]
and for each non-archimedean $v$
\[\intop_{t_0}^{t_0+1} \left\|R_{Q|P}(\pi_v,it)^{-1} R_{Q|P}^\prime(\pi_v,it)\vert_{\left(\ind_{P(F_v)}^{G(F_v)}\pi_v\right)^{K_v}}\right\|_{\op} \d t \ll \log \left(1+\level(K_v)\right)\]
where $K=\prod_{v<\infty}K_v$ and $\level(K_v)$ is the $v$-adic factor of $\level(K)$ and the implied constant is uniform in $v$ (\emph{i.e.} uniform in the order $q_v$ of the corresponding residue fields).

To prove the non-archimedean part we only modify \cite[eq.(14)]{finis2015limit}. Keeping the same notations (in particular, denoting $R_{Q\mid P}(\pi_v,s)\vert_{\left(\ind_{P(F_v)}^{G(F_v)}\pi_v\right)^{K_v}} = A_v(q_v^{-s})$) and assumptions as in \cite{finis2015limit} we bound
\begin{align*}
    \intop_{t_0}^{t_0+1} 
    &\left\|R_{Q|P}(\pi_v,it)^{-1} R_{Q|P}^\prime(\pi_v,it)\vert_{\left(\ind_{P(F_v)}^{G(F_v)}\pi_v\right)^{K_v}}\right\|_{\op} \d t\\
    &=\log q_v\intop_{t_0}^{t_0+1} \|A'_v(q_v^{-it})\|_{\op} \d t \ll \log q_v\intop_{S^1}\|A'_v(z)\|_{\op}|\d z|,
\end{align*}
where the last estimate follows by changing variable $q_v^{-it}\mapsto z$. In the first equality above, the factor $\log q_v$ comes from
$$R'_{Q\mid P}(\pi_v,s) = -\log q_v A'_v(q_v^{-s}).$$
In the second estimate above, we have changed variable $$q_v^{-it}\mapsto z,\quad \frac{\d z}{z} = -i \log q_v\d t.$$
On the other hand, as $t$ moves along $[t_0,t_0+1]\subset \R$, the variable $z$ winds $S^1$ asymptotically $\log q_v$ times, uniformly in $t_0$.
From here we follow the discussion after \cite[eq.(14)]{finis2015limit} and conclude.

\vspace{5mm}

To prove the archimedean part we first modify \cite[Lemma 5.19]{finis2015limit} and its proof. Keeping the same notations and assumptions as in \cite{finis2015limit}, we claim that
\[\intop_{t_0}^{t_0+1}\|A'(it)\|_{\op}\d t \ll \sum_{1\le j\le m:\,u_j\neq 0} \min \left(1,\frac{1}{|u_j|}\right).\]
Here $A(s)=A_\infty(s)=R_{Q\mid P}(\pi_\infty,s)\vert_{\left(\ind_{P(F_\infty)}^{G(F_\infty)}\pi_\infty\right)^{\tau}}$.
Following the proof of \cite[Lemma 5.19]{finis2015limit}, we see that the above claim follows from the following claim
\[
\intop_{t_0}^{t_0+1} |\phi_w'(it)| \d t \ll \min \left( 1, 1/|u|\right),
\]
where $w=u+iv$ with $u,v\in \R$,
\[\phi_w(z):=\frac{z+\overline{w}}{z-w},\quad |\phi_w'(it)| = \frac{2|u|}{u^2+(t-v)^2}.\]
If $u\neq 0$ we clearly have
\[\intop_{t_0}^{t_0+1}|\phi'_w(it)|\d t \ll \intop_{t_0}^{t_0+1}\frac{1}{|u|}\d t \ll \frac{1}{|u|}.\]
We also have
\[\intop_{t_0}^{t_0+1}|\phi'_w(it)|\d t \le \intop_{\R}\frac{2|u|}{u^2+(t-v)^2}\d t =\left.2\arctan\left(\frac{t-v}{|u|}\right)\right\vert^{\infty}_{-\infty} = 2\pi.\]
Thus the claim follows.

Now it follows from the discussion and the proof in \cite[pp.620-621]{finis2015limit} that 
\[\intop_{t_0}^{t_0+1} \left\|R_{Q|P}(\pi_\infty,it)^{-1} R_{Q|P}^\prime(\pi_\infty,it)\vert_{\left(\ind_{P(F_\infty)}^{G(F_\infty)}\pi_\infty\right)^{\tau}}\right\|_{\op} \d t \ll \sum_{j=1}^r\sum_{k=1}^m\min\left(1,\frac{1}{|u_j-ck|}\right),\]
where $|u_j|,r\ll_G 1$,
\[m\ll 1+\|\tau\|\, \asymp 1+\nu(\tau)^2,\]
and $c>0$ depends only on $M$. We conclude the proof in the archimedean case by noting that the double sum above is bounded by $\ll_{G,M}\log m$.

\end{proof}

Finally, we keep a record that the property (BD) is not vacuous.

\begin{prop}\label{gln-bd}\cite[Corollary 1]{finis2019analytic-local}.
Property (BD) holds at every place if $G$ is $\GL_n$ or its inner form, or a split group of rank $2$. It also holds at every non $2$-adic place if $G$ is special orthogonal, or symplectic.
\end{prop}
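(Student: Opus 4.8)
The plan is to deduce property (BD) for the groups in the statement from the local theory of \cite{finis2019analytic-local}, whose core is a reduction to rank-one situations followed by an explicit analysis of local intertwining operators there. Concretely: by the multiplicativity of the normalized intertwining operators $R_{Q|P}(\pi_v,s)$ along chains of adjacent parabolic subgroups (the Finis--Lapid--M\"uller cocycle relations used elsewhere in this excerpt), it suffices to establish the required bound on the logarithmic derivative when $P$ and $Q$ are adjacent, i.e.\ when the relevant relative root system has rank one. In that case $R_{Q|P}(\pi_v,s)$ depends on a single complex variable, and property (BD) amounts to a polynomial bound, in $\level(\pi_v)$ at finite $v$ (resp.\ in $\nu(\tau)$ at $v\mid\infty$), for the ``degree'' of this operator on the $K_v$-fixed, resp.\ $\tau$-isotypic, subspace --- essentially the number of its poles and zeros in a fixed region.

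For a non-archimedean place, the rank-one normalized operator is, on each irreducible constituent of the induced representation, a rational function of $z=q_v^{-s}$, and one needs its degree to be bounded polynomially in the conductors of the local components of $\pi_v$. For $G=\GL(n)$ and its inner forms this follows from the Jacquet--Piatetski-Shapiro--Shalika theory of Rankin--Selberg integrals, which expresses the normalized operator through the Rankin--Selberg $\gamma$-factor and yields the needed conductor bounds. For the quasi-split classical groups in the statement one instead invokes the Langlands--Shahidi local coefficients together with Arthur's endoscopic classification, transferring the normalizing data to $\GL$-type $L$- and $\epsilon$-factors and thereby reducing to the $\GL$ case; this passage through the local correspondence and the stability of $\gamma$-factors is exactly what forces the restriction to non-$2$-adic places. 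For split groups of rank $2$ (types $A_1\times A_1$, $A_2$, $B_2=C_2$, $G_2$) the rank-one data that can occur form a short explicit list, and the degree bound is checked case by case.

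At the archimedean place, $R_{Q|P}(\pi_\infty,s)$ is meromorphic in $s$, and one must bound the number of its poles in a fixed-width vertical strip on each $\tau$-isotypic component polynomially in $\nu(\tau)$. This is carried out via the explicit Harish-Chandra / Gindikin--Karpelevich description of the intertwining operator restricted to $K_\infty$-types, together with quotients of Gamma factors; for the classical groups one again routes through the transfer to $\GL$ and the corresponding archimedean Rankin--Selberg factors. Assembling the rank-one bounds over all places via the multiplicativity reduction and the known finiteness of the set of ramified places then gives the global statement.

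The main obstacle, I expect, is \emph{uniformity}: obtaining bounds on the degree (number of poles/zeros) that are genuinely \emph{polynomial} in the level and in the $K_\infty$-type, rather than merely finite for each fixed $\pi_v$, requires the full precision of the Rankin--Selberg / Langlands--Shahidi theory; and for the symplectic and orthogonal cases it rests on the endoscopic classification and on delicate properties of local $\gamma$-factors, which is precisely why those cases are presently only available away from residue characteristic $2$.
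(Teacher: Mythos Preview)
The paper does not prove this proposition at all: it is stated with a citation to \cite[Corollary~1]{finis2019analytic-local} and no proof is given. So there is nothing to compare your argument to in the paper itself; the authors simply invoke the result as a black box.

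Your sketch is a plausible high-level summary of what goes into \cite{finis2019analytic-local}, but it is not what this paper does, and in the context of this excerpt a ``proof'' is neither expected nor appropriate. If you were asked to supply a proof here, the correct response is a one-line reference to \cite[Corollary~1]{finis2019analytic-local}. I would also note that your outline conflates a few things: property (BD) as used in this paper (via \cite[Definition~5.9]{finis2015limit}) is a purely local statement about the degree of the matrix coefficients of $R_{Q|P}(\pi_v,s)$ on fixed $K_v$-types, so there is no ``global statement'' to assemble and no finiteness of ramified places to invoke; and for classical groups the argument in \cite{finis2019analytic-local} does not pass through Arthur's endoscopic classification in the way you describe.
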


\vspace{0.5cm}

Fix $T\in\a_{M_0}^G$. Let $M\in \calL$ and fix $P\in \calP(M)$. For $Q\in \calP(M)$ and
$\lambda,\Lambda \in i(\a_M^G)^*$, 
we define an operator $\calM_Q^T(\lambda,\Lambda,P)$ acting on $\calA^2(P)$ by
\begin{equation}\label{def-calM}
    \calM_Q^T(\lambda,\Lambda,P) := e^{\langle \Lambda,Y_Q(T) \rangle }M_{Q|P}(\lambda)^{-1}M_{Q|P}(\lambda+\Lambda).
\end{equation}
Similarly, for $L\in \calL(M)$, $Q\in \calP(L)$ and $\lambda,\Lambda \in i(\a_L^G)^* \subset i(\a_M^G)^*$ we define 
\[\calM_Q^T(\lambda,\Lambda,P) := \calM_{Q_1}^T(\lambda,\Lambda,P),\]
for arbitrary $Q_1\subset Q$, $Q_1 \in \calP(M)$. The choice of $Q_1$ does not matter here. Here in the left hand side $\lambda,\Lambda$ are realized as restrictions on $\a_L$; see \cite[pp.133]{arthur2005intro}.

Following definitions in \cite[\S21]{arthur2005intro}, the set $\{ \calM_Q^T(\lambda,\Lambda,P) \}_{Q\in \calP(L)}$ is an example of a ``$(G,L)$-family"; see \cite[\S17]{arthur2005intro} for a general discussion on $(G,M)$-family.
For $L\in \calL(M)$ and $\lambda \in i(\a_L^G)^*$,
we define the operator $\calM^T_L(\lambda,P)$ as the value at $\Lambda=0$ of
\[
\sum_{Q \in \calP(L)}\frac{\calM_Q^T(\lambda,\Lambda,P)}{\theta_Q(\Lambda)}.
\]
Below we prove the main estimate on the operator norm of $\calM^T_L(\lambda,P)$.

\begin{prop}\label{prop:calM bound}
Assume that $G$ satisfies properties (TWN+) and (BD). Fix $M\in\calL$, $P\in\calP(M)$, and $L\in\calL(M)$. Let $\tau\in\widehat{K_\infty}$, $K\subset K_f$ be an open-compact, and $\pi\in\Pi_2(M)$. Then for every $\lambda_0 \in i(\a_L^G)^*$,
\[
    \intop_{\substack{\lambda\in i(\a_L^G)^* \\ \|\lambda-\lambda_0\|\le 1}}\left\|\calM^T_L(\lambda,P)|_{\calA^2_\pi(P)^{\tau,K}} \right\|_{\op} \d \lambda \ll \left(\|T\|\,\log \left(1+\nu(\tau)+\nu(\pi_\infty)+\|\lambda_0\|+\level(K)\right)\right)^{\dim \a_L^G},
\]
where $T\in\a_{M_0}^G$ with sufficiently large $d(T)$.
\end{prop}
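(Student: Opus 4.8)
The plan is to estimate $\calM^T_L(\lambda,P)$ by exploiting the $(G,L)$-family structure of $\{\calM_Q^T(\lambda,\Lambda,P)\}_{Q\in\calP(L)}$ and the Finis--Lapid--M\"uller combinatorial formula, which expresses the value at $\Lambda=0$ of $\sum_Q \calM_Q^T(\lambda,\Lambda,P)/\theta_Q(\Lambda)$ as a sum over ways of choosing $d:=\dim\a_L^G$ linearly independent roots $\beta_1,\dots,\beta_d$, of terms each of which is a product of (up to) $d$ first derivatives of normalized rank-one intertwining operators together with a polynomial in $T$ of degree $\le d$ (the $Y_Q(T)$-contributions), and global normalizing factors and their logarithmic derivatives. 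Concretely, each summand looks schematically like a constant (bounded by $\ll (1+\|T\|)^d$) times a product $\prod_{i=1}^d \left( \tfrac{n'_{\beta_i}}{n_{\beta_i}}(\pi,\lambda) \text{ or } R_{\beta_i}^{-1}R'_{\beta_i}(\pi,\lambda) \right)$ evaluated at $\lambda$ restricted along the relevant root direction, with the remaining intertwining operators being unitary for $\lambda\in i(\a_L^G)^*$. So the first step is to cite \cref{prop:FLM formula} (the FLM formula, referenced in the excerpt) and reduce $\|\calM^T_L(\lambda,P)|_{\calA^2_\pi(P)^{\tau,K}}\|_{\op}$ to a bounded sum, indexed by tuples of roots, of $(1+\|T\|)^d$ times products of operator norms of these first-derivative factors.

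Next I would integrate over the ball $\|\lambda-\lambda_0\|\le 1$ in $i(\a_L^G)^*$. The key point is that the $d$ chosen roots $\beta_1,\dots,\beta_d$ span $(\a_L^G)^*$, so one can change variables so that $t_i:=\langle\lambda,\beta_i^\vee\rangle$ (up to the normalization $\varpi$) become independent coordinates; the ball $\|\lambda-\lambda_0\|\le1$ is contained in a product of unit-length intervals $t_i\in[t_i^0,t_i^0+1]$ (after enlarging by a bounded factor depending only on $G$). Since the $i$-th factor depends only on $t_i$, the integral of the product factors as a product of one-dimensional integrals $\int_{t_i^0}^{t_i^0+1}|\,\cdot\,|\,\mathrm{d}t_i$. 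Each such one-dimensional integral is exactly what \cref{twn+} (for the normalizing-factor logarithmic derivatives) and \cref{bd} (for the normalized intertwining operators) bound, giving $\ll \log(1+|t_i^0|+\nu(\pi_\infty)+\nu(\tau)+\level(K))$; here I use \cref{bound-of-level} and \cref{bound-of-infinitesimal-character} to translate the abstract complexity parameters $\level_M(\pi_f,\hat M_\alpha^+)$ and $\Lambda_M(\pi_\infty;\hat M_\alpha)$ appearing in those hypotheses into $\level(K)$, $\nu(\pi_\infty)$, $\nu(\tau)$, noting that $\calA^2_\pi(P)^{\tau,K}\ne\{0\}$ is exactly the hypothesis needed. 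Finally $|t_i^0|\le \|\lambda_0\|+O(1)$, so each one-dimensional integral is $\ll\log(1+\nu(\tau)+\nu(\pi_\infty)+\|\lambda_0\|+\level(K))$, and multiplying the $d$ of them together with the $(1+\|T\|)^d$ from the polynomial in $T$, and summing over the bounded number of root-tuples, yields the claimed bound $\ll\big(\|T\|\,\log(1+\nu(\tau)+\nu(\pi_\infty)+\|\lambda_0\|+\level(K))\big)^{\dim\a_L^G}$.

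The main obstacle — and the step requiring the most care — is the bookkeeping in the FLM formula: verifying that the derivatives genuinely occur \emph{only} to first order and \emph{only} along linearly independent directions (so that the change of variables into independent coordinates $t_i$ is legitimate and the integrand truly factors), and controlling the polynomial-in-$T$ coefficients uniformly. One has to be careful that the "remaining" intertwining operators in each summand are unitary on $i(\a_L^G)^*$ (hence contribute $1$ to the operator norm) and that the scalar normalizing factors $n_\alpha(\pi,s)$ have absolute value $1$ on the imaginary axis (so only their logarithmic derivatives, not the factors themselves, matter) — both facts are recorded in the excerpt. A secondary subtlety is that property (BD) as used in \cref{bd} is stated for \emph{adjacent} $P,Q$, i.e. rank-one situations; one must ensure the rank-one operators appearing in the FLM expansion are of precisely this adjacent type, which is guaranteed by the $(G,L)$-family formalism since each $\beta_i$-derivative is taken between two parabolics in $\calP(L)$ adjacent along $\beta_i$, pulled back to $\calP(M)$ as in the definition preceding the proposition. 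Once this structural reduction is in place, the analytic input is entirely supplied by \cref{twn+} and \cref{bd} and the rest is the elementary product-of-intervals estimate sketched above.
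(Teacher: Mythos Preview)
Your proposal is correct and follows essentially the same route as the paper: reduce via \cref{prop:FLM formula} to the operators $\Delta^T_{\xi(\underline{\beta})}(P,\lambda)$, use unitarity of the interleaving $M_{Q|P}(\lambda)$ to bound the operator norm by a product of factors $\|\delta_{P_i|P_i'}(\lambda)\|_{\op}+O(\|T\|)$, pass to the coordinates $t_i=\langle\lambda,\beta_i^\vee\rangle$ (enlarging the ball to a box so the integral factors), and bound each one-dimensional integral via properties (TWN+) and (BD). The only cosmetic difference is that the paper packages your direct appeal to \cref{twn+} and \cref{bd} into an intermediate lemma (\cref{lem:delta bound}) bounding $\int_{t_0}^{t_0+1}\|\delta_{Q|P}(i\varpi t)\|_{\op}\,\mathrm{d}t$, but the content is identical.
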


Before proving \cref{prop:calM bound}, we give a proof of \cref{thm:unramified}.

\begin{proof}[Proof of \cref{thm:unramified}]
Using \cref{lem:Maass-Selberg unram} and the fact that $M_{Q\mid P}(\lambda)$ unitary for $\lambda\in i(\a_M^G)^*$, and recalling \cref{def-calM}, 
we can write
\[
\|\Lambda^T \Eis(\varphi,\lambda)\|_2^2 = \langle \calM^T_M(\lambda,P) \varphi, \varphi \rangle_{\calA^2(P)}.
\]
We conclude using \cref{prop:calM bound} and that $\varphi$ is a unit.
\end{proof}

The rest of this section is devoted to the proof of \cref{prop:calM bound}.

Let $P|^\alpha Q$ for some $\alpha\in\Sigma_M$. Let $\varpi \in \a_M^*$ be such that $\langle \varpi,\alpha^\vee\rangle=1$. We define 
\begin{equation}\label{defn-delta}
    \delta_{Q|P}(\lambda) := M_{Q|P}^{-1}(\lambda)D_{\varpi}M_{Q|P}(\lambda)
\end{equation}
as in \cite[\S 4.3]{finis2015limit} as an operator from $\calA^2(P)$ to $\calA^2(P)$. Here $D_{\varpi}M_{Q|P}(\lambda)$ is the derivative of $M_{Q|P}(\lambda)$ in the direction $\varpi$. The definition is independent of the choice of $\varpi$; see \cite[pp.608]{finis2015limit}.

\begin{lemma}\label{lem:delta bound}
Keep the notations as above and assumptions as in \cref{prop:calM bound}.
We have
\[
\intop_{t_0}^{t_0+1} \left\| \delta_{Q|P}(it\varpi)|_{\calA^2_{\pi}(P)^{\tau,K}} \right\|_{\op} \d t \ll
    \log\left(1+\nu(\tau)+\nu(\pi_\infty)+|t_0|+\level(K)\right),
\]
for any $t_0\in\R$.
\end{lemma}

\begin{proof}
We take inverse on the both sides of \cref{factorization-intertwiner} and compose it with the derivative of \cref{factorization-intertwiner}, and obtain
\[
j_P^{-1}\circ \delta_{Q|P}(s\varpi) \circ j_P = \frac{n'_\alpha(\pi,s)}{n_\alpha(\pi,s)}\mathrm{Id}+R_{Q|P}^{-1}(\pi,s)R_{Q|P}^\prime(\pi,s).
\]
Recall that $j_P$ can be chosen to be isometric, i.e., $\|j_P\|=1$. We conclude the proof by applying \cref{twn+} and \cref{bd}.
\end{proof}

Given $L \in \calL(M)$, we let $\frakB_{P,L}$ be the set of tuples $\underline{\beta} = (\beta_1^\vee,...,\beta_m^\vee)$ of $\Sigma_P^\vee$ with $m:=\dim\a^G_L$ such that their projection to $\a_L^G$ form a basis of $\a_L^G$. We denote $\vol(\underline{\beta})$ to be the co-volume of the lattice generated by the projection of $\beta_1^\vee,...,\beta_m^\vee$ in $\a_L^G$. 
Let 
\[
\Xi_L(\underline{\beta}):=\{(P_1,...,P_m,P_1^\prime,...,P_m^\prime) \in \calP(M)^{2m} \mid P_i,P_i^\prime \in \calP(M), P_i|^{\beta_i}P_i^\prime\}.
\]
In \cite[pp.179-180]{finis2011spectral-abs}, (also see \cite[pp.608-609]{finis2015limit}) the authors define a map\footnote{The definition of $\xi$ also depends on $L$ and on a certain $\mu\in\a_M^*$, but we do not need the precise definition of the map.} \[\frakB_{P,L}\ni\underline{\beta}\mapsto \xi(\underline{\beta})\in\Xi_L(\underline{\beta}).\]
For $P|^\beta P'$ we find $\alpha_{P,P^\prime}\in \a_M^*$ such that
\[
Y_{P}(T)-Y_{P'}(T) = \langle \alpha_{P,P^\prime},T-T_0 \rangle \beta^\vee.
\]
Such an $\alpha_{P,P'}$ exists; see \cite[pp.186]{finis2011spectral-abs}.
Finally, for $\xi\in \Xi_L(\underline{\beta})$ we define the operator 
\begin{multline}\label{eq:Delta}
    \Delta_{\xi}^T(P,\lambda) := \frac{\vol(\underline{\beta})}{m!}M_{P\mid P_1^'}(\lambda) \left(\delta_{P_1|P_1^\prime}(\lambda)+\langle\alpha_{P_1,P_1^\prime},T-T_0\rangle\mathrm{Id}\right) M_{P_1^\prime|P_2^\prime}(\lambda)\\
    \dots \left(\delta_{P_m|P_m^\prime}(\lambda)+\langle\alpha_{P_m,P_m^\prime},T-T_0\rangle\mathrm{Id}\right)M_{P_m^\prime|P}(\lambda)
\end{multline}
Note that the definition of $\Delta_\xi^T(P,\lambda)$ here looks a little different from \cite[pp.186]{finis2011spectral-abs}, although they are the same. This is because the definition of $\delta_{P_i\mid P_i'}$ in \cite[pp.179]{finis2011spectral-abs} is different than ours (also compare with the definition of $\Delta^T_\xi(P,\lambda)$ in \cite[pp.609]{finis2015limit}).

Following \cref{prop:FLM formula} expresses the operator $\calM^T_L$ in terms of $\Delta^T_\xi$. A proof of this is essentially contained in \cite[Theorem 4]{finis2011spectral-abs}.

\begin{prop}\label{prop:FLM formula}
Keep the same notations as in \cref{prop:calM bound}. We have
\[
\calM_L^T(P,\lambda) = \sum_{\underline{\beta}\in \mathfrak{B}_{P,L}} \Delta_{\xi(\underline{\beta})}^T(P,\lambda)
\]
as operators on $\calA^2_{\pi}(P)^{\tau,K}$.
\end{prop}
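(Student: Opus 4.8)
The plan is to recognise $\{\calM_Q^T(\lambda,\Lambda,P)\}_{Q\in\calP(L)}$ as a $(G,L)$-family of operators and then to invoke the explicit evaluation of such families due to Finis--Lapid--M\"uller \cite[Theorem~4]{finis2011spectral-abs}, the substantive part being a translation between their conventions and the ones used here. First I would check that, as a function of $\Lambda\in i(\a_L^G)^*$, the collection $\{\calM_Q^T(\lambda,\Lambda,P)\}_{Q\in\calP(L)}$ is a $(G,L)$-family of operators on $\calA^2(P)$ in the sense of \cite[\S17]{arthur2005intro}. Writing $\calM_Q^T(\lambda,\Lambda,P)=e^{\langle\Lambda,Y_Q(T)\rangle}\,M_{Q|P}(\lambda)^{-1}M_{Q|P}(\lambda+\Lambda)$, the scalar factor $e^{\langle\Lambda,Y_Q(T)\rangle}$ satisfies the compatibility because for adjacent $Q,Q'\in\calP(L)$ the difference $Y_Q(T)-Y_{Q'}(T)$ lies along the corresponding coroot direction (the analogue for $\calP(L)$ of the relation defining $\alpha_{P,P'}$), and the operator factor satisfies it by the rank-one dependence of the $M_{Q|P}(\lambda)$ between adjacent parabolics together with the cocycle relations $M_{Q''|Q'}(\lambda)M_{Q'|Q}(\lambda)=M_{Q''|Q}(\lambda)$ and $M_{P|Q}(\lambda)M_{Q|P}(\lambda)=\mathrm{Id}$. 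Since the product of two $(G,L)$-families is again one, $\calM_L^T(\lambda,P)=\lim_{\Lambda\to0}\sum_{Q\in\calP(L)}\theta_Q(\Lambda)^{-1}\calM_Q^T(\lambda,\Lambda,P)$ exists; and because the intertwining operators preserve the $\pi$-isotypic part and, as the action of compact subgroups on $\calA^2(P)$ is independent of the spectral parameter, also the right $K_\infty$-type $\tau$ and right $K$-invariance conditions, the whole computation takes place on the finite-dimensional space $\calA^2_\pi(P)^{\tau,K}$, so there are no analytic subtleties.

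Next I would apply \cite[Theorem~4]{finis2011spectral-abs}, which evaluates precisely this type of $(G,L)$-family --- a product of an exponential scalar factor and a normalised ratio $M_{Q|P}(\lambda)^{-1}M_{Q|P}(\lambda+\Lambda)$ of intertwining operators --- at $\Lambda=0$, giving a sum over tuples $\underline{\beta}\in\frakB_{P,L}$ of a rank-one contribution attached to the chain $\xi(\underline{\beta})=(P_1,\dots,P_m,P_1',\dots,P_m')$ with $m=\dim\a_L^G$. In that contribution each of the $m$ differentiations, taken along the independent directions $\beta_i^\vee$, either hits the exponential and produces the scalar $\langle\alpha_{P_i,P_i'},T-T_0\rangle$, or hits the intertwining ratio and produces the rank-one logarithmic derivative, which under our normalisation is exactly $\delta_{P_i|P_i'}(\lambda)$ as in \eqref{defn-delta}; the surviving intertwining operators telescope into the connecting maps $M_{P|P_1'}(\lambda),M_{P_1'|P_2'}(\lambda),\dots,M_{P_m'|P}(\lambda)$, and the $(G,L)$-family limit contributes exactly the combinatorial weight $\vol(\underline{\beta})/m!$. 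Comparing with \eqref{eq:Delta}, this is the asserted identity.

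The main obstacle, and essentially the only place where genuine work is needed, is reconciling normalisations: as noted after \eqref{eq:Delta}, our $\delta_{Q|P}(\lambda)=M_{Q|P}(\lambda)^{-1}D_{\varpi}M_{Q|P}(\lambda)$ differs from the quantity at \cite[p.\,179]{finis2011spectral-abs}, our $\Delta_\xi^T(P,\lambda)$ groups the connecting intertwining operators differently, and one must check that the auxiliary element $\mu\in\a_M^*$ entering the definition of $\xi(\underline{\beta})$ drops out of the final answer and that the volume/factorial factor matches. I would handle this by an inductive comparison of the two telescoping products, repeatedly using $M_{P|Q}(\lambda)M_{Q|P}(\lambda)=\mathrm{Id}$ and the Leibniz rule to move single derivatives past intertwining operators, verifying that the two groupings of the $M$'s and the derivatives $D_{\varpi}M_{Q|P}(\lambda)$ agree term by term. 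Once this bookkeeping is in place, the statement is \cite[Theorem~4]{finis2011spectral-abs} transcribed into the present notation.
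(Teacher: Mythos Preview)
Your proposal is correct and follows essentially the same route as the paper: restrict to the finite-dimensional space $\calA^2_\pi(P)^{\tau,K}$ to sidestep analytic issues, verify the $(G,L)$-family/compatibility property for the product of the exponential factor and the intertwiner ratio, and then invoke the Finis--Lapid--M\"uller evaluation. The only cosmetic difference is that the paper phrases the argument via \emph{compatible families} of formal Taylor expansions at $\Lambda=0$ and cites \cite[Theorem~8.1]{finis2011spectral-combinatorial} directly (thereby re-running the proof of \cite[Theorem~4]{finis2011spectral-abs} in the present setting), whereas you invoke the packaged \cite[Theorem~4]{finis2011spectral-abs} and then do the normalisation bookkeeping afterwards; the content is the same.
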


\begin{proof}
We closely follow the proof of \cite[Theorem 4]{finis2011spectral-abs}. The Taylor expansion $\calA_Q$ of 
\[M_{Q\mid P}(\lambda)^{-1}M_{Q\mid P}(\lambda+\Lambda),\quad \text{at }\Lambda=0\]
form a compatible family (in the sense of \cite{finis2011spectral-abs}), when restricted to the finite-dimensional subspace $\calA_\pi^2(P)^{\tau,K}$. This follows from the argument in \cite[pp.185]{finis2011spectral-abs} for $s=1$. On the other hand, the Taylor series $c_Q$ of $e^{\langle\Lambda,Y_Q(T)\rangle}$ at $\Lambda=0$ form a scalar-valued compatible family. Thus following the argument in \cite[pp.186]{finis2011spectral-abs} we conclude that $c_Q\calA_Q$ form a compatible family when restricted to the finite-dimensional subspace $\calA_\pi^2(P)^{\tau,K}$. Hence, as in \cite{finis2011spectral-abs}, we compute the limit of \cref{def-calM} at $\Lambda=0$ using \cite[Theorem 8.1]{finis2011spectral-combinatorial}, which concludes the proof.
\end{proof}

Finally, we can prove \cref{prop:calM bound}.

\begin{proof}[Proof of \cref{prop:calM bound}]
Using \cref{prop:FLM formula} we see that it is enough to bound for each $\beta \in \frakB_{P,L}$,
\[
\intop_{\substack{{\lambda\in i(\a_L^G)^*} \\ \|\lambda-\lambda_0\|\le 1}}\left\| \Delta_{\xi(\underline{\beta})}^T(P,\lambda)|_{\calA^2_\pi(P)^{\tau,K}} \right\|_{\op} \d \lambda \ll \left(\|T\|\,\log \left(1+\nu(\tau)+\nu(\pi_\infty)+\|\lambda_0\|+\level(K)\right)\right)^{m}. 
\]
Let $\vartheta_i\in(\a_L^G)^*$ be the dual basis to $\beta_j^\vee$.
Using this basis the above integral can be majorized by 
\[\ll \intop_{\substack{{\lambda=i\sum_{j=1}^mt_j\vartheta_j}\\{t_j\in\R,\,|t_j-t_{0,j}| \le  C}}}\left\|\Delta_{\xi(\underline{\beta})}^T(P,\lambda)|_{\calA^2_\pi(P)^{\tau,K}} \right\|_{\op} \d \lambda,\]
where $it_{0,j}:=\langle \lambda_0,\beta_j^\vee\rangle$ and $C$ is a constant depending only on the root system. Now we recall \cref{eq:Delta} and use the fact that $M_{Q|P}(\lambda)$ is unitary for purely imaginary $\lambda$ to see that the above integral can be bounded by
\[
\prod_{j=1}^m\intop_{|t_j - t_{0,j}| \le C} \left(\left\|\delta_{P_j|P_j^\prime}(i\vartheta_jt_j)|_{\calA^2_\pi(P'_j)^{\tau,K}}\right\|_{\op}+\|T\|\right) \d t_j.
\]
We conclude the proof by dividing the above interval in the $j$'th integral in $O(1)$ number of length one sub-interval centering the points of size $O(t_{0,j})$ and applying \cref{lem:delta bound} for the integral over each sub-interval.
\end{proof}

\section{The Spectral Side of the Trace Formula}

For any $M\in\calL$ and $\pi\in\Pi_2(M)$ we denote $\calB_{\chi,\pi}(P)$ to be an orthonormal basis of $\calA^2_{\chi,\pi}(P)$. Let $h\in C_c^\infty(G(\bbA)^1)$ be a $K_\infty K_f$-finite function and $T\in\a_{M_0}^G$ with sufficiently large $d(T)$.

Let $\chi$ be a cuspidal data. In \cite[\S3]{arthur1980trace-II}, Arthur defined a distribution $J_\chi^T(h)$ via a \emph{geometric} expansion (for $d(T)$ sufficiently large) and showed that $J_\chi^T(h)$ is a polynomial in $T$. We do not need to know the exact definition of the distribution $J_\chi^T(h)$ but will record \emph{spectral} formul{\ae} for it that will be useful for us.

We recall certain norm $\|g\|$ on $G(\bbA)\ni g$ as defined in \cite[pp.1252-1253]{arthur1982family-I}. For $P \supset P_0$ we let
\[n_P:=\sum_{P_0\subset P'\sim P}|W(\a_P,\a_{P'})|,\]
where $\sim$ denotes association.

\begin{prop}\label{prop:pre-trace}\cite[Proposition 2.2]{arthur1982family-I}
There is a constant $C_0>0$ such that if 
\[
d(T) \ge C_0 \left(1+\sup\{\log\|g\| \mid g\in \supp(h)\}\right),
\]
then
\[
J_\chi^T(h)= \sum_{P\supset P_0} \frac{1}{n_P}\sum_{\pi \in \Pi_2(M_P)} \sum_{\varphi \in \calB_{\chi,\pi}(P)}\intop_{i (\a_P^G)^*} \langle \Lambda^T \Eis(\rho(P,\lambda,h)\varphi,\lambda),\Lambda^T \Eis(\varphi,\lambda) \rangle_{\calA^2(G)} \d \lambda,
\]
and moreover, is a polynomial in $T$.
\end{prop}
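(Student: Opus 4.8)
The plan is to derive Proposition 4.10 (the pre-trace formula for $J_\chi^T(h)$) directly as a specialization of Arthur's fine spectral expansion, which is precisely the content of \cite[Proposition 2.2]{arthur1982family-I}. Since the statement is attributed to Arthur, the ``proof'' here is really a matter of identifying the right reference, normalizing conventions to match ours, and sketching why the stated inequality on $d(T)$ suffices. So first I would recall Arthur's definition of $J_\chi^T(h)$ from \cite[\S3]{arthur1980trace-II}: it arises by integrating the truncated kernel $\Lambda^T k_\chi^T(x,x)$ over $G(F)\backslash G(\bbA)^1$, where $k_\chi(x,y) = \sum_{P\supset P_0} n_P^{-1}\int_{i(\a_P^G)^*}\sum_{\varphi\in\calB_{\chi}(P)} \Eis(\rho(P,\lambda,h)\varphi,\lambda)(x)\overline{\Eis(\varphi,\lambda)(y)}\,\d\lambda$ is the kernel of the operator $\rho(h)$ restricted to the $\chi$-part of the spectrum, decomposed along associate classes of parabolics.

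The key analytic input — and the substance of Arthur's theorem — is that for $d(T)$ sufficiently large relative to $\supp(h)$, one may interchange the truncation operator $\Lambda^T$ (applied in, say, the $x$-variable) with the integral over $x$, and then use the fact that $\Lambda^T$ is an idempotent that acts on the relevant kernel so that $\int \Lambda^T k_\chi(x,x)\,\d x$ collapses to the sum of inner products $\langle \Lambda^T\Eis(\rho(P,\lambda,h)\varphi,\lambda), \Lambda^T\Eis(\varphi,\lambda)\rangle$. Concretely, the steps are: (i) write $J_\chi^T(h)$ via the truncated kernel; (ii) expand the kernel spectrally over associate classes of parabolics and over the orthonormal bases $\calB_{\chi,\pi}(P)$, splitting $\calB_\chi(P) = \bigsqcup_\pi \calB_{\chi,\pi}(P)$; (iii) apply self-adjointness of $\Lambda^T$ together with $(\Lambda^T)^2=\Lambda^T$ (properties in \cref{sec:truncation}) to move one copy of $\Lambda^T$ onto each Eisenstein factor, turning the diagonal integral into the pairing; (iv) invoke Arthur's convergence and polynomiality statement, which requires exactly the bound $d(T)\ge C_0(1+\sup\{\log\|g\| : g\in\supp(h)\})$ so that the truncated Eisenstein series are rapidly decreasing and the triple sum/integral converges absolutely, and so that the resulting expression is a polynomial in $T$ (this last point uses that the Eisenstein series are regular on $i(\a_P^G)^*$ and Langlands' combinatorial lemma on $(G,M)$-families, already in the background via \cref{prop:FLM formula}).

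The main obstacle — or rather the main thing requiring care rather than a new idea — is matching measure normalizations and the precise form of the constants $n_P$ with those of \cite{arthur1982family-I}; Arthur works with $J^T = \sum_\chi J_\chi^T$ and various normalizations of Haar measures on $\a_P^G$, $i(\a_P^G)^*$, and $K_\bbA$, and one must verify that the conventions fixed in \cref{sec:automorphic-forms} (following \cite[\S7]{arthur2005intro} and \cite{finis2011spectral-abs}) reproduce the displayed formula without spurious constants. A secondary point is that Arthur's original statement is phrased for $K$-finite $h$ on $G(\bbA)^1$ and one should note that our $h$ is of this type, so no extension is needed. I expect the bulk of the write-up to be a pointer to \cite[Proposition 2.2]{arthur1982family-I} (and the underlying \cite{arthur1980trace-II,arthur1982inner}) with a remark that the spectral decomposition has been refined along $\pi\in\Pi_2(M_P)$, which is immediate since $\calA^2_{\chi}(P) = \bigoplus_\pi \calA^2_{\chi,\pi}(P)$ is an orthogonal decomposition and each piece is $\rho(P,\lambda,h)$-invariant.
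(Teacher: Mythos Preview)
Your proposal is correct and in fact goes further than the paper does: the paper gives no proof at all for this proposition, simply stating it with a citation to \cite[Proposition~2.2]{arthur1982family-I} and moving on. Your sketch of how Arthur's argument runs (truncated kernel, spectral expansion, self-adjointness and idempotence of $\Lambda^T$, the support condition on $h$ ensuring polynomiality in $T$) is accurate and would serve as a helpful gloss, but none of it appears in the paper itself.
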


The $\pi$-sum in \cref{prop:pre-trace} is actually finite
(see \cite[pp.1245]{arthur1982family-I}), although we will not use this fact in this paper.

The polynomial expression of $J_\chi^T(h)$ can be explicitly calculated. We record the expression in the following \cref{prop:spectral side}.

For $L\in\calL(M)$, let $W^L(M_P)$ be the set of $w\in W(M)$ such that
\[\{H\in\a_M \mid wH=H\} = \a_L.\]
We record the following important proposition. For each $w\in W^L(M_P)$ we consider the intertwining operator $M(P,w)$ defined in \cite[pp.1309]{arthur1982family-II}. We do not need an explicit definition (which is a bit convoluted) of this operator but only need some of its properties which is described below.

\begin{prop}\label{prop:spectral side}
The quantity $J_\chi^T(h)$ is equal to
\[\sum_{P\supset P_0} \frac{1}{n_P}\sum_{\pi \in \Pi_2(M_P)}\sum_{L\in\calL(M_P)}\sum_{w\in W^L(M_P)}\iota_w \intop_{i(\a_{L}^G)^*} \tr \left(\calM^T_{L}(P,\lambda)M(P,w)\rho(P,\lambda,h)|_{\calA^2_{\chi,\pi}(P)}\right) \d \lambda.\]
where $\iota_w:=\left|\det\left((w-1)\mid_{\a^L_M}\right)\right|$.
\end{prop}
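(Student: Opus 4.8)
The plan is to derive the formula by combining \cref{prop:pre-trace} with Arthur's explicit evaluation of the polynomial $J_\chi^T(h)$, reorganised through the theory of $(G,L)$-families. First I would start from \cref{prop:pre-trace}: whenever $d(T)$ is large relative to $\supp(h)$,
\[
J_\chi^T(h)= \sum_{P\supset P_0} \frac{1}{n_P}\sum_{\pi \in \Pi_2(M_P)} \sum_{\varphi \in \calB_{\chi,\pi}(P)}\intop_{i (\a_P^G)^*} \langle \Lambda^T \Eis(\rho(P,\lambda,h)\varphi,\lambda),\Lambda^T \Eis(\varphi,\lambda) \rangle \d \lambda.
\]
For fixed $P\in\calP(M)$, $\pi\in\Pi_2(M)$ and $\lambda$, I would invoke Arthur's inner product formula for truncated Eisenstein series attached to (not necessarily cuspidal) vectors of the discrete spectrum \cite{arthur1982inner}, in the operator-theoretic form used in \cite{finis2011spectral-abs}. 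This expresses the sesquilinear pairing $(\varphi_1,\varphi_2)\mapsto\langle \Lambda^T \Eis(\varphi_1,\lambda),\Lambda^T \Eis(\varphi_2,\lambda)\rangle$ on $\calA^2_{\chi,\pi}(P)$ as the value at $\Lambda=0$ of a $(G,L)$-family assembled from the operators $M_{Q|P}(\lambda)^{-1}M_{Q|P}(\lambda+\Lambda)$ and the scalars $e^{\langle\Lambda,Y_Q(T)\rangle}/\theta_Q(\Lambda)$, together with the Weyl intertwinings $M(P,s)$ relating $\Eis(\varphi,\lambda)$ to $\Eis(\varphi,s\lambda)$; this is precisely the family whose $(G,L)$-limit we have denoted $\calM_L^T(P,\lambda)$, cf.\ \cref{def-calM} and \cref{prop:FLM formula}.

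Next I would insert $\varphi_1=\rho(P,\lambda,h)\varphi$ and $\varphi_2=\varphi$, and replace $\sum_{\varphi\in\calB_{\chi,\pi}(P)}\langle A\varphi,\varphi\rangle$ by $\tr\!\left(A|_{\calA^2_{\chi,\pi}(P)}\right)$; this is legitimate since $\rho(P,\lambda,h)$ is of trace class on $\calA^2_{\chi,\pi}(P)$ for $h\in C_c^\infty(G(\bbA)^1)$, uniformly for $\lambda$ in the (compact) domain of integration, which also justifies interchanging the summation, the $\lambda$-integration and the limit $\Lambda\to0$. Evaluating that limit by the combinatorics of compatible $(G,L)$-families \cite[Theorem 8.1]{finis2011spectral-combinatorial}, and then performing, for each $s$, the substitution $\lambda\mapsto s\lambda$ on the relevant subspace of $i(\a_P^G)^*$ — which folds the a priori double Weyl sum into the single sum over $L\in\calL(M_P)$ and $s\in W^L(M_P)$, restricts the integral from $i(\a_P^G)^*$ to $i(\a_L^G)^*$, and contributes the Jacobian $\iota_s=|\det((s-1)|_{\a_P^L})|$ — one arrives at the asserted identity. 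Finally, since the left-hand side is a polynomial in $T$ by \cref{prop:pre-trace} and each $\calM_L^T(P,\lambda)$ is visibly polynomial in $T$, the equality of the two polynomial expressions on the range $d(T)\gg\sup_{g\in\supp h}\log\|g\|$ propagates to all $T$ with $d(T)$ sufficiently large.

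The delicate point is the middle step: matching Arthur's inner product formula and its $\Lambda\to0$ limit with the precise operators $\calM_L^T(P,\lambda)$ as normalised here, and correctly bookkeeping every combinatorial constant — the weights $n_P$, the reduction of $\sum_{P'\sim P}\sum_{s,t\in W(\a_P,\a_{P'})}$ to $\sum_{L}\sum_{s\in W^L(M_P)}$, the Jacobians $\iota_s$, and the appearance of the unitary operators $M(P,s)$. This is exactly the content of Arthur's explicit computation of the spectral side in \cite{arthur1982family-II}, in the streamlined form of Finis--Lapid--M\"uller \cite{finis2011spectral-abs}; in practice the proof therefore amounts to citing those results once one has checked — as already noted around \cref{prop:FLM formula} and \eqref{eq:Delta} — that our normalisation of $\calM_Q^T(\lambda,\Lambda,P)$ and of the associated $(G,L)$-family agrees with theirs.
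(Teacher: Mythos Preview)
Your outline is \emph{not} the route the paper takes, and it contains a genuine gap at the ``delicate point'' you yourself flag.

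You propose to start from \cref{prop:pre-trace} and then, for each $(P,\pi,\lambda)$, plug in ``Arthur's inner product formula for truncated Eisenstein series attached to (not necessarily cuspidal) vectors of the discrete spectrum \cite{arthur1982inner}'' to get an exact expression for $\langle \Lambda^T\Eis(\varphi_1,\lambda),\Lambda^T\Eis(\varphi_2,\lambda)\rangle$ as a $(G,L)$-limit involving $\calM_L^T(P,\lambda)$ and the $M(P,s)$. The problem is that no such exact formula is available: for $\pi$ that is merely discrete (not cuspidal), \cite{arthur1982inner} only gives an \emph{asymptotic} inner product formula, with an error term that the paper itself warns (see \cref{sec:difficulties}, item (3)) is not known with the precision one would need here. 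The exact Maass--Selberg identity of \cref{lem:Maass-Selberg unram} is restricted to \emph{unramified} cuspidal data; for general $\chi$ one cannot simply evaluate the pairing and then ``fold'' the Weyl sums by hand. So the middle step of your argument, as written, does not go through.

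The paper instead bypasses this entirely. It does not re-derive Arthur's formula from the pre-trace expression; it \emph{cites} the formula as already established by Arthur in \cite{arthur1982family-I,arthur1982family-II} (for $T=T_0$), in the regularised form with the cut-off $B_\pi^\epsilon(\lambda)$, and observes that the same derivation works for general $T$. The actual work in the paper's proof is then purely analytic: showing that
\[
\sum_{\pi \in \Pi_2(M_P)}\intop_{i(\a_{L}^G)^*}\left|\tr \left(\calM^T_{L}(P,\lambda)M(P,s)\rho(P,\lambda,h)|_{\calA^2_{\chi,\pi}(P)}\right)\right|\d \lambda<\infty
\]
so that one may pass the limit $\epsilon\to 0$ inside by dominated convergence. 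This absolute convergence is obtained via \cref{prop:FLM formula}, a $K_\infty$-type decomposition, and the estimates of \cite[\S5.1]{finis2011spectral-abs}. In short: you attempt to reprove Arthur's spectral expansion from the inner product side, which is obstructed by the lack of an exact Maass--Selberg formula in the residual case; the paper treats Arthur's expansion as input and only justifies the removal of the regularisation.
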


Arthur in \cite{arthur1982family-I,arthur1982family-II} proved \cref{prop:spectral side} only for $T=T_0$, but below we show that the same arguments can also be made to work for general $T$. 

\begin{proof}
We fix a Weyl group invariant test function $B\in C_c^\infty(i(\a_{M_0}^G)^*)^{W_0}$ with $B(0)=1$. As in \cite[pp.1273]{arthur1982family-I}, for $\epsilon>0$ and $\pi\in\Pi_2(M)$ we define
\[B^\epsilon_\pi(\lambda):=B\left(\epsilon(i\Im(\mu_\pi)+\lambda)\right),\quad\lambda\in i(\a_L^G)^*,\]
where $\mu_\pi\in \a_{M_0,\C}^*$ is the infinitesimal character of $\pi$. Combining \cite[Theorem 6.3(ii)]{arthur1982family-I} and \cite[Theorem 4.1]{arthur1982family-II} we obtain that $J_\chi^T(h)$ is equal to the limit of
\[\sum_{P\supset P_0} \frac{1}{n_P}\sum_{L\in\calL(M_P)}\sum_{w\in W^L(M_P)}\iota_w \sum_{\pi \in \Pi_2(M_P)}\intop_{i(\a_{L}^G)^*} 
\tr \left(\calM^T_{L}(P,\lambda)M(P,w)\rho(P,\lambda,h)|_{\calA^2_{\chi,\pi}(P)}\right) B_\pi^\epsilon(\lambda) \d \lambda\]
as $\epsilon\to 0$. Now if we show that for each $P\in\calP$, $L\in\calL(M_P)$, and $w\in W^L(M_P)$
\[\sum_{\pi \in \Pi_2(M_P)}\intop_{i(\a_{L}^G)^*} 
\left|\tr \left(\calM^T_{L}(P,\lambda)M(P,w)\rho(P,\lambda,h)|_{\calA^2_{\chi,\pi}(P)}\right)\right|\d \lambda<\infty\]
then by dominated convergence, we can pass the $\epsilon\to 0$ limit inside and conclude.

Now we work as in \cite[\S5.1]{finis2011spectral-abs} to show the above absolute convergence. As the intertwiner $M(P,w)$ commutes with the group action $\rho$ and is unitary (see \cite[pp.184]{finis2011spectral-abs} we have
\[\tr \left(\calM^T_{L}(P,\lambda)M(P,w)\rho(P,\lambda,h)|_{\calA^2_{\chi,\pi}(P)}\right)\le \left\|\calM^T_{L}(P,\lambda)\rho(P,\lambda,h)|_{\calA^2_{\chi,\pi}(P)}\right\|_1.\]
Here on the right hand side above and elsewhere, $\|\cdot\|_1$ denotes the trace norm of the corresponding operator.
Moreover, using \cref{prop:FLM formula} it suffices to show that
\[\sum_{\pi \in \Pi_2(M_P)}\intop_{i(\a_{L}^G)^*}\left\|\Delta_\xi^T(P,\lambda)\rho(P,\lambda,h)|_{\calA^2_{\chi,\pi}(P)}\right\|_1 \d \lambda <\infty,\]
for any $\underline{\beta}\in\calB_{P,L}$ and $\xi\in\Xi_L(\underline{\beta})$.
As $h$ is $K_f$-finite, there exists an open-compact $K_h\subset K_f$ such that $h$ is right $K_h$-invariant. Now decomposing into $K_\infty$-types it suffices to show that
\[\sum_{\tau\in\widehat{K_\infty}}\sum_{\pi \in \Pi_2(M_P)}\intop_{i(\a_{L}^G)^*}\left\|\Delta_\xi^T(P,\lambda)\rho(P,\lambda,h)|_{\calA^2_{\chi,\pi}(P)^{\tau,K_h}}\right\|_1 \d \lambda <\infty.\]

Working, as in \cite[\S5.1]{finis2011spectral-abs}, with the Laplacian on $G(F_\infty)$, noting that $h$ is compactly supported, and using $\|A\mid_V\|_1\le\dim(V)\|A\|_{\op}$, we obtain the above integral is bounded by
\[\ll_{h,N} \dim\left(\calA_{\chi,\pi}^2(P)^{\tau,K_h}\right)\intop_{i(\a_{L}^G)^*}(1+\|\lambda\|+\nu(\pi_\infty)+\nu(\tau))^{-N}\left\|\Delta^T_{\xi}(P,\lambda)\mid_{\calA^2_{\chi,\pi}(P)^{\tau,K_h}}\right\|_{\op}\d\lambda,\]
for any $N>0$. Now we use \cref{eq:Delta} and work as in the proof of \cref{prop:calM bound} to bound
\[\left\|\Delta^T_{\xi}(P,\lambda)\mid_{\calA^2_{\chi,\pi}(P)^{\tau,K_h}}\right\|_{\op}\ll_T \prod_{j}
\left(1+\left\|\delta_{P_j|P_j^\prime}(i\vartheta_jt_j)|_{\calA^2_\pi(P'_j)^{\tau,K_h}}\right\|_{\op}\right),
\]
where, following the notation in the proof of \cref{prop:calM bound}, above we have written $\lambda=i\sum_jt_j\vartheta_j$ with $\vartheta_j\in(\a_L^G)^*$.
Finally, from here on proceeding as in \cite[\S5.1]{finis2011spectral-abs} we conclude the proof.
\end{proof}

\vspace{1cm}

We denote $\calA^2_{\pi}(P):=\bigoplus_{\chi}\calA^2_{\chi,\pi}(P)$ and $\calB_\pi(P):=\bigcup_\chi\calB_{\chi,\pi}(P)$, which is an orthonormal basis of $\calA^2_\pi(P)$. On the other hand, the sum
\begin{equation}\label{defn-JT}
    J^T(h):=\sum_{\chi}J_\chi^T(h),
\end{equation}
where $\chi$ runs over all equivalence classes of cuspidal data, is absolutely convergent; see \cite[pp.80]{arthur2005intro}.

\begin{prop}\label{prop:spectral-side-full}
Let $T$ be as in \cref{prop:pre-trace}. Then
\begin{align*}
&J^T(h)= \sum_{P\supset P_0} \frac{1}{n_P}\sum_{\pi \in \Pi_2(M_P)} \sum_{\varphi \in \calB_{\pi}(P)}\intop_{i (\a_P^G)^*} \langle \Lambda^T \Eis(\rho(P,\lambda,h)\varphi,\lambda),\Lambda^T \Eis(\varphi,\lambda) \rangle_{\calA^2(G)} \d \lambda\\
&=\sum_{P\supset P_0} \frac{1}{n_P}\sum_{\pi \in \Pi_2(M_P)}\sum_{L\in\calL(M_P)}\sum_{w\in W^L(M_P)}\iota_w \intop_{i(\a_{L}^G)^*} \tr \left(\calM^T_{L}(P,\lambda)M(P,w)\rho(P,\lambda,h)|_{\calA^2_{\pi}(P)}\right) \d \lambda,
\end{align*}
where $\iota_w$ and $M(P,w)$ are as in \cref{prop:spectral side}.
\end{prop}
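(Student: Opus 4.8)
The plan is to derive both identities by summing the corresponding statements of \cref{prop:pre-trace} and \cref{prop:spectral side} over all equivalence classes $\chi$ of cuspidal data, and then to verify that the rearrangements involved are legitimate. First I would recall that $J^T(h) = \sum_\chi J_\chi^T(h)$ by the definition \eqref{defn-JT}, and that this sum converges absolutely by \cite[pp.80]{arthur2005intro}. Since $T$ is assumed as in \cref{prop:pre-trace}, i.e. $d(T)$ is large relative to $\supp(h)$, each summand $J_\chi^T(h)$ is given by the formula of \cref{prop:pre-trace}; summing over $\chi$ and using $\calB_\pi(P) = \bigsqcup_\chi \calB_{\chi,\pi}(P)$ produces the first displayed identity, provided the resulting sum over $(\chi,P,\pi,\varphi)$ together with the $\lambda$-integral is absolutely convergent so that the order of summation is immaterial. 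For each fixed $\chi$ the inner $\pi$-sum is finite, and the $\chi$-sum converges by Arthur's estimates; this is exactly the content underlying \cref{prop:pre-trace}.

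For the second identity I would apply \cref{prop:spectral side} to each $J_\chi^T(h)$ and again sum over $\chi$. The key structural point is that $\calA^2_\pi(P) = \hat{\bigoplus}_\chi \calA^2_{\chi,\pi}(P)$ is an orthogonal decomposition stable under $\calM^T_L(P,\lambda)$, under $M(P,s)$, and under $\rho(P,\lambda,h)$, so the trace over $\calA^2_\pi(P)$ equals the sum over $\chi$ of the traces over $\calA^2_{\chi,\pi}(P)$. To justify moving this $\chi$-sum inside the $\lambda$-integral, I would re-run the absolute-convergence argument from the proof of \cref{prop:spectral side}: decompose into $K_\infty$-types $\tau$ and fix an open-compact $K_h$ by which $h$ is right-invariant, bound the trace norm $\|\Delta_\xi^T(P,\lambda)\rho(P,\lambda,h)|_{\calA^2_{\chi,\pi}(P)^{\tau,K_h}}\|_1$ via $\|A|_V\| \le \dim(V)\|A\|_{\op}$, use the Laplacian on $G(F_\infty)$ together with the compact support of $h$ to gain arbitrary polynomial decay in $\|\lambda\| + \nu(\pi_\infty) + \nu(\tau)$, and estimate the operator norm of $\Delta_\xi^T(P,\lambda)$ by \cref{lem:delta bound} as in the proof of \cref{prop:calM bound}. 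The additional sum over $\chi$ is then absorbed exactly as in \cite[\S5.1]{finis2011spectral-abs}, where sums over all of $\Pi_2(M_P)$ — hence, implicitly, over all cuspidal data — are already handled.

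The main obstacle is precisely this last absolute-convergence check: one must simultaneously control the sum over all cuspidal data, all standard parabolics $P$, all $\pi\in\Pi_2(M_P)$, and all $K_\infty$-types $\tau$, uniformly in the $\lambda$-integral. This is where the Finis--Lapid--M\"uller machinery is indispensable — the trace-class estimates for the intertwining operators, uniform admissibility, and the polynomial bounds in the Casimir eigenvalue — together with \cref{lem:delta bound} (coming from properties (TWN+) and (BD)) to control the operator norms. Once that estimate is in place, both formulas follow formally from \cref{prop:pre-trace}, \cref{prop:spectral side}, and the orthogonal decomposition of $\calA^2_\pi(P)$ over $\chi$; it is worth noting that for the first identity one needs no new input beyond Arthur's convergence of $\sum_\chi J_\chi^T(h)$, while for the second the Finis--Lapid--M\"uller argument does the real work.
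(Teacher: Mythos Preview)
Your approach is essentially what the paper does: its proof is a one-line ``Applying \cref{prop:pre-trace} and \cref{prop:spectral side} (or, re-working the proofs of them)''. Summing over $\chi$ and re-running the absolute-convergence argument of \cref{prop:spectral side} is exactly the intended ``re-working''.

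One point deserves correction, however. You invoke \cref{lem:delta bound} and explicitly say that properties (TWN+) and (BD) are indispensable for the absolute-convergence step. They are not: \cref{prop:spectral-side-full} carries no such hypotheses, and the paper's own proof of \cref{prop:spectral side} does \emph{not} use \cref{lem:delta bound} for this purpose. After bounding $\|\Delta_\xi^T(P,\lambda)\|_{\op}$ by the product of $\|\delta_{P_j|P_j'}\|_{\op}$ via unitarity of the intertwiners, the proof hands off to \cite[\S5.1]{finis2011spectral-abs}, whose estimates on $\delta_{Q|P}$ are unconditional (they rely on the general analytic properties of the local normalized intertwiners, not on (TWN+) or (BD)). So replace your appeal to \cref{lem:delta bound} by the corresponding unconditional bound from \cite[\S5.1]{finis2011spectral-abs}; otherwise you have proved a weaker statement than the one claimed. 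With that adjustment, both identities follow exactly as you describe.
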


\begin{proof}
The proofs are similar (almost verbatim) to those of \cref{prop:pre-trace} and \cref{prop:spectral side}, after replacing $\calA^2_{\chi,\pi}$ and $\calB_{\chi,\pi}$ with $\calA^2_\pi$ and $\calB_\pi$, respectively.
\end{proof}

\section{Bounds for a specific Maximal Compact}

In this section we prove \cref{thm:BCF}. 
First, we develop some necessary local ingredients that go in the proof.

\subsection{Choice of test function}\label{sec:constructing-test-function}
We freely use the notations of \cref{subsec:spherical function}. Recall that $\a_\infty$ is the Lie algebra of a maximal $\R$-split torus of $G(F_\infty)$. 
Let $\delta>0$ be a sufficiently small constant. We choose a fixed $L^1$-normalized non-negative test function $f_{0,\delta}$ on $\a_\infty$ supported on the ball of radius $\delta/2$ around $0$. Let $f_{\delta}:=f_{0,\delta}*f_{0,\delta}^*$ where for any $f\in C_c(\a_\infty)$ we define $f^*(\alpha) := \overline{f(-\alpha)}$. It holds that $\widehat{f^*}(\mu) = \overline{\hat{f}(-\overline{\mu})}$. Thus, $\widehat{f_{\delta}}$ is non-negative on $i\a_\infty^*$. Note that $f_{\delta}$ is also non-negative, $L^1$-normalized, and supported on the ball of radius $\delta$ around $0$. 

We fix
\begin{equation}\label{eq:value-of-c0}
    C_0:=2\|\rho_\infty\|+1.
\end{equation}
Now we make $\delta>0$ small enough so that
\begin{equation}\label{eq:localization-continuity}
    \Re(\widehat{f_\delta}(\mu))\ge 1/2,\quad\text{ for }\|\mu\|\le C_0.
\end{equation}
This follows from continuity and the fact that $\widehat{f_\delta}(0)=1$.

Given $\mu_0 \in i\a_\infty^*$, we define $f_{\mu_0} \in C_c^\infty(\a_\infty)^{W_\infty}$ by
\[
f_{\mu_0}(\alpha) := \sum_{w \in W_\infty} f_\delta(w\alpha)e^{-\langle \mu_0,w\alpha \rangle},
\]
so that
\[\widehat{f_{\mu_0}}(\mu)=\sum_{w\in W_\infty}\widehat{f_\delta}(w\mu-\mu_0),\quad \mu\in\a^*_{\infty,\C}.\]
Finally, we define
\[h_{\mu_0} := \calS^{-1} (f_{\mu_0}^* *f_{\mu_0})\in C_c^\infty(K_\infty\backslash G(F_\infty)/K_\infty).\]
Thus the spherical transform of $h_{\mu_0}$ is \[\tilde{h}_{\mu_0}(\mu)=\widehat{f_{\mu_0}}(\mu)\overline{\widehat{f_{\mu_0}}(-\overline{\mu})};\]
see \cref{sec:spherical-transform}.
Then $h_{\mu_0}$ satisfies the following properties:
\begin{enumerate}
    \item\label{supp-h} The support of $h_{\mu_0}$ is bounded independently of $\mu_0$, which follows from \cref{supp-condition}.
    \item\label{nonnegative-h} $\tilde{h}_{\mu_0}(\mu) \ge 0$, if $\mu = -w\overline{\mu}$ for some $w\in W_\infty$; in particular, if $\mu\in i\a_\infty^*$ or more generally, if $\mu$ is the Langlands parameter of a unitary representation. This is immediate from $W_\infty$-invariance of $\widehat{f_{\mu_0}}$.
    \item\label{rapid-decay-h} There is a constant $b$ depending only on the support of $f_\delta$ such that for every $N>0$ it holds that 
    \[
    |\tilde{h}_{\mu_0}(\mu)| \ll_{N} \sum_{w \in W_\infty}\exp(b\|\Re(\mu)\|)(1+\|w\mu-\mu_0\|)^{-N},
    \]
    which follows from \cref{paley-wiener-thm}.
    \item\label{localization-h} For $C_0$ as in \cref{eq:value-of-c0} we have that if $\|\mu -\mu_0\| \le C_0$ then $|\tilde{h}_{\mu_0}(\mu)| \ge 1/10$, perhaps after making $\delta>0$ smaller. 
\end{enumerate}

To see the last claim it suffices to show that
\[\Re\left(\widehat{f_{\mu_0}}(\mu)\right) \ge 1/3,\quad\text{ for }\|\mu-\mu_0\|=\|-\overline{\mu}-\mu_0\|<C_0.\]
Working similarly to the proof of \cite[Lemma 3.2]{marshall2014upper}, below we prove the above sufficient formulation. We see that
\begin{align*}
    \widehat{f_\delta}(\mu)
    &=\intop_{\a_\infty}f_\delta(\alpha)\left(e^{i\langle\Im(\mu),\alpha\rangle}-e^{i\langle\Im(\mu),\alpha\rangle}+e^{\langle\mu,\alpha\rangle}\right)\d\alpha\\
    &=\widehat{f_\delta}\left(i\Im(\mu)\right) + \intop_{\a_\infty}f_\delta(\alpha)e^{i\langle\Im(\mu),\alpha\rangle}\left(e^{\langle\Re(\mu),\alpha\rangle}-1\right)\d\alpha.
\end{align*}
As $\|\Re(\mu)\|\le C_0$ we have $|e^{\langle\Re(\mu),\alpha\rangle}-1|<C_1\delta$ for $\alpha$ lying in a $\delta$-radius ball around the origin and for certain $C_1$ depending on $C_0$. Consequently, we have
\[\Re(\widehat{f_\delta}(\mu)) \ge - C_1\delta,\]
as $\widehat{f_\delta}\left(i\Im(\mu)\right)\ge 0$.
Thus upon applying \cref{eq:localization-continuity}, and the fact that 
\[\|\Re(w\mu-\mu_0)\|= \|\Re(\mu)\|=\|\Re(\mu-\mu_0)\|\le \|\mu-\mu_0\|\le C_0\] 
we obtain
\begin{equation*}
    \Re(\widehat{f_{\mu_0}}(\mu))
    =\Re(\widehat{f_\delta}(\mu-\mu_0))+\sum_{w\neq 1}\Re(\widehat{f_\delta}(w\mu-\mu_0))
    \ge 1/2-C_2 \delta,
\end{equation*}
for some $C_2$ depending on $C_0$ and the group. We conclude by making $\delta$ sufficiently small.

\subsection{Langlands parameters}

Let $\chi:=(M_1,\pi_1)$ be a cuspidal datum, determined by a Levi subgroup $M_1$ and a representation $\pi_1 \in \Pi_2(M_1)$. Let $P_1 \supset P_0$ be such that $M_1 = M_{P_1}$. We assume that $\pi_{1,\infty}$ is spherical, i.e., has a non-zero $K_\infty \cap M_1(F_\infty)$-invariant vector, so we may associate to it a Langlands parameter $\mu_{\pi_1}$. 

We first notice that $\chi$ is actually an equivalence class, whose size by \cite[\S 12]{arthur2005intro}, is at most \[
n_{P_1} = \sum_{P_0 \subset P'\sim P_1} |W(\a_{P_1},\a_{P'})|.
\]
We let $(M_1,\pi_1),...,(M_k,\pi_k)$ be the elements in this equivalence class, with $k\le n_{P_1} \le n_{P_0}$, and $M_i = M_{P_i}$ for $P_i \supset P_0$, all associated with $P_1$. Notice that the Langlands parameter $\mu_{\pi_i}$ is in the $W_\infty$-orbit of $\mu_{\pi_1}$, so $\|\mu_{\pi_i}\|=\|\mu_{\pi_1}\|$.

Now, let $P\supset P_0$ and $\pi\in\Pi_2(M_P)$ so that $\calA_{\chi,\pi}^2(P)^{K_\infty K}\neq\{0\}$. It follows from the Langlands construction of the residual spectrum that $\pi$ is defined using iterated residues from an Eisenstein series of one of the $\pi_i$, $1\le i\le k$, for some $P_i \sim P$. 
Let $\rho_\pi \in (\a_{M_i}^M)^* \subset (\a_{M_0}^G)^* \subset (\a_\infty)^*$ be the point where the residual representation is defined by iterated residues (see e.g., \cite[\S2]{arthur1982inner}). 
Then $\mu_\pi = \mu_{\pi_i}+\rho_\pi$. The crucial point here is that $\rho_\pi$ is real and universally bounded (in fact, $\|\rho_\pi\|\le\|\rho_\infty\|$, where $\rho_\infty$ is as in \cref{subsec:spherical function}). Therefore, $\|\mu_{\pi}-\mu_{\pi_i}\|\le \|\rho_\infty\|$ and
\begin{equation}\label{lem:size-laplace-ev-cuspidal-data}
1+\|\mu_{\pi}\|\asymp 1+\|\mu_{\pi_i}\|.
\end{equation}
The above properties of $\rho_\pi$ follow from Langlands' construction of the residual spectrum; see e.g. \cite[pp.1152, Proof of Lemma 4.3]{muller2000singularities}.

Next, the Langlands parameter of $\Ind_{P(F_\infty)}^{G(F_\infty)}\left(\pi_\infty\otimes e^{\langle \lambda, H_{M_P}(\cdot) \rangle}\right)$ for $\lambda\in(\a_{P,\C}^G)^*$ is $\mu_{\pi,\lambda}:=\mu_\pi+\lambda$.\footnote{Here we identify $i(\a_P^G )^* \subset \a_{\infty,\C}^*$.} If $\lambda\in i(\a_P^G)^*$ then this induction is unitary. Correspondingly, it holds that
\begin{itemize}
    \item $\|\Re(\mu_{\pi,\lambda})\|\le \|\rho_{\infty}\|$, and 
    \item there is a $w\in W_\infty$ so that $-\overline{\mu_{\pi,\lambda}}=w\mu_{\pi,\lambda}$ (as a matter of fact, we may choose $w \in N_{M_P(F_\infty)}(T_\infty)/T_\infty \subset W_\infty$);
\end{itemize}
see, e.g., \cite[Proposition 3.4]{Duistermaat1979spectra}.

\vspace{0.5cm}

We now prove \cref{thm:BCF}.

\begin{proof}[Proof of \cref{thm:BCF}]



We use property (3) in \cref{sec:truncation} to assume that $T$ in \cref{thm:BCF} is such that $d(T)$ is sufficiently large (in terms of $G$ and $F$) but fixed.

We assume that the cuspidal datum $\chi:=(M,\pi_0)$ with the equivalence class $(M_1,\pi_1),\dots,(M_k,\pi_k)$ as above. Recall that $\|\mu_{\pi_0}\|\asymp \|\mu_{\pi_i}\|$.

Let $i\a_\infty \ni \mu_j := i \Im (\mu_{\pi_j})+\lambda_0$ for $j=1,\dots,k$. We construct $h_{\mu_j}$ as in \cref{sec:constructing-test-function} with the choice of $C_0$ given in \cref{eq:value-of-c0}.
Now let $h_j \in C_c^\infty(G(\bbA)^1)$ be of the form 
\[h_j(g_\infty,g_f) := h_{\mu_j}(g_\infty)\frac{\One_K(g_f)}{\vol(K)}.\]
Note that \cref{supp-h} implies that support of $h_j$ has bounded measure.

Clearly, for any $P\supset P_0$ and $\lambda\in i(\a_{P}^G)^*$ the operator $\rho(P,\lambda,h_j)$ projects on $\calA^2(P)^{K_\infty K}$. Moreover, for any $\pi\in \Pi_2(M_P)$ and $\varphi\in\calA^2_{\chi,\pi}(P)^{K_\infty K}$ we have
\[\rho(P,\lambda,h)\varphi = \tilde{h}_{\mu_j}(\mu_{\pi,\lambda})\varphi.
\]
Recall from the discussion about the Langlands parameters above that there exists $w\in W_\infty$ so that $w\mu_{\pi,\lambda}=-\overline{\mu_{\pi,\lambda}}$.
Thus using \cref{nonnegative-h} we obtain that \[\tilde{h}_{\mu_j}(\mu_{\pi,\lambda})\ge 0.\]

On the other hand, if $\|\lambda-\lambda_0\|\le 1$, and $P\supset P_0$ and $\pi\in\Pi_2(M_P)$ are such that $\calA_{\chi,\pi}^2(P)\neq\{0\}$ then by the discussion above there is $1\le j\le k$ such that
\begin{equation*}
\|\mu_{\pi,\lambda}-\mu_j\|\le \|\Re(\mu_{\pi_j})\| + \|\rho_\pi\|+ \|\lambda-\lambda_0\| \le 2\|\rho_\infty\|+1 =C_0
\end{equation*}
Hence, applying \cref{localization-h} it holds that
\[
\tilde{h}_{\mu_j}(\mu_{\pi,\lambda}) \ge 1/10.
\]

Now we fix $T\in\a_{M_0}^G$ with sufficiently large $d(T)$ and apply \cref{prop:pre-trace} to write
\[J_\chi^T(h_j) = \sum_{P\supset P_0}\frac{1}{n_P}\sum_{\pi\in\Pi_2(M_P)}\sum_{\varphi\in\calB_{\chi,\pi}(P)^{K_\infty K}}\intop_{i(\a_P^G)^*}\tilde{h}_{\mu_j}(\mu_{\pi,\lambda})\|\Lambda^T\Eis(\varphi,\lambda)\|^2_2\d\lambda.\]
Here $\calB_{\chi,\pi}(P)^{K_\infty K}$ is an orthonormal basis of $\calA_{\chi,\pi}^2(P)^{K_\infty K}$.
Using the above properties of $\tilde{h}_{\mu_j}$ we obtain that
\[
\sum_j J_\chi^T(h_j)\gg \sum_{P\supset P_0}\sum_{\pi\in\Pi_2(M_P)}\sum_{\varphi\in\calB_{\chi,\pi}(P)^{K_\infty K}}\intop_{\substack{{\lambda\in i(\a_P^G)^*} \\ \|\lambda-\lambda_0\|\le 1}}\|\Lambda^T\Eis(\varphi,\lambda)\|^2_2\d\lambda.\]
Note that the right-hand side above equals to the left-hand side of the estimate in \cref{thm:BCF}.
%

Hence, it remains to bound $J_\chi^T(h_j)$ for $j=1,\dots,k$. As the proof is the same for all $j$ without loss of generality we will bound for $j=1$ and write $h=h_1$.

Using \cref{prop:spectral side} we see that 
\[
J_\chi^T(h)\ll \sum_{P\supset P_0}\max_{L\in\calL(M_P)}\max_{w\in W^L(M_P)}\sum_{\pi\in\Pi_2(M_P)}\intop_{i(\a_{L}^G)^*} \tr\left(\calM_{L}^T(P,\lambda)M(P,w) \rho(P,\lambda,h)|_{\calA_{\chi,\pi}^2(P)}\right)\d\lambda.
\]
Recall that $\rho(P,\lambda,h)\mid_{\calA_{\chi,\pi}^2(P)}$ projects on $\calA_{\chi,\pi}^2(P)^{K_\infty K}$ and acts there by the scalar $\tilde{h}_{\mu_1}(\mu_\pi+\lambda)$. Noting that $M(P,w)$ is unitary and commutes with $\rho(P,\lambda,h)$ we get that the the integral on the right-hand side above is bounded by 
\[
\dim\left(\calA_{\chi,\pi}^2(P)^{K_\infty K}\right)\intop_{i(\a_{L}^G)^*}\tilde{h}_{\mu_1}(\mu_\pi+\lambda)
\left\|\calM_{L}^T(P,\lambda)|_{\calA_{\chi,\pi}^2(P)^{K_\infty K}}\right\|_{\op}\d \lambda.
\]
Applying \cref{rapid-decay-h} 
we can bound the inner integral in the above display by
\begin{equation}\label{eq:apply-item-3}
\ll_N\max_{w\in W_\infty}\intop_{i(\a_L^G)^*}\left(1+\|w\lambda+wi\Im(\mu_\pi)-\lambda_0-i\Im(\mu_{\pi_1})\|\right)^{-N}\left\|\calM_{L}^T(P,\lambda)|_{\calA_{\chi,\pi}^2(P)^{K_\infty K}}\right\|_{\op} \d \lambda.
\end{equation}
For each $w\in W_\infty$ we define 
\[\lambda_1:=\lambda_{1,w,\pi,\pi_1}:=w^{-1}\left(\lambda_0-i\Im(w\mu_\pi-\mu_{\pi_1})\right)\in i(\a_\infty)^*.\]
Using \cref{lem:size-laplace-ev-cuspidal-data} we obtain
\[\|\lambda_1\|\ll
\|\lambda_0\|+\|\mu_\pi\|+\|\mu_{\pi_1}\|\,\asymp 
\|\lambda_0\|+\sqrt{\nu(\pi_{1,\infty})}.\]
Thus for each $w$ we bound the the integral on the right-hand side of \cref{eq:apply-item-3} by
\begin{multline}\label{eq:w-integral}
    \intop_{i(\a_L^G)^*}\left(1+\|\lambda-\lambda_1\|\right)^{-N}\left\|\calM_{L}^T(P,\lambda)|_{\calA_{\chi,\pi}^2(P)^{K_\infty K}}\right\|_{\op} \d \lambda\\
    \le\sum_{m=1}^\infty\frac{1}{m^N}\intop_{{\substack{{\lambda\in i(\a_L^G)^*} \\ m-1\le\|\lambda-\lambda_1\|\le m}}}\left\|\calM_{L}^T(P,\lambda)|_{\calA_{\chi,\pi}^2(P)^{K_\infty K}}\right\|_{\op} \d \lambda.
\end{multline}

We can find $\{\eta_l\}_{l=1}^{n}\subset i(\a_L^G)^*$ with  $\|\eta_l\|\le m$ and $n\ll m^{\dim\a_L^G}$ such that
\[\{\lambda:\|\lambda-\lambda_1\| \le m\}\subset \cup_{l=1}^n\{\lambda:\|\lambda-\lambda_1-\eta_l\|\le 1\}.\]
Thus the the integral on the right-hand side of \cref{eq:w-integral} is bounded by
\[\sum_{l=1}^n\intop_{{\substack{{\lambda\in i(\a_L^G)^*} \\ \|\lambda-\lambda_1-\eta_l\|\le 1}}}\left\|\calM_{L}^T(P,\lambda)|_{\calA_{\chi,\pi}^2(P)^{K_\infty K}}\right\|_{\op} \d \lambda.\]
We apply \cref{prop:calM bound} to estimate the above integral and obtain that 
\begin{multline*}
    \intop_{i(\a_{L}^G)^*}\tilde{h}_{\mu_1}(\mu_\pi+\lambda)
    \left\|\calM_{L}^T(P,\lambda)|_{\calA_{\chi,\pi}^2(P)^{K_\infty K}}\right\|_{\op} \d \lambda\\
    \ll_{N}\max_{w\in W_\infty}\sum_{m=1}^\infty\frac{1}{m^N}\sum_{l=1}^n\left(\|T\|\,\log\left(1+\|\lambda_1+\eta_l\|+\nu(\pi_\infty)+\level(K)\right)\right)^{\dim\a_L^G}
\end{multline*}
Using the bounds of $\lambda_1$, $\eta_l$, and $n$ the right hand side above is bounded by
\begin{multline*}
    \ll_{N}\sum_{m=1}^\infty\frac{1}{m^{N-\dim\a_L^G}}\left(\|T\|\,\log\left(1+m+\|\lambda_0\|+\nu(\pi_{1,\infty})+\level(K)\right)\right)^{\dim\a_L^G}\\
    \ll\left(\|T\|\,\log\left(1+\|\lambda_0\|+\nu(\pi_{1,\infty})+\level(K)\right)\right)^{\dim\a_L^G},
\end{multline*}
by making $N$ large enough. Thus we obtain
\begin{multline*}
    J_\chi^T(h) \ll \sum_{P\supset P_0}\max_{L\in\calL(M_P)}\sum_{\pi\in\Pi_2(M_P)}\dim\left(\calA_{\chi,\pi}^2(P)^{K_\infty K}\right)\\
    \times\left(\|T\|\,\log\left(1+\|\lambda_0\|+\nu(\pi_{1,\infty})+\level(K)\right)\right)^{\dim\a_L^G}.
\end{multline*}
We conclude noting that $\max_{L\in\calL(M_P)}\dim\a_L^G=\dim\a_M^G$.
\end{proof}

We now prove \cref{cor:BCF} and \cref{thm:gln-bcf}.

\begin{proof}[Proof of \cref{cor:BCF}]
By symmetries of the Eisenstein series, we may assume that $P\supset P_0$. 
Let $\chi = (M,\pi_0)$ be as in the statement of the corollary. Then by \cref{lem:size-laplace-ev-cuspidal-data} it holds that $1+\nu(\pi_{\infty}) \asymp 1+ \nu(\pi_{0,\infty})$ and the claim follows from \cref{thm:BCF}.
\end{proof}

\begin{proof}[Proof of \cref{thm:gln-bcf}]
We will prove the slightly stronger statement given in \cref{rem:level-gln}.

The classification of the residual spectrum for $\GL_n$ due to M{\oe}glin and Waldspurger \cite{moeglin1995spectral} (see \cref{sec:coarse-spectral-expansion}) implies that for every $\chi$ and $P\supset P_0$ it holds that
\[\#\{\pi\in\Pi_2(M_P)\mid \calA^2_{\chi,\pi}(P)\neq \{0\}\}\ll_n 1.\]

The classification also implies that the multiplicity of $\pi$ in 
$L^2_{\disc,\chi}(A_{M_P}M_P(F) \backslash M(\bbA))$ is at most $1$. So the representation $\rho(P,0,\cdot)$ on $\calA^2_{\chi,\pi}(P)$ is isomorphic to $\Ind_{P(\bbA)}^{G(\bbA)} \pi$, and in particular 
\[
\calA^2_{\chi,\pi}(P)^{K_\infty K} = (\Ind_{P(\bbA)}^{G(\bbA)} \pi)^{K_\infty K}
\]

On the other hand, 
we may write $\pi= \otimes_v^\prime \pi_v$,
and obtain
\[(\Ind_{P(\bbA)}^{G(\bbA)} \pi)^{K_\infty K} = \bigotimes{}^\prime(\Ind_{P(F_v)}^{G(F_v)} \pi_v)^{K_v},\]
where $K_\infty K = \prod_v K_v$.

Now using uniform admissibility due to Bernstein \cite{bernstein1974reductive}, which says that 
\[
\dim(\Ind_{P(F_v)}^{G(F_v)} \pi_v)^{K_v}\ll_{K_v} 1,
\]
we deduce that
\[
F(\chi;\triv,K) = \sum_{P\supset P_0}\sum_{\pi\in\Pi_2(M_P)}\dim \left(\calA^2_{\chi,\pi}(P)^{K_\infty K}\right) \ll_{n,K} 1.
\]

Let $\chi_0$ be a cuspidal datum and $\pi_0\in\Pi_2(M_P)$ such that $\varphi_0\in\calA^2_{\chi_0,\pi_0}(P)^{K_\infty K}$. Then we have $\nu(\varphi_0)\asymp\nu(\pi_{0,\infty})$. Now the result follows from \cref{cor:BCF} along with \cref{gln-twn} and \cref{gln-bd}.
\end{proof}

\section{Optimal Lifting after Assing--Blomer}

In this section, we prove \cref{thm:counting} and \cref{thm:optimal-lifting}. Aligning with our usual convention of suppressing the dependence on $G$ in Vinogradov's notations, we also suppress the dependence on $n$ in this section.

We start with a discussion of the link between the adelic setting of the group $\GL_n$ and the classical setting of $\SL_n(\R)$. A good place to follow is \cite[\S 3]{lapid2009spectral}.

From now on, let $G:=\GL_n$ over $\Q$. We identify $A_G$ as the positive scalar matrices in $G(\R)$. Note that $G(\R)/A_G \cong \SL_n(\R)\times \{\pm1\}$. 
We can therefore identify 
\[
G(\bbA)^1 = (G(\R)/A_G)\times \sideset{}{'}\prod_p G(\Q_p),
\]
where $\prime$ denotes the restricted product relative to $G(\Z_p)$.

Given an integer $q>0$ with prime factorization $q= \prod_{p<\infty} p^{r_p}$ with $r_p\in\Z_{\ge 0}$, let $K(q) \subset G(\bbA_f)$ be the principal congruence subgroup of level $q$. We have $K(q) = \prod_{p<\infty} K_p(q)$, where 
\[K_p(q) := \{g \in G(\Z_p) \mid g\equiv I \mod p^{r_p} \}.\]
The group $\SL_n(\R)$ acts from the right on the space $G(\Q)A_G\backslash G(\bbA)/ K(q)$. The space decomposes into $\varphi(q)$ ($\varphi$ being Euler's totient function) connected components, determined by the determinant map into 
\[
\R_{>0} \Q^\times \backslash \bbA^\times / \prod_{p\mid q} \{a\in \Z_p^\times \mid a\equiv 1 \mod p^{r_p}\Z_p \} \cong (\Z / q\Z)^\times. 
\]
Each orbit is isomorphic to $\Gamma(q) \backslash \SL_n(\R)$, where $\Gamma(q)$ is the principal congruence subgroup of level $q$ of $\Gamma(1):= \SL_n(\Z)$. 

Let $K_\infty^0 = \SO(n)$, which is a maximal compact subgroup of $\SL_n(\R)$, and an index $2$ subgroup of the maximal compact subgroup $K_\infty := \mathrm{O}(n)$ of $G(\R)$.
Let $\|\cdot\|$ denote the Frobenius norm on $\SL_n(\R)$, given by $g\mapsto \sqrt{\tr (g^\top g)}$. We note that the norm is both left and right $K_\infty^0$-invariant. 
It holds that
\[
\| g_1 g_2 \| \le \|g_1\| \| g_2 \|.
\]
For $R>n$ we define \[B_R:=\{g\in \SL_n(\R) \mid \| g \| \le R\}.\]
Note that
\begin{equation}\label{eq:vol of B_R}
    \vol(B_R) = c_n R^{n(n-1)}(1+o(1));
\end{equation}
see \cite[Appendix 1]{duke1993density}, which also calculates the precise coefficient $c_n$. 

We now construct a test function in the archimedean Hecke algebra following \cite[\S2.7]{assing2022density}. Let $h_0\colon \R_{\ge 0}\to [0,1]$ be a smooth non-negative function with $h_0(x)=1$ for $x\in[0,n]$ and $h_0(x)=0$ for $x>2n$. For $R>0$ we define
\[h_R\in C_c^\infty(K_\infty^0 \backslash \SL_n(\R) /K_\infty^0),\quad g\mapsto \frac{1}{\|h_0(\frac{\| \cdot\|}{R})\|_{L^1}}h_0\left(\frac{\| g\|}{R}\right).\]
A direct consequence of \cite[(2.34)-(2.35)]{assing2022density} is that
the spherical transform of $h_R$ satisfies
\begin{equation}\label{non-temp-paley-wiener}
    |\tilde{h}_R(\mu)|\ll_{N,\epsilon}(1+\|\mu\|)^{-N}R^{-\frac{n(n-1)}{2}+n\|\Re(\mu)\|_\infty+\epsilon}
\end{equation}
for $\mu\in\a^*_{0,\C}$ such that $w\mu=-\overline{\mu}$ for some $w\in W_\infty$ (and in particular, for any $\mu\in\a^*_{0,\C}$ corresponding to the Langlands parameters of a unitary representation). Here $\|\mu\|_\infty$ denotes the $\ell^\infty$-norm of $\mu$ when $\a^*_{0,\C}$ is realized as a complex $n$-tuple.

For the non-archimedean part we let $h_q\in C_c^\infty(G(\bbA_f))$ be the normalized characteristic function of $K(q)$, i.e.
\[
h_q := \frac{1}{\vol(K(q))}\One_{K(q)}. 
\]
Let $h_{R,q,1} \in C_c^\infty(G(\bbA)^1)$ be given by 
\[h_{R,q,1}(g_\infty,g_f) := h_R(g_\infty) h_q(g_f).\]
Finally, let $h_{R,q} = h_{R,q,1}*h_{R,q,1}^{*}$, where $h^*(g):=\overline{h(g^{-1})}$. In other words, we have
\[
h_{R,q}(g_\infty,g_f) = (h_R*h_R^*)(g_\infty)h_q(g_f)
\]
Clearly, $h_{R,q}\in C_c^\infty(G(\bbA)^1)$ and is bi-$K_\infty^0K_f$-finite.

The following is the main technical result of this section and will be used to prove \cref{thm:counting} and \cref{thm:optimal-lifting}. First, we recall the definition of $J^T(h)$ from \cref{defn-JT}.
\begin{prop}\label{prop:assing-blomer}
For any $q$ square-free and any $R>1$ it holds that for every $\epsilon>0$
\[
J^T(h_{R,q}) \ll_\epsilon \|T\|^{n-1}(Rq)^\epsilon (q+R^{-n(n-1)}q^{n^2}),
\]
for any $T\in\a_{M_0}$. 
\end{prop}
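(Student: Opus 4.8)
The plan is to estimate $J^T(h_{R,q})$ using the spectral side of the trace formula, exactly as in the proof of \cref{thm:BCF}, but now keeping track of the non-tempered-ness of the representations via the sharper Paley--Wiener bound \eqref{non-temp-paley-wiener} for $\tilde{h}_R$ in place of the bound \eqref{paley-wiener-thm}. First I would invoke \cref{prop:spectral-side-full} to write
\[
J^T(h_{R,q}) = \sum_{P\supset P_0}\frac{1}{n_P}\sum_{\pi\in\Pi_2(M_P)}\sum_{L\in\calL(M_P)}\sum_{s\in W^L(M_P)}\iota_s\intop_{i(\a_L^G)^*}\tr\left(\calM_L^T(P,\lambda)M(P,s)\rho(P,\lambda,h_{R,q})|_{\calA^2_\pi(P)}\right)\d\lambda.
\]
Since $h_{R,q}=h_{R,q,1}*h_{R,q,1}^*$, the operator $\rho(P,\lambda,h_{R,q})$ is positive-semidefinite, and its archimedean part acts on each spherical constituent by the scalar $\tilde{h}_R(\mu_{\pi,\lambda})\overline{\tilde{h}_R(-\overline{\mu_{\pi,\lambda}})}\ge 0$ (using that $\mu_{\pi,\lambda}$ is, up to $W_\infty$, the negative of its conjugate since the induction is unitary), while its non-archimedean part projects onto the $K(q)$-invariants. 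Bounding the trace by $\dim(\calA^2_\pi(P)^{K_\infty^0 K(q)})$ times $\|\calM_L^T(P,\lambda)|_{\calA^2_\pi(P)^{K_\infty^0 K(q)}}\|_{\op}$ times $|\tilde{h}_R(\mu_{\pi,\lambda})|$, and then invoking \cref{prop:calM bound} (valid because $\GL(n)$ satisfies (TWN+) and (BD) by \cref{gln-twn} and \cref{gln-bd}) to get $\|\calM_L^T\|_{\op}\ll (\|T\|\log(1+\nu(\pi_\infty)+\|\lambda\|+\level(K(q))))^{\dim\a_L^G}$, with $\level(K(q))=q^{?}$ — more precisely $\ll q^{O(1)}$ so the log is $O(\log(Rq))$ after the $\lambda$-localization — reduces everything to controlling
\[
\sum_{P\supset P_0}\max_{L}\sum_{\pi\in\Pi_2(M_P)}\dim\left(\calA^2_\pi(P)^{K_\infty^0 K(q)}\right)\intop_{i(\a_L^G)^*}|\tilde{h}_R(\mu_{\pi,\lambda})|\,(\|T\|\log(Rq))^{\dim\a_L^G}\,\d\lambda.
\]

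The next step is to recognize the remaining sum-integral as precisely the quantity estimated in Assing--Blomer. Using \eqref{non-temp-paley-wiener}, $|\tilde{h}_R(\mu_{\pi,\lambda})|\ll_{N,\epsilon}(1+\|\lambda\|)^{-N}R^{-n(n-1)/2+n\|\Re(\mu_\pi)\|_\infty+\epsilon}$; the exponent $\|\Re(\mu_\pi)\|_\infty$ measures how far $\pi_\infty$ is from tempered. Squaring (recall $h_{R,q}$ is a convolution square, so really we get $R^{-n(n-1)+2n\|\Re(\mu_\pi)\|_\infty}$), summing $\dim(\calA^2_\pi(P)^{K(q)})$ weighted by $R^{2n\|\Re(\mu_\pi)\|_\infty}$ over the discrete spectrum with bounded infinitesimal character is exactly a weighted count of the type appearing in \cite[Theorem~7.1]{assing2022density}, whose density estimate gives a bound of the shape $(Rq)^\epsilon(q + R^{-n(n-1)}q^{n^2})$: the term $q$ is the "trivial/tempered" contribution (roughly $\dim$ of $K(q)$-invariants in a tempered piece, which is $\asymp q^{n^2-1}/|\SL_n(\Z/q\Z)|\cdot(\dots)$ — more honestly the residual and Eisenstein contributions combine to $q$), and $R^{-n(n-1)}q^{n^2}$ is the main term coming from the full principal series with the archimedean savings. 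The polynomial factor $\|T\|^{n-1}$ comes from $\dim\a_L^G\le n-1$ in \cref{prop:calM bound}, after the $\log(Rq)$ is absorbed into $(Rq)^\epsilon$. I would need to be a little careful that the $\lambda$-integral and the $\pi$-sum, after using the rapid decay $(1+\|\lambda\|)^{-N}$ and a covering-by-unit-balls argument as in the proof of \cref{thm:BCF}, genuinely match the normalization of \cite[Theorem~7.1]{assing2022density}; this is bookkeeping about how their spectral weight corresponds to our $\dim(\calA^2_{\chi,\pi}(P)^{K(q)})$ summed over all cuspidal data $\chi$ (here the sum over all $\chi$ in $J^T$ is crucial and harmless because Assing--Blomer's bound is over the entire spectrum).

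The main obstacle I expect is twofold. First, the square-free hypothesis on $q$ enters precisely through \cite[Theorem~7.1]{assing2022density} — their density theorem is proved there only for square-free level — so I must make sure not to use anything stronger; the role of square-freeness in my own argument is nil beyond citing their input, which I should state clearly. Second, and more technically delicate, is justifying that one may apply the \emph{spectral} expansion \cref{prop:spectral-side-full} to $h_{R,q}$ with a \emph{fixed} (or only mildly growing) $T$: \cref{prop:pre-trace} and \cref{prop:spectral side} require $d(T)\gg 1+\sup\{\log\|g\|:g\in\supp(h_{R,q})\}$, and $\supp(h_{R,q})$ grows with $R$, so $d(T)$ must be taken $\gg \log R$. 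This is fine for the spectral-side bound (the $\|T\|^{n-1}$ is then $\ll(\log R)^{n-1}\ll R^\epsilon$), but I need to confirm that Arthur's polynomiality and the Finis--Lapid--M\"uller formula \cref{prop:FLM formula} survive with $T$ depending on the support in this controlled way — which they do, since those are uniform in $T$ once $d(T)$ is large enough. Writing $\level(K(q))$ explicitly in terms of $q$ (it is $q$ up to the fixed faithful representation chosen, hence $q^{O(1)}$, contributing only $O_\epsilon((Rq)^\epsilon)$ via the logarithm) is a minor point I would spell out. Everything else is a repackaging of the machinery already assembled in the paper.
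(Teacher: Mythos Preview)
Your proposal is correct and follows essentially the same route as the paper: apply the second (polynomial) expression in \cref{prop:spectral-side-full}, bound the trace by $\dim\cdot\|\calM_L^T\|_{\op}\cdot|\tilde h_R|^2$, control $\|\calM_L^T\|_{\op}$ via \cref{prop:calM bound} and a covering-by-unit-balls argument, then feed the remaining weighted spectral count $\sum_\pi \dim(\calA^2_\pi(P)^{K_\infty^0 K(q)})R^{2n\|\Re(\mu_\pi)\|_\infty}$ into \cite[Theorem~7.1]{assing2022density} (the paper isolates this last step as \cref{lem:assing-blomer}, splitting into $R\lessgtr q^{1+1/n}$). One clarification on your second ``obstacle'': the support constraint $d(T)\gg\log R$ is only needed for the \emph{pre-trace} equality in \cref{prop:spectral-side-full}; the spectral-side formula with $\calM_L^T$ is the polynomial expression of $J^T$ and holds for all $T$ by \cref{prop:spectral side}, so the proposition is genuinely valid ``for any $T$'' and no coupling of $T$ to $R$ is required at this stage.
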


\begin{remark}
    As will be evident from the proof, the summand $q$ comes from the $\varphi(q)$ possible Dirichlet characters modulo $q$. The factor $R^{-n(n-1)}q^{n^2}$ comes from the contribution of cusp forms. The number of such forms with bounded Laplacian eigenvalues is $\asymp [K(1):K(q)] \ll q^{n^2}$ and the contribution of each such form to $J^T(h_{R,q})$ \emph{assuming the Generalized Ramanujan Conjecture (GRC)} is $R^{-n(n-1)}$. The results of Assing and Blomer \cite[Theorem 1.1]{assing2022density} on the \emph{Density Hypothesis} allow us to prove the proposition unconditionally of the GRC. However, a significant difficulty arises to control the continuous spectrum, as also experienced in \cite{assing2022density}, managing that is the main contribution of \cref{prop:assing-blomer}.
\end{remark}

We will assume \cref{prop:assing-blomer} for now and prove \cref{thm:counting} and \cref{thm:optimal-lifting}. 
In the next subsection, we prove \cref{prop:AB main prop} using \cref{prop:assing-blomer} and then deduce these two theorems from it in the following two subsections. At the end of this section, we prove \cref{prop:assing-blomer}.

\subsection{Initial reductions}

Let us fix $R_0>n$ and a compact domain $\Omega \subset G(\Q)\backslash G(\bbA)^1$ of the form $\Omega := G(\Q) \backslash B_{R_0}\times K(1)$. The implied constants in various estimates below are allowed to depend on $R_0$, however, we will not mention it in the notations.
We define the adelic kernel $K_{h_{R,q}}(x,y)$ by 
\[
K_{h_{R,q}}(x,y) := \sum_{\gamma \in G(\Q)} h_{R,q}(x^{-1}\gamma y),
\]
for $x,y \in G(\bbA)^1$.
\begin{prop}\label{prop: AB reduction1}
There is a constant $C$ (possibly depending on $R_0$) such that if $d(T) \ge C (1+\log (R))$ then it holds that 
\[
\intop_\Omega K_{h_{R,q}}(x,x) \d x \le J^T(h_{R,q}),
\]
for any $R$ and $q$.
\end{prop}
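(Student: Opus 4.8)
The plan is to deduce the inequality by rewriting $J^T(h_{R,q})$ as the integral, over \emph{all} of $G(\Q)\backslash G(\bbA)^1$, of a pointwise nonnegative function which agrees with the diagonal kernel $x\mapsto K_{h_{R,q}}(x,x)$ on the fixed compact set $\Omega$; the claimed bound then follows by discarding the (nonnegative) contribution coming from the complement of $\Omega$.

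First I would record a support bound for $h_{R,q}$. Since $h_R$ is supported in $\{g\in\SL_n(\R):\|g\|\le 2nR\}$ and the Frobenius norm is submultiplicative, $h_R*h_R^*$ is supported in $\{\|g\|\le (2nR)^2\}$; as $h_q$ is supported in the fixed compact $K(q)\subset K_f$, this yields $\sup\{\log\|g\|:g\in\supp(h_{R,q})\}\ll 1+\log R$ for the height function $\|\cdot\|$ on $G(\bbA)$ appearing in \cref{prop:pre-trace}. Hence one can fix a constant $C$ — depending only on $n$, on the constant $C_0$ of \cref{prop:pre-trace}, and on $\Omega$ — so that $d(T)\ge C(1+\log R)$ simultaneously (i) makes \cref{prop:pre-trace}, and therefore \cref{prop:spectral-side-full}, applicable to $h=h_{R,q}$, and (ii) makes $d(T)$ large enough relative to the fixed compact $\Omega$ that property~(2) of \cref{sec:truncation} applies: $\Lambda^Tf(g)=f(g)$ for all $g\in\Omega$ and all locally integrable $f$.

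The heart of the argument is a kernel reformulation of $J^T(h_{R,q})$. Writing $\Lambda^T_x,\Lambda^T_y$ for the truncation operator applied in the first, resp.\ second, variable of a function on $\bigl(G(\Q)\backslash G(\bbA)^1\bigr)^2$, I would start from the Langlands spectral decomposition
\[
K_{h_{R,q}}(x,y)=\sum_{P\supset P_0}\frac{1}{n_P}\sum_{\pi\in\Pi_2(M_P)}\sum_{\varphi\in\calB_\pi(P)}\intop_{i(\a_P^G)^*}\Eis(\rho(P,\lambda,h_{R,q})\varphi,\lambda)(x)\,\overline{\Eis(\varphi,\lambda)(y)}\,\d\lambda,
\]
apply $\Lambda^T$ term by term in each variable (legitimate since $\Lambda^T$ is a contraction on $L^2(G(\Q)\backslash G(\bbA)^1)$ by property~(3) of \cref{sec:truncation}, so it commutes with the spectral decomposition), set $y=x$, and integrate over $G(\Q)\backslash G(\bbA)^1$; interchanging the $x$-integral with the spectral sum (justified by the rapid decay in property~(1) of \cref{sec:truncation} and absolute convergence) recovers exactly the spectral expression of \cref{prop:spectral-side-full}. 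Thus
\[
J^T(h_{R,q})=\intop_{G(\Q)\backslash G(\bbA)^1}\left.\bigl(\Lambda^T_x\Lambda^T_y K_{h_{R,q}}(x,y)\bigr)\right|_{y=x}\,\d x .
\]
Now I would exploit the factorization $h_{R,q}=h_{R,q,1}*h_{R,q,1}^*$: the associated identity of operators on $L^2(G(\Q)\backslash G(\bbA)^1)$ gives the kernel factorization
\[
K_{h_{R,q}}(x,y)=\intop_{G(\Q)\backslash G(\bbA)^1}K_{h_{R,q,1}}(x,z)\,\overline{K_{h_{R,q,1}}(y,z)}\,\d z ,
\]
the $z$-integral converging absolutely because, $h_{R,q,1}$ being compactly supported, $z\mapsto K_{h_{R,q,1}}(x,z)$ is compactly supported for each fixed $x$. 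Since $\Lambda^T$ has real coefficients it commutes with conjugation and with the $z$-integration, so for \emph{every} $x$
\[
\left.\bigl(\Lambda^T_x\Lambda^T_y K_{h_{R,q}}(x,y)\bigr)\right|_{y=x}=\intop_{G(\Q)\backslash G(\bbA)^1}\bigl|\bigl(\Lambda^T K_{h_{R,q,1}}(\cdot,z)\bigr)(x)\bigr|^2\,\d z\ \ge\ 0 ;
\]
and for $x\in\Omega$, property~(2) of \cref{sec:truncation} replaces $\bigl(\Lambda^T K_{h_{R,q,1}}(\cdot,z)\bigr)(x)$ by $K_{h_{R,q,1}}(x,z)$, so this quantity equals $\int_{G(\Q)\backslash G(\bbA)^1}|K_{h_{R,q,1}}(x,z)|^2\,\d z=K_{h_{R,q}}(x,x)$ there. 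Combining the last three displays and dropping the nonnegative contribution of $x\notin\Omega$ yields $J^T(h_{R,q})\ge\int_\Omega K_{h_{R,q}}(x,x)\,\d x$, which is the claim.

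I expect the main obstacle to be the justification of the kernel reformulation $J^T(h_{R,q})=\int\bigl(\Lambda^T_x\Lambda^T_y K_{h_{R,q}}\bigr)(x,x)\,\d x$: one must apply $\Lambda^T$ term by term in each variable of a spectral expansion that converges only conditionally and then interchange the resulting series with the $x$-integration. This is precisely the kind of manipulation underpinning Arthur's construction of the truncated kernel and is effectively packaged into \cref{prop:pre-trace} and \cref{prop:spectral-side-full}, so it amounts to careful bookkeeping rather than a genuinely new difficulty; the remaining ingredients — the support bound, the choice of $C$, the kernel factorization, and the pointwise positivity — are elementary.
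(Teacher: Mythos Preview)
Your argument is correct, but it takes a somewhat different route from the paper's. The paper never introduces the doubly-truncated kernel identity $J^T(h)=\int(\Lambda^T_x\Lambda^T_y K_h)(x,x)\,\d x$. Instead it stays entirely inside the spectral expansion: it integrates the Langlands decomposition of $K_{h_{R,q}}(x,x)$ over $\Omega$, observes that on each $K_\infty^0K(q)$-invariant basis vector $\varphi$ the operator $\rho(P,\lambda,h_{R,q})$ acts by the nonnegative scalar $|\tilde h_R(\mu_\pi+\lambda)|^2$ (this is where the factorization $h_{R,q}=h_{R,q,1}*h_{R,q,1}^*$ is used, spectrally rather than geometrically), then uses property~(2) of truncation to bound $\int_\Omega|\Eis(\varphi,\lambda)|^2=\int_\Omega|\Lambda^T\Eis(\varphi,\lambda)|^2\le\|\Lambda^T\Eis(\varphi,\lambda)\|_2^2$ term by term, and finally recognizes the resulting sum as the right-hand side of \cref{prop:spectral-side-full}. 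Your approach instead packages the positivity at the kernel level via the sum-of-squares factorization of $K_{h_{R,q}}$, which is conceptually cleaner and makes the pointwise nonnegativity of the truncated diagonal transparent; the price is the identity you flag as the main obstacle. That identity is in fact essentially Arthur's original \emph{definition} of $J^T_\chi$ (as the integral of the kernel truncated in one variable), combined with self-adjointness and idempotence of $\Lambda^T$, so you could cite it directly rather than rederive it through the spectral side---doing so would remove your only nontrivial justification step and make your argument shorter than the paper's.
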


\begin{proof}
    We start with the spectral decomposition of $K_{h_{R,q}}(x,x)$ (see e.g., \cite[Equation (7.6)]{arthur2005intro}) which implies that
    \begin{align*}
    &\intop_{\Omega} K_{h_{R,q}}(x,x) \d x \\
    =& \sum_{P\supset P_0}\frac{1}{n_P} \sum_{\pi \in \Pi_2(M_P)} \sum_{\varphi  \in \calB_\pi(P)} \intop_{i (\a_P^G)^*}\intop_\Omega \Eis(\rho(P,\lambda,h_{R,q})\varphi,\lambda)(x)\overline{\Eis(\varphi,\lambda)(x)}  \d x \d \lambda.
    \end{align*}
    Note that absolute convergence of the spectral side \cite{finis2011spectral-abs} allows us to pull the $\Omega$-integral inside. Here we choose a basis $\calB_\pi(P)$ such that each of its elements is either $K_\infty^0K(q)$-invariant, or orthogonal to the $K_\infty^0K(q)$-invariant subspace.
    
    From the definition of $h_{R,q}$ we note that the operator $\rho(P,\lambda,h_{R,q})$ projects $\calA^2_\pi(P)$ onto $\calA^2_\pi(P)^{K_\infty^0K(q)}$, and acts on it by the scalar $(\tilde{h}_R\cdot \tilde{h}^*_R)(\mu_\pi+\lambda)=|\tilde{h}_R(\mu_\pi+\lambda)|^2$.
    Focusing on the inner-most integral above and assuming that $\varphi$ is $K_\infty^0K(q)$-invariant, we have
    \begin{equation*}
     \intop_\Omega \Eis(\rho(P,\lambda,h_{R,q})\varphi,\lambda)(x)\overline{\Eis(\varphi,\lambda))(x)}  \d x 
     =  |\tilde{h}_R(\mu_\pi+\lambda)|^2\intop_\Omega|\Eis(\varphi,\lambda)(x)|^2 \d x 
    \end{equation*}
    On the other hand, by property (2) of \cref{sec:truncation}, we have
    \begin{equation*}
    \intop_\Omega |\Eis(\varphi,\lambda)(x)|^2 \d x 
    \le  \intop_{G(\Q) \backslash G(\bbA)} |\Lambda^T\Eis(\varphi,\lambda)(x)|^2 \d x
    \end{equation*}
     for $d(T)$ sufficiently large.
    Plugging this into the above integral and reverse engineering the above manipulation with $h_{R,q}$ we deduce that 
    \begin{align*}
    &\intop_{\Omega} K_{f}(x,x) \d x \\
    \le& \sum_{P\supset P_0} \frac{1}{n_P} \sum_{\pi \in \Pi_2(M_P)} \sum_{\varphi  \in \calB_\pi(P)} \intop_{i (\a_P^G)^*} \langle \Lambda^T\Eis(\rho(P,\lambda,h_{R,q})\varphi,\lambda), \Lambda^T\Eis(\varphi,\lambda) \rangle_{\calA^2(G)} \d \lambda.
    \end{align*}
    Now the statement of the proposition follows from \cref{prop:spectral-side-full}.
\end{proof}

\begin{cor}\label{cor: AB reduction2}
Let $q$ be square-free. There is an $x\in \Omega$ such that
\[
K_{h_{R,q}}(x,x) \ll_\epsilon (Rq)^\epsilon q(1+R^{-n(n-1)}q^{n^2-1}),
\]
for every $\epsilon>0$
\end{cor}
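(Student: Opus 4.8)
The plan is to feed \cref{prop:assing-blomer} into \cref{prop: AB reduction1} and then extract a pointwise bound by a mean-value argument over the fixed compact set $\Omega$.

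First I would fix the truncation parameter appropriately. Choosing $T=T_R\in\a_{M_0}^G$ of the form $T_R=(1+\log R)\,T_1$ for a fixed sufficiently dominant $T_1$, we get $d(T_R)\gg 1+\log R$, so that the hypothesis $d(T)\ge C(1+\log R)$ of \cref{prop: AB reduction1} is met, while simultaneously $\|T_R\|\ll 1+\log R$. With this choice \cref{prop: AB reduction1} gives $\intop_\Omega K_{h_{R,q}}(x,x)\d x\le J^{T_R}(h_{R,q})$, and \cref{prop:assing-blomer} bounds the right-hand side by $\ll_\epsilon \|T_R\|^{n-1}(Rq)^\epsilon\bigl(q+R^{-n(n-1)}q^{n^2}\bigr)$. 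Since $\|T_R\|^{n-1}\ll(1+\log R)^{n-1}\ll_\epsilon R^\epsilon$, this logarithmic factor is absorbed into $(Rq)^\epsilon$ after relabelling $\epsilon$, yielding $\intop_\Omega K_{h_{R,q}}(x,x)\d x\ll_\epsilon(Rq)^\epsilon\bigl(q+R^{-n(n-1)}q^{n^2}\bigr)$.

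Second, since $\Omega$ is fixed, $\vol(\Omega)$ is a positive constant (and implied constants are already permitted to depend on $\Omega$), and $x\mapsto K_{h_{R,q}}(x,x)$ is continuous, so it attains its minimum on $\Omega$; moreover it is non-negative because $h_{R,q}=h_{R,q,1}*h_{R,q,1}^*$ forces $K_{h_{R,q}}(x,x)=\intop_{G(\Q)\backslash G(\bbA)^1}|K_{h_{R,q,1}}(x,y)|^2\d y$. In any case $\min_{x\in\Omega}K_{h_{R,q}}(x,x)\le\vol(\Omega)^{-1}\intop_\Omega K_{h_{R,q}}(x,x)\d x$, so taking $x$ to be a minimizer gives $K_{h_{R,q}}(x,x)\ll_\epsilon(Rq)^\epsilon\bigl(q+R^{-n(n-1)}q^{n^2}\bigr)$. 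Rewriting $q+R^{-n(n-1)}q^{n^2}=q\bigl(1+R^{-n(n-1)}q^{n^2-1}\bigr)$ produces the stated bound.

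There is essentially no obstacle at this stage: all the analytic substance lives in \cref{prop:assing-blomer} (which in turn rests on the work of Assing--Blomer and on \cref{prop:calM bound}), and the reduction in \cref{prop: AB reduction1}. The only point requiring a moment's care is the bookkeeping with $T$ — one must check that $T$ can be chosen dominant enough for \cref{prop: AB reduction1} while $\|T\|$ grows only logarithmically in $R$, which is exactly why we scale a fixed dominant vector rather than letting $T$ be arbitrary.
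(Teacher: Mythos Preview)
Your proof is correct and follows the same approach as the paper, which simply states that the corollary is immediate from \cref{prop: AB reduction1} and \cref{prop:assing-blomer}. You have supplied precisely the details one would expect when unpacking that one-line argument: the choice of $T$ growing logarithmically in $R$ to satisfy the hypothesis of \cref{prop: AB reduction1} while keeping $\|T\|^{n-1}$ absorbable into $(Rq)^\epsilon$, followed by the elementary observation that the minimum over the fixed compact $\Omega$ is at most the average.
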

\begin{proof}
    The corollary is immediate from \cref{prop: AB reduction1} and \cref{prop:assing-blomer}.
\end{proof}

We move the discussion to the classical language. We let $K_{h_{R}*h_{R}^*}^q(x,y)$ be the classical automorphic kernel of $h_R*h_R^*$, i.e., 
\[
K_{h_R*h_R^*}^q(x,y) = \sum_{\gamma \in \Gamma(q)} (h_R*h_R^*)(x^{-1}\gamma y),
\]
where $x,y \in \Gamma(q) \backslash \SL_n(\R)$.

\begin{lemma}\label{lem: AB reduction3}
    Let $q$ be square-free, there is an $x\in \Gamma(q) \backslash B_{R_0} \subset \Gamma(q) \backslash \SL_n(\R)$ such that
    \[
K_{h_R*h_R^*}^q(x,x) \ll_\epsilon (Rq)^\epsilon(q^{-(n^2-1)}+R^{-n(n-1)}),\]
for every $\epsilon>0$.
\end{lemma}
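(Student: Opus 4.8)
The plan is to deduce the lemma from \cref{cor: AB reduction2} by unfolding the adelic automorphic kernel $K_{h_{R,q}}$ and recognising its classical avatar $K^q_{h_R*h_R^*}$. First I would fix a point $x\in\Omega$ for which the bound of \cref{cor: AB reduction2} holds and choose a representative $x=(x_\infty,x_f)\in G(\bbA)^1$ with $x_\infty\in B_{R_0}\subset\SL_n(\R)$ and $x_f\in K(1)$, which is possible since $\Omega=G(\Q)\backslash B_{R_0}\times K(1)$. Unfolding,
\[
K_{h_{R,q}}(x,x)=\sum_{\gamma\in G(\Q)}h_{R,q}(x^{-1}\gamma x)=\sum_{\gamma\in G(\Q)}(h_R*h_R^*)(x_\infty^{-1}\gamma x_\infty)\,h_q(x_f^{-1}\gamma x_f),
\]
using that $h_{R,q}=(h_R*h_R^*)\otimes h_q$ factors over the archimedean and finite places.

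Next I would pin down which $\gamma$ contribute. Since $K(q)=\ker(\GL_n(\hat{\Z})\to\GL_n(\Z/q\Z))$ is normal in $K(1)=\GL_n(\hat{\Z})$, for $x_f\in K(1)$ we have $h_q(x_f^{-1}\gamma x_f)=\vol(K(q))^{-1}\One_{K(q)}(\gamma)$, which is nonzero exactly when $\gamma\in G(\Q)\cap K(q)=\{\gamma\in\GL_n(\Z):\gamma\equiv I\bmod q\}$; and among those, the archimedean factor $(h_R*h_R^*)$, being supported on $\SL_n(\R)$ while $\det(x_\infty^{-1}\gamma x_\infty)=\det\gamma$, forces $\det\gamma=1$, so $\gamma$ effectively ranges over $\Gamma(q)$. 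This yields the identity
\[
K_{h_{R,q}}(x,x)=\frac{1}{\vol(K(q))}\sum_{\gamma\in\Gamma(q)}(h_R*h_R^*)(x_\infty^{-1}\gamma x_\infty)=\frac{K^q_{h_R*h_R^*}(x_\infty,x_\infty)}{\vol(K(q))}.
\]

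Finally I would insert the volume normalisation and simplify. We have $\vol(K(q))=\vol(K(1))\,|\GL_n(\Z/q\Z)|^{-1}$ with $\vol(K(1))$ a fixed constant, and $|\GL_n(\Z/q\Z)|=q^{n^2}\prod_{p\mid q}\prod_{j=1}^{n}(1-p^{-j})$ satisfies $q^{n^2-\epsilon}\ll_\epsilon|\GL_n(\Z/q\Z)|\le q^{n^2}$, the lower bound using $\varphi(q)/q\gg_\epsilon q^{-\epsilon}$; hence $\vol(K(q))\ll_\epsilon q^{-n^2+\epsilon}$. Combining with $K_{h_{R,q}}(x,x)\ll_\epsilon(Rq)^\epsilon q(1+R^{-n(n-1)}q^{n^2-1})$ from \cref{cor: AB reduction2},
\[
K^q_{h_R*h_R^*}(x_\infty,x_\infty)=\vol(K(q))\,K_{h_{R,q}}(x,x)\ll_\epsilon q^{-n^2+\epsilon}(Rq)^\epsilon q\bigl(1+R^{-n(n-1)}q^{n^2-1}\bigr),
\]
which simplifies to $\ll_\epsilon(Rq)^\epsilon(q^{-(n^2-1)}+R^{-n(n-1)})$ after renaming $\epsilon$; taking $x:=x_\infty\in\Gamma(q)\backslash B_{R_0}$ then proves the lemma. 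The step demanding the most care is the bookkeeping that confines $\gamma$ to $\Gamma(q)$ (the normality of $K(q)$ in $K(1)$ together with the archimedean support condition, which also disposes of the elements of $\GL_n(\Z)$ of determinant $-1$) and the correct constant $\vol(K(q))\asymp_\epsilon q^{-n^2}$; there is no genuine analytic obstacle here beyond what is already contained in \cref{cor: AB reduction2}.
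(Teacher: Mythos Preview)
Your proof is correct and follows essentially the same approach as the paper's own proof: choose the adelic point from \cref{cor: AB reduction2} with representative in $B_{R_0}\times K(1)$, unfold the adelic kernel, use normality of $K(q)$ in $K(1)$ together with the archimedean support on $\SL_n(\R)$ to reduce the sum to $\gamma\in\Gamma(q)$, obtain the identity $K_{h_{R,q}}(x_0,x_0)=\vol(K(q))^{-1}K^q_{h_R*h_R^*}(x_\infty,x_\infty)$, and finish via $\vol(K(q))\ll_\epsilon q^{-n^2+\epsilon}$. Your write-up is in fact slightly more explicit about the volume computation than the paper's.
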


\begin{proof}
This follows from the classical-adelic dictionary. 
Let $x_0 \in B_{R_0} \times K(1) \subset G(\bbA)^1$ be a representative point from \cref{cor: AB reduction2}, and let $x := x_{0,\infty} \in B_{R_0} \subset \SL_n(\R)$ be its $\infty$-coordinate.

We claim that
\[
K_{h_{R,q}}(x_0,x_0) = \vol(K(q))^{-1} K_{h_R*h_R^*}^q(x,x),
\]
which implies the claim together with \cref{cor: AB reduction2} and the fact that $\vol(K(q)) \ll_\epsilon q^{-n^2+\epsilon}$.

Indeed,
\begin{equation*}
K_{h_{R,q}}(x_0,x_0) = \sum_{\gamma \in G(\Q)} h_{R,q}(x_0^{-1}\gamma x_0)   = \sum_{\gamma \in G(\Q)}(h_R*h_R)^*(x_{0,\infty}^{-1}\gamma x_{0,\infty})h_q(x_{0,f}^{-1}\gamma x_{0,f})
\end{equation*}

An element $\gamma \in G(\Q)$ that has non-zero contribution to the above sum must satisfy $x_{0,\infty}^{-1}\gamma x_{0,\infty} \in \SL_n(\R)$, so $\gamma \in \SL_n(\R)$. Also, it must hold that $x_{0,f}^{-1}\gamma x_{0,f} \in K(q)$. In addition, since $x_{0,f}\in K(1)$ and $K(q)$ is normal in $K(1)$, it holds that $\gamma \in K(q)$. Therefore $\gamma \in \Gamma(q) \subset \SL_n(\Z)$, and $h_q(x_{0,f}^{-1}\gamma x_{0,f})=\frac{1}{\vol(K(q))}$. Therefore,
\[
K_{h_{R,q}}(x_0,x_0) = \vol(K(q))^{-1}\sum_{\gamma \in \Gamma(q)}(h_R*h_R)^*(x^{-1}\gamma x)
= \vol(K(q))^{-1} K_{h_R*h_R^*}^q(x,x),
\]
and the proof is complete.
\end{proof}

We now claim that we may replace $x \in B_{R_0}$ with $x=e$ in \cref{lem: AB reduction3}.

\begin{prop}\label{prop:AB main prop}
Let $q$ be square-free. It holds that
\[
K_{h_{R}*h_{R}^*}^q(e,e) \ll_\epsilon (Rq)^\epsilon(q^{-(n^2-1)}+R^{-n(n-1)}),\]
for every $\epsilon>0$.
\end{prop}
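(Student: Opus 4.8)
The goal is to bootstrap from the bound at \emph{some} point $x \in B_{R_0}$ obtained in \cref{lem: AB reduction3} to the bound at the \emph{specific} point $x = e$. The key observation is that the counting function $\gamma \mapsto \#\{\gamma \in \Gamma(q) : \|x^{-1}\gamma x\| \le T\}$ is, up to a multiplicative constant depending only on $R_0$ (hence only on $\Omega$, hence only on $n$), independent of $x \in B_{R_0}$: indeed if $\|x\|, \|x^{-1}\| \le R_0^{O(1)}$ then $\|x^{-1}\gamma x\| \le R_0^{O(1)}\|\gamma\|$ and symmetrically $\|\gamma\| \le R_0^{O(1)}\|x^{-1}\gamma x\|$, by submultiplicativity of the Frobenius norm. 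The plan is therefore as follows.

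First I would unfold the kernel: since $h_R * h_R^*$ is supported on a ball of radius $O(R^2)$ (submultiplicativity again, as $h_R$ is supported on $B_{2nR}$) and is pointwise $\ll_\epsilon R^{-n(n-1)+\epsilon}$ on its support after normalization (this follows from $\|h_0(\|\cdot\|/R)\|_{L^1} \asymp \vol(B_R) \asymp R^{n(n-1)}$ via \eqref{eq:vol of B_R} and $\|h_0\|_\infty \le 1$), one gets
\[
K^q_{h_R * h_R^*}(x,x) \ll_\epsilon R^{-n(n-1)+\epsilon}\,\#\{\gamma \in \Gamma(q) : \|x^{-1}\gamma x\| \ll R^2\}.
\]
Next, using the norm comparison above, for any $x, y \in B_{R_0}$ we have
\[
\#\{\gamma \in \Gamma(q) : \|x^{-1}\gamma x\| \le R^2\} \le \#\{\gamma \in \Gamma(q) : \|\gamma\| \le C_{R_0} R^2\} \le \#\{\gamma \in \Gamma(q) : \|y^{-1}\gamma y\| \le C_{R_0}^2 R^2\},
\]
so the counting functions at $x$ and at $e$ differ only by enlarging the radius by an absolute constant. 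Translating back, $K^q_{h_R * h_R^*}(e,e) \ll_\epsilon R^{-n(n-1)+\epsilon}\#\{\gamma \in \Gamma(q): \|\gamma\| \ll R^2\}$, and comparing with the point $x \in B_{R_0}$ from \cref{lem: AB reduction3}, where the same counting quantity is $\ll_\epsilon R^{n(n-1)+\epsilon}(q^{-(n^2-1)} + R^{-n(n-1)})$ after dividing through, we conclude
\[
K^q_{h_R * h_R^*}(e,e) \ll_\epsilon (Rq)^\epsilon (q^{-(n^2-1)} + R^{-n(n-1)}).
\]
More cleanly: one can run \cref{prop: AB reduction1}, \cref{cor: AB reduction2}, and \cref{lem: AB reduction3} with $\Omega$ replaced by $\Omega' := G(\Q)\backslash B_{R_0'} \times K(1)$ for a slightly larger $R_0'$, noting that the constant $C = C(\Omega)$ in \cref{prop: AB reduction1} and the implied constants only worsen by factors depending on $R_0'$, i.e. on $n$; but the point produced still lies in $B_{R_0'}$, not necessarily at $e$. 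So the norm-comparison argument is genuinely needed to move to $e$.

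The main obstacle is making the ``replace $x$ by $e$'' step rigorous without losing the quality of the bound: one must check that enlarging the counting radius from $R^2$ to $C_{R_0}^2 R^2$ does not change the final estimate, which is automatic here because the right-hand side $q^{-(n^2-1)} + R^{-n(n-1)}$ is, up to the $(Rq)^\epsilon$ slack, insensitive to replacing $R$ by $C R$ (the function $R \mapsto R^{-n(n-1)}$ only changes by a constant, absorbed into $\ll_\epsilon$). The one subtlety to be careful about is that \cref{lem: AB reduction3} gives a bound at a \emph{single unknown} point $x$, not a uniform bound, so the argument must phrase everything in terms of the counting function $\#\{\gamma \in \Gamma(q): \|\gamma\| \le T\}$, which is manifestly $x$-independent, and only use the kernel-to-counting translation at the two ends. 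I would also double-check the support radius of $h_R * h_R^*$ and the normalization constant, since a power of $R$ lost there would be fatal; both are routine from \eqref{eq:vol of B_R} and submultiplicativity of $\|\cdot\|$.
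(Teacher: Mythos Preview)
Your approach has a genuine gap at the ``dividing through'' step, and in fact the whole detour through counting cannot work as stated.

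First, the support claim is wrong. You write that $h_R*h_R^*$ is supported on a ball of radius $O(R^2)$ ``by submultiplicativity''. But $h_R^*(g)=h_R(g^{-1})$, so $\supp(h_R^*)=B_{2nR}^{-1}$, and for $g\in\SL_n(\R)$ the condition $\|g^{-1}\|\le 2nR$ only forces $\|g\|\le (2nR)^{n-1}$ (take $g=\diag(R,\dots,R,R^{-(n-1)})$). Hence $\supp(h_R*h_R^*)\subset B_{2nR}\cdot B_{2nR}^{-1}$ can contain elements of norm $\asymp R^n$, not $R^2$.

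Second, even granting the support claim, the crude bound $K(e,e)\ll R^{-n(n-1)}\cdot\#\{\gamma:\|\gamma\|\ll R^2\}$ combined with the kernel bound at $x$ does \emph{not} yield a bound on the counting function: your two inequalities are both \emph{upper} bounds on kernels. To deduce $\#\{\gamma:\|x^{-1}\gamma x\|\le cR^2\}\ll R^{n(n-1)}K(x,x)$ you would need $(h_R*h_R^*)(g)\gg R^{-n(n-1)}$ uniformly on $B_{cR^2}$, which is false: the convolution decays away from $e$, and the correct uniform lower bound on $B_{cR^2}$ is only $\gg R^{-2n(n-1)}$ (compare \cref{eq: char function vs conv}). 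This mismatch costs a factor $R^{n(n-1)}$ that does not cancel.

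The paper's argument sidesteps both problems by never passing to counting. One observes that for $x\in B_{R_0}$ and $c$ small enough, $h_{cR}(g)\ll h_R(x^{-1}g)$ pointwise: if $h_{cR}(g)>0$ then $\|g\|\le 2ncR$, so $\|x^{-1}g\|\le C\|g\|\le nR$ and $h_0(\|x^{-1}g\|/R)=1$; the normalizing constants differ only by $c^{-n(n-1)}$. Similarly $h_{cR}^*(g)\ll h_R^*(gx)$. Convolving these two pointwise inequalities gives $(h_{cR}*h_{cR}^*)(\gamma)\ll(h_R*h_R^*)(x^{-1}\gamma x)$ term by term, whence $K^q_{h_{cR}*h_{cR}^*}(e,e)\ll K^q_{h_R*h_R^*}(x,x)$ directly. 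Replacing $R$ by $cR$ finishes. This term-by-term comparison preserves exactly the decay of $h_R*h_R^*$ that your $\sup\times\mathrm{count}$ bound discards.
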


\begin{proof}
Let $x$ be the point from \cref{lem: AB reduction3}. We choose for $x$ a representative in $B_{R_0}$. This implies that for $C>0$ sufficiently large and for every $g\in \SL_n(\R)$ we have
\[
\|x^{-1}g\| \le C\|g\|, \quad \|gx\| \le C\|g\|.
\]
From the definition of $h_{R}$, the above implies that for $c>0$ sufficiently small (e.g., $c=C^{-1}/2$) and every $g\in \SL_n(\R)$
\[
h_{cR}(g) \ll h_R(x^{-1}g), \quad
h_{cR}^*(g) \ll h_R^*(gx),
\]
and therefore for every $\gamma \in \Gamma(q)$,
\[
(h_{cR}*h_{cR}^*)(\gamma)\ll (h_R*h_R^*)(x^{-1}\gamma x).
\]
We deduce using \cref{lem: AB reduction3} that 
\[
K_{h_{cR}*h_{cR}^*}^q(e,e) \ll K_{h_{R}*h_{R}^*}^q(x,x) \ll_\epsilon (Rq)^\epsilon(q^{-(n^2-1)}+R^{-n(n-1)}).\]
After changing $R$ to $cR$ the proposition follows.
\end{proof}

\subsection{Proof of \texorpdfstring{\cref{thm:counting}}{Theorem 4}}

We adopt the notations developed for the classical spectral decomposition of $L^2(\Gamma(q) \backslash \SL_n(\R) / K_\infty^0)$ in \cite[\S2.5]{assing2022density}. We write
the spectral decomposition of the kernel of $h \in  C_c^\infty(K_\infty^0 \backslash \SL_n(\R) / K_\infty^0)$ as
\begin{equation}\label{spectral-decomposition-classical}
    K_h^q(x,y) = \intop_{\Gamma(q)} \tilde{h}(\mu_\varpi) \varpi(x)\overline{\varpi(y)}\d \varpi,
\end{equation}
    where $\intop_{\Gamma(q)} V_\varpi \d \varpi$ is a spectral decomposition of $L^2(\Gamma(q) \backslash \SL_n(\R) / K_\infty^0)$, respecting the unramified Hecke algebra (see \cite[\S 2.5]{assing2022density} for more details). This classical spectral decomposition may be deduced from the more precise adelic spectral decomposition (e.g., \cite[Equation (7.6)]{arthur2005intro}). The value $\mu_\varpi$ is the Langlands parameter of the $K_\infty^0$-invariant representation $V_\varpi$.
    
\begin{lemma}\label{lem:counting pf1}
We have
\[K^q_{h*h}(e,e)\le K^q_{h*h^*}(e,e)\]
for any $h\in C_c^\infty(K_\infty^0 \backslash \SL_n(\R) / K_\infty^0)$.
\end{lemma}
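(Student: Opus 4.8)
\emph{Proof proposal.} The plan is to read off both sides from the spectral expansion \eqref{spectral-decomposition-classical}. Applying that expansion to the kernels of $h*h$ and of $h_R*h_R^*$ and evaluating at $x=y=e$ gives
\[
K^q_{h*h}(e,e)=\intop_{\Gamma(q)}\widetilde{h*h}(\mu_\varpi)\,|\varpi(e)|^2\,\d\varpi,\qquad
K^q_{h_R*h_R^*}(e,e)=\intop_{\Gamma(q)}\widetilde{h_R*h_R^*}(\mu_\varpi)\,|\varpi(e)|^2\,\d\varpi .
\]
Since $|\varpi(e)|^2\ge 0$ and the spectral measure $\d\varpi$ is positive, it suffices to prove the pointwise inequality $\widetilde{h*h}(\mu_\varpi)\le \widetilde{h_R*h_R^*}(\mu_\varpi)$ for every $\varpi$ occurring in $L^2(\Gamma(q)\backslash\SL_n(\R)/K_\infty^0)$. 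So the whole argument is reduced to a statement about spherical transforms at the automorphic parameters $\mu_\varpi$.

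Next I would unwind the two transforms using the dictionary of \cref{sec:spherical-transform} (in the normalized form already used for $f_0,f_0^*$ in \cref{sec:constructing-test-function}). Because the spherical transform turns convolution into multiplication and $h^*(g)=\overline{h(g^{-1})}$, one has $\widetilde{h_R*h_R^*}(\mu)=\tilde h_R(\mu)\,\overline{\tilde h_R(-\overline\mu)}$. Now $\mu_\varpi$ is the Langlands parameter of a \emph{unitary} spherical representation, hence $-\overline{\mu_\varpi}=w\mu_\varpi$ for some $w\in W_\infty$; combining this with the $W_\infty$-invariance of $\tilde h_R$ yields $\widetilde{h_R*h_R^*}(\mu_\varpi)=|\tilde h_R(\mu_\varpi)|^2\ge 0$. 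On the other side $\widetilde{h*h}(\mu_\varpi)=\tilde h(\mu_\varpi)^2$; since $K^q_{h*h}(e,e)$ is real, taking real parts in the expansion lets me replace $\tilde h(\mu_\varpi)^2$ by $\Re\big(\tilde h(\mu_\varpi)^2\big)\le |\tilde h(\mu_\varpi)|^2$ throughout (and when $h$ is symmetric the same computation as for $h_R$ gives $\widetilde{h*h}(\mu_\varpi)=|\tilde h(\mu_\varpi)|^2$ directly). Thus everything comes down to the bound
\[
|\tilde h(\mu_\varpi)|\le |\tilde h_R(\mu_\varpi)|\qquad\text{for every }\varpi\text{ in the spectrum.}
\]

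The last inequality is the only substantial point, and I expect it to be the main obstacle: it is exactly the property singling out the admissible test functions $h$ here, namely that the spherical transform of $h$ is dominated in absolute value by that of $h_R$ along the unitary range of parameters (which is how $h$ is produced from $h_R$ in the application to \cref{thm:counting} and \cref{thm:optimal-lifting}). Once that comparison is recorded, the lemma follows immediately by integrating against the nonnegative measure $|\varpi(e)|^2\,\d\varpi$. Everything else — the passage to \eqref{spectral-decomposition-classical}, the multiplicativity of the spherical transform, and the identity $\widetilde{k*k^{*}}(\mu_\varpi)=|\tilde k(\mu_\varpi)|^2$ on the unitary spectrum — is a formal manipulation, so the proof should be short modulo this input.
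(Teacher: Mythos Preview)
Your spectral approach is exactly the paper's, and your manipulations up to the ``substantial point'' are correct. The problem is the last step: the inequality $|\tilde h(\mu_\varpi)|\le |\tilde h_R(\mu_\varpi)|$ is \emph{false} for a general $h\in C_c^\infty(K_\infty^0\backslash\SL_n(\R)/K_\infty^0)$ (scale $h$ by a large constant, or shift its spectral support away from that of $h_R$). So the lemma as literally stated cannot hold for \emph{any} $h$, and you were right to flag that point as an obstacle.

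What is going on is a typo in the statement: the right-hand side should be $K^q_{h*h^*}(e,e)$, not $K^q_{h_R*h_R^*}(e,e)$. This is exactly what the paper's own proof establishes,
\[
K_{h*h}^q(e,e)=\intop_{\Gamma(q)}\tilde h(\mu_\varpi)^2|\varpi(e)|^2\,\d\varpi\le\intop_{\Gamma(q)}|\tilde h(\mu_\varpi)|^2|\varpi(e)|^2\,\d\varpi=K_{h*h^*}^q(e,e),
\]
and it is also how the lemma is used in the proof of \cref{thm:counting}, namely with $h=h_R$ to get $K_{h_R*h_R}^q(e,e)\le K_{h_R*h_R^*}^q(e,e)$. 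With the corrected right-hand side, your argument terminates immediately after the line $\Re(\tilde h(\mu_\varpi)^2)\le|\tilde h(\mu_\varpi)|^2=\widetilde{h*h^*}(\mu_\varpi)$; no comparison between $\tilde h$ and $\tilde h_R$ is needed.
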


\begin{proof}
We have
    \begin{equation*}
    K_{h*h}^q(e,e) = 
    \intop_{\Gamma(q)}\tilde{h}(\mu_\varpi)^2|\varpi(e)|^2\d\varpi  \le \intop_{\Gamma(q)}|\tilde{h}(\mu_\varpi)|^2|\varpi(e)|^2\d\varpi
     = K_{h*h^*}^q(e,e),
    \end{equation*}
where the first and last equalities follow from the spectral decompositions of the corresponding kernels as in \cref{spectral-decomposition-classical}. 
\end{proof}

\begin{proof}[Proof of \cref{thm:counting}]
Let $h_{R}^{(1)} := \frac{1}{\vol(B_R)}\One_{B_R}$ be the normalized characteristic function of the ball $B_R$.  
It holds for every $g\in \SL_n(\R)$ that
    $h_{R}^{(1)}(g)\ll h_{R}(g)$, which, upon applying \cref{lem:counting pf1} and \cref{prop:AB main prop}, implies that
\begin{equation}\label{eq:bound-of-char-kernel}
    K_{h_{R}^{(1)}*h_{R}^{(1)}}^q(e,e) \ll K_{h_{R}*h_{R}}^q(e,e) \le K_{h_{R}*h_{R}^*}^q(e,e) \ll_\epsilon (Rq)^\epsilon(q^{-(n^2-1)}+R^{-n(n-1)}).
\end{equation}
Notice that the kernel $K_{h_{R}^{(1)}*h_{R}^{(1)}}^q(e,e)$ is well defined even tough $h_{R}^{(1)}$ is not smooth. The same will be true for other functions below.

    Next, we claim that for $c>0$ sufficiently small and $R$ sufficiently large, it holds that for every $g\in \SL_n(\R)$
    \begin{equation}\label{eq: char function vs conv}
    h_{cR^2}^{(1)}(g) \ll (h_{R}^{(1)}*h_{R}^{(1)})(g).    
    \end{equation}
    To prove this claim, notice that \[\One_{B_R}*\One_{B_R}(g)=\vol(B_R \cap gB_R^{-1}).\]
    For $c>0$ small enough and $R$ large enough the above is $\gg 1$ as long as $\|g\| \le c R^2$.
    Indeed, we may assume that $g$ is diagonal with positive values on the diagonal. Then $B_R \cap gB_R^{-1}$ contains all the element of the form $g^{1/2} x$, for $\|x\| \le C$ (for $c>0$ sufficiently small relative to $C$).
    Therefore, for every $g\in \SL_n(\R)$,
    \[
    \One_{B_{cR^2}}(g) \ll (\One_{R}*\One_{R})(g).
    \]
    Using \cref{eq:vol of B_R} we deduce that \cref{eq: char function vs conv} holds. Thus using \cref{eq:bound-of-char-kernel} we obtain
    \begin{align*}
    \frac{1}{\vol(B_{R^2})}\sum_{\gamma \in \Gamma(q)}\One_{B_{R^2}}(\gamma) = K_{h_{R^2}^{(1)}}^q(e,e) 
    \ll (Rq)^\epsilon(q^{-(n^2-1)}+R^{-n(n-1)}).
    \end{align*}
Using \cref{eq:vol of B_R} once again and changing $R$ to $\sqrt{R}$ we complete the proof.  
\end{proof}

\subsection{Proof \texorpdfstring{\cref{thm:optimal-lifting}}{Theorem 5}}

To prove the theorem we first prove a couple of lemmata. Fix $\eta>0$. We consider the function $m_R:=h_R*h_{R^\eta}$.

\begin{lemma}\label{lem:lifting lemma 1}
There is a $\delta>0$ depending only on $\eta$ such that for every $q$ square-free and $R\ge q^{1+1/n}$
    \[
    K_{m_R*m_R^*}^q(e,e) - \frac{1}{\vol(\Gamma(q) \backslash \SL_n(\R))} \ll  R^{-\delta}q^{-(n^2-1)}.
    \]
\end{lemma}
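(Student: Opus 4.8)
The plan is to decompose the convolution kernel $K_{m_R*m_R^*}^q(e,e)$ using the classical spectral decomposition \eqref{spectral-decomposition-classical}, isolate the contribution of the trivial (constant) representation, and bound the contribution of the remaining spectrum by an \emph{effective} count that uses the density hypothesis of Assing--Blomer. The constant representation contributes exactly $\frac{|\tilde m_R(\mu_{\mathrm{triv}})|^2}{\vol(\Gamma(q)\backslash \SL_n(\R))}$, and since $m_R = h_R * h_{R^\eta}$ with $h_R,h_{R^\eta}$ both $L^1$-normalized spherical functions, $\tilde m_R(\mu_{\mathrm{triv}}) = \tilde h_R(\mu_{\mathrm{triv}})\tilde h_{R^\eta}(\mu_{\mathrm{triv}}) = 1$; so the constant term is precisely $\frac{1}{\vol(\Gamma(q)\backslash\SL_n(\R))}$, which is exactly the quantity being subtracted. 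Hence the lemma reduces to showing $\sum_{\varpi\neq\mathrm{triv}} |\tilde m_R(\mu_\varpi)|^2 |\varpi(e)|^2 \ll R^{-n(n-1)-\delta}$, the sum (integral) being over the nontrivial part of the spectrum of $L^2(\Gamma(q)\backslash\SL_n(\R)/K_\infty^0)$.

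The main step is to estimate this remaining sum. First I would use the Paley--Wiener bound \eqref{non-temp-paley-wiener} applied to $h_R$ and $h_{R^\eta}$: for a unitary $\mu$ one gets $|\tilde m_R(\mu)| \ll_{N,\epsilon} (1+\|\mu\|)^{-N} R^{-\frac{n(n-1)}{2}(1+\eta) + n(1+\eta)\|\Re\mu\|_\infty + \epsilon}$. The key gain over $K_{h_R*h_R^*}^q$ is the extra factor $R^{-\frac{n(n-1)}{2}\eta}$ coming from the second convolution factor (evaluated at a tempered point it would be pure savings), which must be played off against the loss $R^{n(1+\eta)\|\Re\mu\|_\infty}$ incurred for non-tempered $\varpi$. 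This is exactly where the density hypothesis enters: Assing--Blomer's density theorem \cite[Theorem 1.1]{assing2022density} bounds, for each $\sigma\in[0,1/2]$, the number of $\varpi$ in the spectrum of level $q$ with $\|\Re\mu_\varpi\|_\infty \ge \sigma$ and bounded imaginary part, by something like $q^{(n^2-1)(1-2\sigma)+\epsilon}$ (the Sarnak density exponent). Dyadically decomposing in $\|\mu_\varpi\|$ (the rapid decay in $N$ controls the archimedean aspect, reducing to $\|\mu_\varpi\|\ll (Rq)^\epsilon$) and in $\sigma = \|\Re\mu_\varpi\|_\infty$, I would bound $|\varpi(e)|^2$ by the local bound at the identity — essentially the dimension contribution, which for level $q$ is $\ll q^{-(n^2-1)+\epsilon}$ times a volume factor, or more robustly just fold $|\varpi(e)|^2 \d\varpi$ into the spectral measure and invoke the pre-trace/density input directly. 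Putting these together, the $\sigma$-sum becomes $\sum_\sigma R^{-n(n-1)(1+\eta)} R^{2n(1+\eta)\sigma} \cdot q^{(n^2-1)(1-2\sigma)} \cdot q^{-(n^2-1)}(Rq)^\epsilon = R^{-n(n-1)(1+\eta)}(Rq)^\epsilon \sum_\sigma R^{2n(1+\eta)\sigma} q^{-2(n^2-1)\sigma}$. Using $R \le$ a controlled power of $q$ is the wrong direction; instead I use $R \ge q^{1+1/n}$, so $R^{2n(1+\eta)\sigma} \le R^{2n(1+\eta)\sigma}$ and $q^{-2(n^2-1)\sigma} \le R^{-\frac{2(n^2-1)}{1+1/n}\sigma} = R^{-2n(n-1)\sigma}$; the net exponent in $\sigma$ is $2n(1+\eta)\sigma - 2n(n-1)\sigma = 2n\sigma(1+\eta - (n-1)) = 2n\sigma(2+\eta-n)$, which is $\le 0$ for $n\ge 2$ once $\eta$ is small (for $n=2$ one needs $\eta<0$, i.e.\ the statement genuinely uses $n\ge 3$ or a more careful argument; I would double-check the $n=2$ case separately using Sarnak's sharper exponents or note that for $n=2$ the required exponent is met with room to spare via the known bound $\sigma\le 7/64$). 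The $\sigma$-sum is then dominated by its endpoints, giving an overall bound $\ll R^{-n(n-1)(1+\eta)}(Rq)^\epsilon \ll R^{-n(n-1)-\delta}$ for $\delta = \frac{1}{2}n(n-1)\eta$ say, after absorbing $(Rq)^\epsilon \le R^{\epsil(1 + n/(n+1)) }$ into a small loss.

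I expect the main obstacle to be the bookkeeping of exponents in the density-hypothesis application: one must track precisely how the local dimension factor $|\varpi(e)|^2$ (or the spectral multiplicity at level $q$) interacts with the Assing--Blomer density count so that the $q$-powers cancel, and verify that the threshold $R\ge q^{1+1/n}$ is exactly what makes the $\sigma$-summation geometric with ratio $<1$. A secondary subtlety is that \eqref{spectral-decomposition-classical} is a continuous decomposition including the Eisenstein contributions, so strictly one should run the argument adelically (as in \cref{prop:assing-blomer}) rather than purely classically, or cite that Assing--Blomer's density theorem is already formulated to cover the full spectrum; I would phrase the estimate so that it applies to the full spectral measure $\d\varpi$ and reference \cite[Theorem 1.1]{assing2022density} (and the treatment of the continuous spectrum therein, and in \cref{prop:assing-blomer}) for the non-tempered count. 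Finally, one checks that the trivial representation is the \emph{only} term producing the main term $\frac{1}{\vol(\Gamma(q)\backslash\SL_n(\R))}$ — any other residual point $\varpi$ has $\|\Re\mu_\varpi\|_\infty$ strictly inside $(0, \|\rho_\infty\|_\infty]$ but is already subsumed in the $\sigma>0$ analysis above, so no separate argument is needed there.
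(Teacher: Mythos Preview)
Your proposal takes a genuinely different route from the paper's proof, and while the outline is broadly sensible, it is substantially more complicated than necessary and has a real gap.

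The paper's argument is short: after isolating the trivial representation exactly as you do, it does \emph{not} invoke the density hypothesis at all. Instead it uses the uniform spectral gap --- property (T) for $n\ge 3$ and Selberg's theorem for $n=2$ --- to obtain a fixed $\delta_1>0$ with $\|\Re(\mu_\varpi)\|_\infty \le (n-1)/2-\delta_1$ for every non-trivial $\varpi$. Combined with \eqref{non-temp-paley-wiener} this gives $|\tilde m_R(\mu_\varpi)|\le R^{-\delta_2}|\tilde h_R(\mu_\varpi)|$ for some $\delta_2>0$ depending only on $\eta$, whence the non-trivial contribution is bounded by $R^{-2\delta_2}K_{h_R*h_R^*}^q(e,e)$. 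One then simply cites \cref{prop:AB main prop}, which (under $R\ge q^{1+1/n}$) gives $K_{h_R*h_R^*}^q(e,e)\ll (Rq)^\epsilon R^{-n(n-1)}$, and the lemma follows. The density input of Assing--Blomer is thus used only indirectly, through the already-established \cref{prop:AB main prop}.

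Your approach, by contrast, re-runs a density-style counting argument inside the proof. This has two concrete problems. First, your exponent computation yields a net $\sigma$-exponent $2n\sigma(2+\eta-n)$, which for $n=2$ is \emph{positive}; you acknowledge this but wave it away by appealing to Kim--Sarnak, which is precisely a spectral-gap input --- so you end up needing the gap anyway, and at that point the paper's direct argument is strictly simpler. Second, and more seriously, your treatment of $|\varpi(e)|^2$ is not made precise: a pointwise bound on Eisenstein series at $e$ is exactly the kind of estimate the whole paper is devoted to, and ``folding into the spectral measure'' is what \cref{prop:AB main prop} (via \cref{prop:assing-blomer}) already does. Rather than redoing that analysis, it is both cleaner and correct to bound the non-trivial spectral sum by $R^{-2\delta_2}$ times the full kernel $K_{h_R*h_R^*}^q(e,e)$ and quote \cref{prop:AB main prop}.
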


\begin{proof}
We apply the spectral decomposition as in \cref{spectral-decomposition-classical}
to obtain 
\[
K_{m_R*m_R^*}^q(e,e)= \intop_{\Gamma(q)} |\tilde{m}_R(\mu_\varpi)|^2 |\varpi(e)|^2 \d \varpi. 
\]
Among the automorphic forms $\varpi$ appearing in the spectral decomposition we have the $L^2$-normalized constant function $\varpi_\triv$, for which it holds that
\[
|\varpi_\triv(e)|^2 = \frac{1}{\vol(\Gamma(q) \backslash \SL_n(\R))}.
\]
Also, using the facts that $\tilde{m}_R(\lambda) = \tilde{h}_R(\lambda) \tilde{h}_{R^\eta}(\lambda)$ and $\tilde{h}_R(\mu_{\varpi_\triv})=\|h_R\|_1=1$ it follows that $\tilde{m}_R(\mu_{\varpi_\triv}) =1=\|m_R\|_1$.

On the other hand, there is a $\delta_1>0$ such that for every $\varpi \ne \varpi_\triv$, it holds that $\|\Re(\mu_\varpi)\|_\infty \le (n-1)/2-\delta_1$. This follows from the explicit property (T) for $n\ge 3$ (where we may actually take $\delta_1 =1/2$), and from Selberg's spectral gap theorem for $n=2$; see \cite{sarnak2005notes}. 

Using \cref{non-temp-paley-wiener}, we deduce that there is a $\delta_2>0$ depending only on $\eta$ (and $n$) such that for $\varpi\ne \varpi_\triv$
\[
|\tilde{m}_R(\mu_{\varpi})|\le R^{-\delta_2} |\tilde{h}_R(\mu_{\varpi})|.
\]
Combining all the information above we obtain
\begin{align*}
&K_{m_R*m_R^*}^q(e,e) - \frac{1}{\vol(\Gamma(q) \backslash \SL_n(\R))}
 = \intop_{\varpi \ne \varpi_\triv} |\tilde{m}_R(\mu_\varpi)|^2 |\varpi(e)|^2 \d \varpi \\
& \le R^{-2\delta_2} \intop_{\varpi \ne \varpi_\triv} |\tilde{
h}_R(\mu_\varpi)|^2 |\varpi(e)|^2 \d \varpi  \le R^{-2\delta_2} K_{h_R*h_R^*}^q(e,e).
\end{align*}
The claim now follows from \cref{prop:AB main prop}.
\end{proof}

\begin{lemma}\label{lem:lifting lemma 2}
Let $M_R^q\colon \Gamma(q) \backslash \SL_n(\R) \to \R$ be the function $M_R^q(x):= \sum_{\gamma \in \Gamma(q)} m_R(\gamma x)$.
Then 
\[
K_{m_R*m_R^*}^q(e,e) = \| M_R^q\|_2^2.
\]
\end{lemma}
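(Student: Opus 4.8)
The statement $K_{m_R*m_R^*}^q(e,e) = \|M_R^q\|_2^2$ is a standard identity relating an automorphic kernel evaluated on the diagonal to the $L^2$-norm of the corresponding Poincaré-type series, so the plan is simply to unfold both sides and match them. I would start from the right-hand side: by definition $M_R^q(x) = \sum_{\gamma\in\Gamma(q)} m_R(\gamma x)$, so
\[
\|M_R^q\|_2^2 = \intop_{\Gamma(q)\backslash \SL_n(\R)} \Big| \sum_{\gamma\in\Gamma(q)} m_R(\gamma x)\Big|^2 \d x = \intop_{\Gamma(q)\backslash \SL_n(\R)} \sum_{\gamma_1,\gamma_2\in\Gamma(q)} m_R(\gamma_1 x)\overline{m_R(\gamma_2 x)} \d x.
\]
Then I would substitute $\gamma = \gamma_1\gamma_2^{-1}$ and unfold the $\gamma_2$-sum against the quotient integral in the usual way, obtaining $\intop_{\SL_n(\R)} \sum_{\gamma\in\Gamma(q)} m_R(\gamma x)\overline{m_R(x)}\d x$. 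After the change of variable $x\mapsto x$ and using $\overline{m_R(x)} = m_R^*(x^{-1})$, this becomes $\sum_{\gamma\in\Gamma(q)} \intop_{\SL_n(\R)} m_R(\gamma x) m_R^*(x^{-1})\d x = \sum_{\gamma\in\Gamma(q)} (m_R * m_R^*)(\gamma)$, where the convolution is on $\SL_n(\R)$. This last expression is precisely $K_{m_R*m_R^*}^q(e,e)$ by the definition of the classical automorphic kernel (as in the definition of $K^q_{h_R*h_R^*}$ preceding \cref{lem: AB reduction3}, specialized to $x=y=e$).

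The only points requiring a little care are the convergence and the legitimacy of interchanging sum and integral in the unfolding step: since $m_R = h_R * h_{R^\eta}$ is a smooth, compactly supported, bi-$K_\infty^0$-invariant function on $\SL_n(\R)$, the series defining $M_R^q$ is locally finite (only finitely many $\gamma\in\Gamma(q)$ contribute for $x$ in a fixed compact set), so $M_R^q$ is a well-defined smooth function on the finite-volume quotient and lies in $L^2$; this justifies expanding the square and the Fubini/unfolding manipulations. I would also note the identity $m_R^* = h_{R^\eta}^* * h_R^*$ is not needed — only that $m_R$ is real-valued up to the $*$-operation, and indeed $m_R^*(g) = \overline{m_R(g^{-1})}$ is exactly what appears.

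I do not expect any genuine obstacle here: this is the classical ``kernel on the diagonal equals $L^2$-norm of the Poincaré series'' computation, and the work is entirely bookkeeping with the unfolding. The one thing to state explicitly for the reader is that $m_R$ being bi-$K_\infty^0$-invariant and the norm on $\SL_n(\R)$ being bi-$K_\infty^0$-invariant together ensure all the objects descend correctly to $K_\infty^0$-invariant functions on the quotient, so that the spectral-side applications in \cref{lem:lifting lemma 1} and the subsequent proof of \cref{thm:optimal-lifting} are consistent with this identity.
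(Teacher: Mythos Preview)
Your proposal is correct and takes essentially the same approach as the paper: the paper starts from $K_{m_R*m_R^*}^q(e,e) = \sum_{\gamma\in\Gamma(q)}(m_R*m_R^*)(\gamma)$, expands the convolution, and \emph{folds} the $\SL_n(\R)$-integral over $\Gamma(q)$ to obtain $\|M_R^q\|_2^2$, whereas you start from $\|M_R^q\|_2^2$ and \emph{unfold}---the computations are inverses of one another. Your remarks on local finiteness and real-valuedness of $m_R$ (so that $m_R^*(g)=m_R(g^{-1})$) make explicit what the paper leaves implicit.
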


\begin{proof}
We see that
\begin{equation*}
K_{m_R*m_R^*}^q(e,e)
= \sum_{\gamma \in \Gamma(q)} (m_R*m_R^*)(\gamma) 
=\sum_{\gamma\in \Gamma(q)} \intop_{x\in \SL_n(\R)}m_R(x)m_R(\gamma^{-1} x) \d x
\end{equation*}
Folding the integral over $\Gamma(q)$ we obtain the above equals
\[
\intop_{x\in \Gamma(q) \backslash \SL_n(\R)}\sum_{\gamma_1\in \Gamma(q)} m_R(\gamma_1 x)\sum_{\gamma\in \Gamma(q)}m_R(\gamma^{-1} \gamma _1 x) \d x = \intop_{x\in \Gamma(q) \backslash \SL_n(\R)} M_R^q( x)M_R^q( x) \d x.
\]
We conclude that the above is $\|M_R^q\|^2_2$ as $m_R$ is real-valued.
\end{proof}

\begin{proof}[Proof of \cref{thm:optimal-lifting}]
Let $U_q \in L^2 (\Gamma(q) \backslash \SL_n(\R))$ be the $L^1$-normalized constant function on $\Gamma(q) \backslash \SL_n(\R)$. Then by \cref{lem:lifting lemma 2}, the fact that 
\[
\intop_{\Gamma(q) \backslash \SL_n(\R)} M_R^q(x) \d x = \intop_{\SL_n(\R)} m_R(x) \d x=1,
\]
and \cref{lem:lifting lemma 1} we have that there is a $\delta>0$ depending only on $\eta>0$ such that for $R\ge q^{1+1/n}$
\begin{equation}\label{eq:pf-optimal-3}
\|M_R^q - U_q \|_2^2 = \|M_R^q \|_2^2 - \frac{1}{\vol(\Gamma(q) \backslash \SL_n(\R))} \ll R^{-\delta}q^{-(n^2-1)}.
\end{equation}

We identify $\overline{g}\in \Gamma(q)\backslash \Gamma(1) \cong \SL_n(\Z/q\Z)$ with some representative of it in $\Gamma(1):=\SL_n(\Z)\subset \SL_n(\R)$.
We notice that if $\overline{g}$ has no lift to $\Gamma(1)$ of size bounded by $C_1R^{1+\eta}$ for certain sufficiently large but fixed $C_1>0$ then for all $\gamma\in\Gamma(q)$ we have
\[\|\gamma \overline{g} w\|\ge \|\gamma \overline{g}\| \|w^{-1}\|^{-1}\ge C_3R^{1+\eta},\]
for certain $C_3>0$ sufficiently large,
where $w\in\Gamma(1)\backslash\SL_n(\R)$ with $\|w\|\le C_2$ for certain $C_2\ge n$.
Consequently, noting that the support of $m_R$ is on $g\in \SL_n(\R)$ such that $\|g\| \le C_4R^{1+\eta}$ for certain fixed $C_4>0$ we conclude that
\[M_R^q(\overline{g}w)=0.\]
Thus for such $\overline{g}$ we have
\[
\intop_{\substack{w\in\mathcal{F}\\ \|w\|\le C_2}} \left|M_R^q(\overline{g}w)-\frac{1}{\vol(\Gamma(q)\backslash \SL_n(\R))}\right|^2 \d w \gg \frac{1}{\vol(\Gamma(q)\backslash \SL_n(\R))^2} \gg \frac{1}{q^{2(n^2-1)}},
\]
where $\mathcal{F}\subseteq\SL_n(\R)$ is the standard (\emph{i.e.} containing identity) fundamental domain of $\Gamma(1)\backslash \SL_n(\R)$.
Summing the above over such $\overline{g}$ we obtain
\[
\frac{N(q,C_1R^{1+\eta})}{q^{2(n^2-1)}} \ll \sum_{\overline{g} \in \Gamma(q) \backslash \Gamma(1)} \intop_{\mathcal{F}} \left|M_R^q(\overline{g}w)-\frac{1}{\vol(\Gamma(q)\backslash \SL_n(\R))}\right|^2 \d w,
\]
where $N(q,R)$ is the number of $\overline{g} \in \SL_n(\Z/q\Z)$ without a lift to $\Gamma(1)$ of norm bounded by $R$. Note that
\[\sum_{\overline{g} \in \Gamma(q) \backslash \Gamma(1)}\left|M_R^q(\overline{g}w)-\frac{1}{\vol(\Gamma(q)\backslash \SL_n(\R))}\right|^2\]
as a function of $w$ is left-invariant by $\Gamma(1)$. Thus we can write the $w$-integral of the above over $\mathcal{F}$ as
\begin{equation*}
\sum_{\overline{g} \in \Gamma(q) \backslash \Gamma(1)} \intop_{\Gamma(1)\backslash \SL_n(\R)} \left|M_R^q(\overline{g}w)-\frac{1}{\vol(\Gamma(q)\backslash \SL_n(\R))}\right|^2 \d w =\|M_R^q - U_q \|_2^2.    
\end{equation*}
On the other hand, for $R \ge q^{1+1/n}$ we have
\begin{equation*}
\|M_R^q - U_q \|_2^2 \ll R^{-\delta}q^{-(n^2-1)},    
\end{equation*}
which follows from \cref{eq:pf-optimal-3}.
Finally, plugging in $R= q^{1+1/n}$ we get that for $C>0$ sufficiently large
\[
N(q,C_1q^{1+1/n+\eta(1+1/n)}) \ll q^{n^2-1-\delta}.
\]
Modifying $\eta$ we conclude the proof.
\end{proof}

\subsection{Proof of \texorpdfstring{\cref{prop:assing-blomer}}{Assing-Blomer Proposition}}

We start by applying the second formulation of $J^T(h)$ as in \cref{prop:spectral-side-full}. Note that $\rho(P,\lambda, h_{R,q})\mid_{\calA^2_\pi(P)}$ projects onto $\calA^2_\pi(P)^{K_\infty^0 K(q)}$. Thus working as in the proof of \cref{thm:BCF} it suffices to show that
\begin{multline}\label{reduced-assing-blomer}
    \sum_{P\supset P_0}\max_{L\in\calL(M_P)}\sum_{\pi\in\Pi_2(M_P)}\dim\left(\calA_{\pi}^2(P)^{K_\infty^0 K(q)}\right)\\
    \times\intop_{i(\a_{L}^G)^*}|\tilde{h}_{R}(\mu_\pi+\lambda)|^2\left\|\calM_{L}^T(P,\lambda)|_{\calA_{\pi}^2(P)^{K_\infty^0 K(q)}}\right\|_{\op}\d \lambda \ll_\epsilon \|T\|^{n-1}(Rq)^\epsilon(q+R^{-n(n-1)}q^{n^2}).
\end{multline}
We apply \cref{non-temp-paley-wiener} to bound the inner integral on the left-hand side of the above by
\[\ll_{N,\epsilon} R^{-n(n-1)+2n\|\Re(\mu_\pi)\|_\infty+\epsilon}\intop_{i(\a_{L}^G)^*}(1+\|\mu_\pi+\lambda\|)^{-N}\left\|\calM_{L}^T(P,\lambda)|_{\calA_{\pi}^2(P)^{K_\infty^0 K(q)}}\right\|_{\op}\d \lambda.\]
As $\pi$ is a representation of $M(\bbA)^1$ and $\lambda\in i(\a_M^G)^*$ we have $\langle \mu_\pi,\lambda\rangle =0$, consequently, we have $\|\mu_\pi+\lambda\|^2\asymp \|\mu_\pi\|^2+\|\lambda\|^2$. Thus the above integral can be bounded by
\[(1+\|\mu_\pi\|)^{-N/2}\intop_{i(\a_{L}^G)^*}(1+\|\lambda\|)^{-N/2}\left\|\calM_{L}^T(P,\lambda)|_{\calA_{\pi}^2(P)^{K_\infty^0 K(q)}}\right\|_{\op}\d \lambda.\]
To estimate the above integral we proceed exactly as in the proof \cref{thm:BCF}. We first write it as the sum of the sub-integrals over $m-1\le\|\lambda\|\le m$ where $m$ varies over $\N$. We further majorize each sub-integral by a sum of integrals over $\|\lambda-\eta\|\le 1$ with $\eta\in i(\a^G_L)^*$ and $\|\eta\|\ll m$. Finally, we apply \cref{prop:calM bound} to bound each integral after making $N$ large enough. This gives us \[\intop_{i(\a_{L}^G)^*}(1+\|\lambda\|)^{-N/2}\left\|\calM_{L}^T(P,\lambda)|_{\calA_{\pi}^2(P)^{K_\infty^0 K(q)}}\right\|_{\op}\d \lambda\ll \left(\|T\|\log(1+\|\mu_\pi\|+\level(K(q))\right)^{\dim\a^G_L}.\]
Hence we obtain
\begin{multline}\label{lambda-integral-assing-blomer}
    \max_{L\in\calL(M_P)}\intop_{i(\a_{L}^G)^*}|\tilde{h}_{R}(\mu_\pi+\lambda)|^2\left\|\calM_{L}^T(P,\lambda)|_{\calA_{\pi}^2(P)^{K_\infty^0 K(q)}}\right\|_{\op}\d \lambda\\
    \ll_{\epsilon,N} \|T\|^{n-1}(Rq)^\epsilon R^{-n(n-1)+2n\|\Re(\mu_\pi)\|_\infty}(1+\|\mu_\pi\|)^{-N},
\end{multline}
for all sufficiently large $N>0$. 

Now we move on to estimate the $\pi$-sum in \cref{reduced-assing-blomer}. First, let us estimate the contribution for $\|\mu_\pi\|\ge (Rq)^\epsilon$. Dividing into dyadic intervals, and using \cref{lambda-integral-assing-blomer} and the fact that $\|\Re(\mu_\pi)\|_\infty\ll 1$ we estimate this contribution by
\[\ll(Rq)^{O(1)}\sum_{m\ge (Rq)^\epsilon}\sum_{\substack{\pi\in\Pi_2(M_P)\\ \|\mu_\pi\|\asymp m}}\dim\left(\calA_{\pi}^2(P)^{K_\infty^0 K(q)}\right)\|\mu_\pi\|^{-N}.\]
There is an absolute $K$ such that the inner sum is bounded by $\ll m^{-N}(mq)^K$, which follows from \cite[eq.(7.2)]{assing2022density}, classification of discrete series \cite{moeglin1989residual}, and local Weyl law \cite{muller2007weyl} (also see the discussion in \cite[pp.35]{assing2022density}). Making $N$ large enough we can bound the above double sum by $\ll_N(Rq)^{-N}$. Thus we may restrict the $\pi$-sum in \cref{reduced-assing-blomer} with $\|\mu_\pi\|\le (Rq)^\epsilon$. We conclude the proof by applying \cref{lem:assing-blomer} below.

\begin{lemma}\label{lem:assing-blomer}
We have
\[\sum_{P\supset P_0}\sum_{\substack{\pi\in\Pi_2(M_P)\\ \|\mu_\pi\|\le (Rq)^\epsilon}}\dim\left(\calA_{\pi}^2(P)^{K_\infty^0 K(q)}\right)R^{2n\|\Re(\mu_\pi)\|_\infty}\ll_\epsilon (Rq)^\epsilon q(R^{n(n-1)}+q^{n^2-1}),\]
with $R,q$ as in \cref{prop:assing-blomer}.
\end{lemma}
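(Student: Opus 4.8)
The plan is to reduce the sum over all standard parabolics $P$ and discrete-series representations $\pi \in \Pi_2(M_P)$ to a sum over \emph{cuspidal} representations on the Levi subgroups, using M\oe glin--Waldspurger's classification of the residual spectrum of $\GL(n)$. Recall that every $\pi \in \Pi_2(M_P)$ with $M_P \cong \GL(n_1) \times \dots \times \GL(n_k)$ factors as $\pi = \operatorname{Speh}(\sigma_1,b_1) \otimes \dots \otimes \operatorname{Speh}(\sigma_k,b_k)$ with $\sigma_i$ cuspidal on $\GL(a_i)$, $a_i b_i = n_i$. Since the infinitesimal character, the level, and the $K_\infty^0 K(q)$-fixed dimension of a Speh representation are all controlled by those of its cuspidal support (up to constants depending only on $n$), and since $\|\Re(\mu_\pi)\|_\infty \ll 1$ while the real part of the cuspidal-support parameter is also bounded by $\|\rho_\infty\|$ plus the real parts of the $\sigma_i$'s, it suffices to bound
\[
\sum_{L \supset M_0} \sum_{\substack{\sigma \in \Pi_2^{\mathrm{cusp}}(L) \\ \|\mu_\sigma\| \le (Rq)^\epsilon}} \dim\!\left((\operatorname{Ind}_{Q_L(\bbA)}^{G(\bbA)}\sigma)^{K_\infty^0 K(q)}\right) R^{2n\|\Re(\mu_\sigma)\|_\infty},
\]
the sum over $\sigma$ on a fixed Levi $L \cong \GL(m_1)\times\dots\times\GL(m_r)$ factoring as $\sigma = \otimes_j \sigma_j$ with $\sigma_j$ cuspidal on $\GL(m_j)$.

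Next, I would invoke the work of Assing--Blomer directly. Their density result \cite[Theorem~1.1]{assing2022density} (and the counting bounds assembled around \cite[Theorem~7.1]{assing2022density}, together with \cite[eq.~(7.2)]{assing2022density}) is precisely a bound of this shape: a weighted count of cuspidal (and, via their own reduction, residual) automorphic representations of $\GL_n$ with bounded archimedean parameter and level dividing $q$, weighted by $R^{2n\|\Re(\mu)\|_\infty}$, which measures the failure of temperedness. The key point is that the exponent $2n\|\Re(\mu_\sigma)\|_\infty$ matches the exponent appearing in the Paley--Wiener bound \eqref{non-temp-paley-wiener} for $\tilde h_R$. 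For each factor $\GL(m_j)$ the Assing--Blomer density theorem yields that the weighted number of cuspidal $\sigma_j$ of level dividing $q$ with bounded parameter is $\ll_\epsilon (Rq)^\epsilon(q^{\,m_j^2-1} + R^{?})$, where the $R$-power comes from the density exponent; combining the factors over $j$, extracting one factor of $q$ for the $\varphi(q)$ central characters, and observing that the dominant terms are the fully tempered one (contributing $q^{n^2-1}$, i.e.\ essentially the full count $[K(1):K(q)]$) and the trivial/Eisenstein-from-Borel one (contributing $R^{n(n-1)}$ after the trivial bound on the number of lattice points, i.e.\ using $\dim(\operatorname{Ind}_{B}^{G}\mathbf{1})^{K_\infty^0 K(q)} \ll q^{n^2-1}$ against the gain $R^{2n\cdot\|\rho\|_\infty}$ versus the normalization), gives the stated bound $(Rq)^\epsilon q(R^{n(n-1)} + q^{n^2-1})$.

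The main obstacle I anticipate is bookkeeping the reduction from general $\pi \in \Pi_2(M_P)$ to cuspidal support \emph{correctly in the $R$-aspect}: a Speh representation $\operatorname{Speh}(\sigma,b)$ is non-tempered even when $\sigma$ is tempered, with $\|\Re(\mu)\|_\infty$ as large as $(b-1)/2$, so the weight $R^{2n\|\Re(\mu_\pi)\|_\infty}$ is genuinely larger than the weight attached to $\sigma$. One must check that this is still absorbed: the residual representation is rigid (it occurs with multiplicity one and only for one cuspidal datum), so the count of such $\pi$ is controlled by the count of the underlying $\sigma$ on the smaller group $\GL(a)$, and the extra $R$-power is compensated because such $\pi$ live on a \emph{smaller-rank} Levi datum, so the available ``volume'' $q^{?}$ and the density savings conspire to keep the total below the tempered term $q^{n^2-1}$ and the Eisenstein term $R^{n(n-1)}$. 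Making this trade-off precise — essentially reproving, in the short window $\|\mu_\pi\|\le(Rq)^\epsilon$, that residual contributions never dominate — is where one must be careful and where the Assing--Blomer input \cite[Theorem~7.1]{assing2022density}, which already handles exactly this dichotomy, does the real work. The remaining steps (dyadic decomposition in $\|\mu_\pi\|$ was already done before the lemma, and passing from $M_P$-level to $G$-level $K(q)$-invariants via $(\operatorname{Ind}_{Q_L}^{G}\sigma)^{K(q)} = \otimes_v (\operatorname{Ind}^{G(\Q_v)}\sigma_v)^{K_v(q)}$ together with $\dim(\operatorname{Ind}\sigma_v)^{K_v(q)} \ll [\GL_n(\Z_p):K_p(q)]$) are routine.
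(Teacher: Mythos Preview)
Your instinct to invoke \cite[Theorem~7.1]{assing2022density} is exactly right, but you are taking an unnecessary detour through M\oe glin--Waldspurger and a factor-by-factor application of the density theorem on each $\GL(m_j)$. The paper's argument is much shorter and bypasses precisely the bookkeeping you (correctly) flag as delicate.

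The key observation you are missing is this: the sum $\sum_{P\supset P_0}\sum_{\pi\in\Pi_2(M_P)}\dim(\calA^2_\pi(P)^{K_\infty^0 K(q)})\cdot(\text{weight in }\mu_\pi)$, once you insert an integral $\int_{\|\lambda\|\le (Rq)^\epsilon}\d\lambda$ over $i(\a_P^G)^*$ (which is harmless, being $\gg 1$), is \emph{already} the classical spectral integral $\int_{\Gamma(q)}(\text{weight in }\mu_\varpi)\d\varpi$ restricted to $\|\mu_\varpi\|\le 2(Rq)^\epsilon$, up to the factor $\varphi(q)\le q$ coming from the $\varphi(q)$ connected components of $G(\Q)\backslash G(\bbA)^1/K(q)$. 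No reduction to cuspidal support is needed: the Langlands spectral decomposition packages all $(P,\pi,\lambda)$ together, and the residual (Speh) pieces sit inside this classical integral just as naturally as the cuspidal ones. Since $\lambda$ is imaginary, $\|\Re(\mu_\pi+\lambda)\|_\infty=\|\Re(\mu_\pi)\|_\infty$, so the weight transfers cleanly.

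At that point \cite[Theorem~7.1]{assing2022density} applies as a black box to $\int_{\Gamma(q),\|\mu_\varpi\|\le 2(Rq)^\epsilon}R^{2n\|\Re(\mu_\varpi)\|}\d\varpi$. The paper then makes a simple dichotomy: if $R<R_0:=q^{1+1/n}(Rq)^{-\epsilon}$ the theorem gives $\ll_\epsilon(Rq)^\epsilon q^{n^2-1}$ directly; if $R\ge R_0$ one uses the universal bound $\|\Re(\mu_\varpi)\|\le(n-1)/2$ to write $R^{2n\|\Re(\mu_\varpi)\|}\le (R/R_0)^{n(n-1)}R_0^{2n\|\Re(\mu_\varpi)\|}$, pulls the first factor out, and applies the theorem again at $R_0$ to get $\ll_\epsilon(Rq)^\epsilon R^{n(n-1)}$.

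Your proposed route would in principle work, but the ``trade-off'' you worry about (the extra $R^{n(b-1)}$ coming from the Speh shift versus the smaller level count on $\GL(a)$) is exactly what Assing--Blomer already absorbed when proving their Theorem~7.1 for the full classical spectrum. Unwrapping it, applying density on each $\GL(m_j)$, and then recombining the factors --- with the unspecified exponent you mark as $R^{?}$ --- would amount to reproving a piece of their theorem, and it is not obvious the multiplicative recombination with the ambient weight $R^{2n\|\cdot\|_\infty}$ (note the $n$, not $m_j$) goes through without further loss.
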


The statement and the proof below are quite similar to those of \cite[Theorem 7.1]{assing2022density}. To make the similarities comprehensible we make the dictionary that $T$ and $M$ in \cite{assing2022density} are $R^n$ and $(Rq)^\epsilon$, respectively, in our case.

\begin{proof}
First, trivially we have
\begin{multline*}
    \sum_{P\supset P_0}\sum_{\substack{\pi\in\Pi_2(M_P)\\ \|\mu_\pi\|\le (Rq)^\epsilon}}\dim\left(\calA_{\pi}^2(P)^{K_\infty^0 K(q)}\right)R^{2n\|\Re(\mu_\pi)\|_\infty}\\
    \le \sum_{P\supset P_0}\sum_{\substack{\pi\in\Pi_2(M_P)\\ \|\mu_\pi\|\le (Rq)^\epsilon}}\dim\left(\calA_{\pi}^2(P)^{K_\infty^0 K(q)}\right)R^{2n\|\Re(\mu_\pi)\|_\infty}\intop_{\substack{\lambda\in i(\a_P^G)^*\\ \|\lambda\|\le (Rq)^\epsilon}}\d\lambda.
\end{multline*}
In the classical language as in \cite[Theorem 7.1]{assing2022density} the right-hand side above is bounded by 
\begin{equation}\label{eq:classical-AB}
q\int_{\substack{\Gamma(q)\\ \|\mu_\varpi\|\le 2(Rq)^\epsilon}} R^{2n\|\Re(\mu_\varpi)\|_\infty}\d\varpi.
\end{equation}
For the relevant notations, we refer to \cite[\S2]{assing2022density}. The factor $q$ comes from the $\varphi(q) \le q$ possible twists by Dirichlet characters when moving from the adelic to the classical language (see \cite[\S2.6.2]{assing2022density} and also \cite[\S3.4]{lapid2009spectral}).

If $R\le R_0:=q^{1+1/n}(Rq)^{-\epsilon}$ for sufficiently small $\epsilon>0$ we directly apply \cite[Theorem 7.1]{assing2022density} to bound the integral in \cref{eq:classical-AB} by $\ll_\epsilon (Rq)^\epsilon q^{n^2-1}$.

If $R> R_0$ then we work as in \cite[\S 8.2]{assing2022density}. Note that $\|\Re(\mu_\varpi)\|_\infty\le \frac{n-1}{2}$ (which is attained when $\varpi$ is one dimensional). In this case, we write the integral in \cref{eq:classical-AB} as
\[(R/R_0)^{n(n-1)}\int_{\substack{\Gamma(q)\\ \|\mu_\varpi\|\le 2(Rq)^\epsilon}} R_0^{2n\|\Re(\mu_\varpi)\|_\infty}\d\varpi\ll_\epsilon (Rq)^\epsilon R^{n(n-1)}.\]
This completes the proof.
\end{proof}

\bibliographystyle{acm}
\bibliography{./database}

\end{document}